\numberwithin{equation}{section}
\theoremstyle{plain}
\newtheorem{theorem}{Theorem}[section]
\newtheorem{lemma}[theorem]{Lemma}
\newtheorem{proposition}[theorem]{Proposition}
\newtheorem{corollary}[theorem]{Corollary}
\newtheorem{definition}[theorem]{Definition}
\newcommand{\bbR}{\mathbb{R}}
\begin{document}
\bibliographystyle{imsart-number}
\begin{frontmatter}
\title{Bayesian Manifold Regression}
\runtitle{Bayesian Manifold Regression}

\begin{aug}
\author{\fnms{Yun} \snm{Yang}
\thanksref{t2}
\ead[label=e1]{yy84@stat.duke.edu}}
\and
\author{\fnms{David B.} \snm{Dunson}
\thanksref{}
\ead[label=e2]{dunson@stat.duke.edu}}

\thankstext{t2}{Supported by grant ES017436 from the National Institute of Environmental Health Sciences (NIEHS) of the National Institutes of Health
(NIH).}
\runauthor{Y. Yang et al.}

\affiliation{Department of Statistical Science, Duke University\thanksmark{m1}}

\address{Department of Statistical Science\\
Duke University\\
Box 90251\\
NC 27708-0251, Durham, USA\\
\printead{e1}\\
\phantom{E-mail:\ }\printead*{e2}}
\end{aug}

\begin{abstract}
There is increasing interest in the problem of nonparametric regression with high-dimensional predictors.  When the number of predictors $D$ is large, one encounters a daunting problem in attempting to estimate a $D$-dimensional surface based on limited data.  Fortunately, in many applications, the support of the data is concentrated on a $d$-dimensional subspace with $d \ll D$.
Manifold learning attempts to estimate this subspace.  Our focus is on developing computationally tractable and theoretically supported Bayesian nonparametric regression methods in this context.  When the subspace corresponds to a locally-Euclidean compact Riemannian manifold, we show that a Gaussian process regression approach can be applied that leads to the minimax optimal adaptive rate in estimating the regression function under some conditions.  The proposed model bypasses the need to estimate the manifold, and can be implemented using standard algorithms for posterior computation in Gaussian processes.  Finite sample performance is illustrated in an example data analysis.
\end{abstract}

%\begin{keyword}[class=AMS]
%\kwd[Primary ]{60K35}
%\kwd{60K35}
%\kwd[; secondary ]{60K35}
%\end{keyword}

\begin{keyword}[class=AMS]
\kwd[Primary ]{62H30}
\kwd{62-07}
\kwd[; secondary ]{65U05}
\kwd{68T05}
\end{keyword}

\begin{keyword}
\kwd{Asymptotics}
\kwd{Contraction rates}
\kwd{Dimensionality reduction}
\kwd{Gaussian process}
\kwd{Manifold learning}
\kwd{Nonparametric Bayes}
\kwd{Subspace learning}
\end{keyword}
\end{frontmatter}

\section{Introduction}
Dimensionality reduction in nonparametric regression is of increasing interest given the routine collection of high-dimensional predictors.  Our focus is on the regression model
\begin{eqnarray}
Y_i = f( X_i ) + \epsilon_i,\quad \epsilon_i \sim N(0,\sigma^2),\quad i=1,\ldots,n, \label{eq:base}
\end{eqnarray}
where $Y_i \in \bbR$, $X_i \in \bbR^D$, $f$ is an unknown regression function, and $\epsilon_i$ is a residual having variance $\sigma^2$.  We face problems in estimating $f$ accurately due to the moderate to large number of predictors $D$.  Fortunately, in many applications, the predictors have support that is concentrated near a $d$-dimensional subspace $\mathcal{M}$.  If one can learn the mapping from the ambient space to this subspace, the dimensionality of the regression function can be reduced massively from $D$ to $d$, so that $f$ can be much more accurately estimated.

There is an increasingly vast literature on subspace learning, but there remains a lack of approaches that allow flexible non-linear dimensionality reduction, are scalable computationally to moderate to large $D$, have theoretical guarantees and provide a characterization of uncertainty. \cite{Castillo2013} directly constructed a non-stationary Gaussian process prior on a known manifold though rescaling the solutions of the heat equation. However, in many cases, the manifold is not known in advance.

With this motivation, we focus on Bayesian nonparametric regression methods that allow $\mathcal{M}$ to be an unknown Riemannian manifold.  One natural direction is to choose a prior to allow uncertainty in $\mathcal{M}$, while also placing priors on the mapping from $x_i$ to $\mathcal{M}$, the regression function relating the lower-dimensional features to the response, and the residual variance.  Some related attempts have been made in the literature.  \cite{Tokdar2010} propose a logistic Gaussian process model, which allows the conditional response density $f(y|x)$ to be unknown and changing flexibly with $x$, while reducing dimension through projection to a linear subspace.  Their approach is elegant and theoretically grounded, but does not scale efficiently as $D$ increases and is limited by the linear subspace assumption.  Also making the linear subspace assumption, \cite{Reich2011} proposed a Bayesian finite mixture model for sufficient dimension reduction.  \cite{Page2013} instead propose a method for Bayesian nonparametric learning of an affine subspace motivated by classification problems.

There is also a limited literature on Bayesian nonlinear dimensionality reduction.  Gaussian process latent variable models (GP-LVMs) \citep{Lawrence2003} were introduced as a nonlinear alternative to PCA for visualization of high-dimensional data.  \cite{Kundu2011} proposed a related approach that defines separate Gaussian process regression models for the response and each predictor, with these models incorporating shared latent variables to induce dependence.  The latent variables can be viewed as coordinates on a lower dimensional manifold, but daunting problems arise in attempting to learn the number of latent variables, the distribution of the latent variables, and the individual mapping functions while maintaining identifiability restrictions.  \cite{Chen2010} instead approximate the manifold through patching together hyperplanes.  Such mixtures of linear subspace-based methods may require a large number of subspaces to obtain an accurate approximation even when $d$ is small.

It is clear that probabilistic models for learning the manifold face daunting statistical and computational hurdles.  In this article, we take a very different approach in attempting to define a simple and computationally tractable model, which bypasses the need to estimate $\mathcal{M}$ but can exploit the lower-dimensional manifold structure when it exists.  In particular, our goal is to define an approach that obtains a minimax-optimal adaptive rate in estimating $f$, with the rate adaptive to the manifold and smoothness of the regression function.  Surprisingly, we show that this can be achieved with a simple Gaussian process prior.

Section 2 provides background and our main results. Section 3 discusses two approaches to construct intrinsic dimension adaptive estimators.  Section 4 contains a toy example and a simulation study of finite sample performance relative to competitors. Section 5 provides auxiliary results that are crucial for proving the main results. Technical proofs are deferred to Section 6. A review of necessary geometric properties and selected proofs are included in Section 7 and 8 in the appendix.

\section{Gaussian Processes on Manifolds}

% Need to introduce the model and setting along with a heuristic motivation of why it works - through figures etc - before getting to heavily into the geometry/technical notions

\subsection{Background}
%Gaussian processes (GP) are widely used as prior distributions for such as \eqref{eq:base}. We focus on GPs with squared exponential covariance. The law of the centered squared exponential GP $\{W_x:x\in \mathcal{X}\}$ is entirely determined by its covariance function,
Gaussian processes (GP) are widely used as prior distributions for unknown functions. For example, in the nonparametric regression \eqref{eq:base}, a GP can be specified as a prior for the unknown function $f$. In classification, the conditional distribution of the binary response $Y_i$ is related to the predictor $X_i$ through a known link function $h$ and a regression function $f$ as $Y_i|X_i\sim Ber\big[h\{f(X_i)\}\big]$, where $f$ is again given a GP prior. The following developments will mainly focus on the regression case.
The GP with squared exponential covariance is a commonly used prior in the literature.
The law of the centered squared exponential GP $\{W_x:x\in \mathcal{X}\}$ is entirely determined by its covariance function,
\begin{align}\label{eq:cov}
K^a(x,y)=EW_xW_y=\exp(-a^2||x-y||^2),
\end{align}
where the predictor domain $\mathcal{X}$ is a subset of $\bbR^D$, $||\cdot||$ is the usual Euclidean norm and $a$ is a length scale parameter. Although we focus on the squared exponential case, our results can be extended to a broader class of covariance functions with exponentially decaying spectral density, including standard choices such as Mat\'ern, with some elaboration.
We use $GP(m,K)$ to denote a GP with mean $m:\mathcal{X}\rightarrow \bbR$ and covariance $K: \mathcal{X}\times\mathcal{X}\rightarrow \bbR$.

Given $n$ independent observations, the minimax rate of estimating a $D$-variate function that is only known to be H\"{o}lder $s$-smooth is $n^{-s/(2s+D)}$ \citep{Stone1982}. \cite{Van2009} proved that, for H\"{o}lder $s$-smooth functions, a prior specified as
\begin{align}\label{eq:prior}
W^A|A\sim GP(0,K^A),\quad A^D\sim Ga(a_0,b_0),
\end{align}
for $Ga(a_0,b_0)$ the Gamma distribution with pdf $p(t)\propto t^{a_0-1}e^{-b_0t}$ leads to the minimax rate $n^{-s/(2s+D)}$ up to a logarithmic factor $(\log n)^{\beta}$ with $\beta\sim D$ adaptively over all $s>0$ without knowing $s$ in advance. The superscript in $W^A$ indicates the dependence on $A$, which can be viewed as a scaling or inverse bandwidth parameter.

\begin{figure}[tb]
  % Requires \usepackage{graphicx}
  \center
  \includegraphics[width=4.5in]{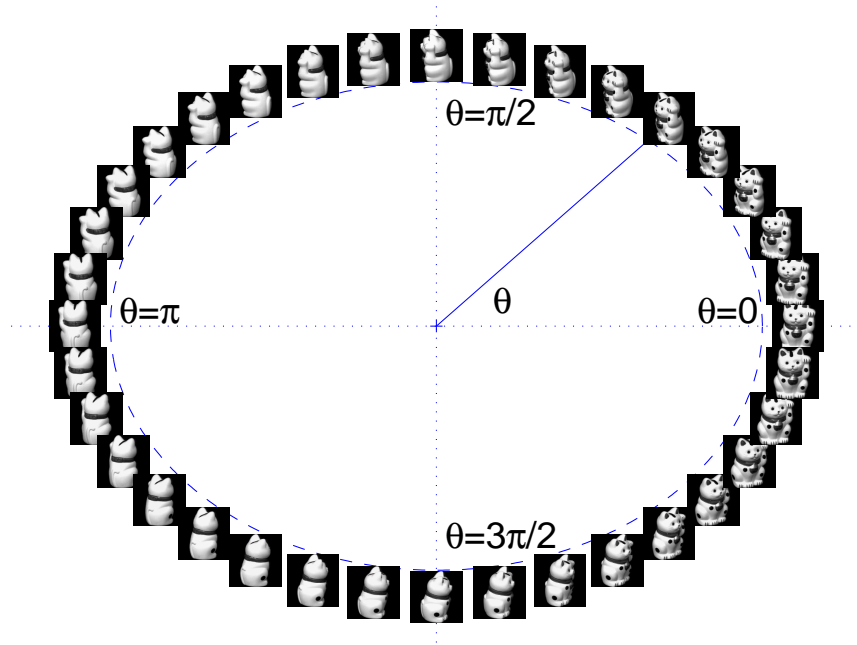}\\
  \caption{In this data, $72$ size $128\times128$ images were taken for a ``lucky cat'' from different angles: one at every $5$ degrees of rotation. $36$ images are displayed in this figure.}\label{fig:data}
\end{figure}

In many real problems, the predictor $X$ can be represented as a vector in high dimensional Euclidean space $\bbR^D$, where $D$ is called the ambient dimensionality.
When $D$ is large, assumptions are required to conquer the notorious curse of dimensionality. One common assumption requires that $f$ only depends on a small number $d\ll n$ of components of the vector $X$ that are identified as important. In the GP prior framework, \cite{Savitsky2011} proposed to use ``spike and slab'' type point mass mixture priors for different scaling parameters for each component of $X$ to do Bayesian variable selection.  Assuming the function is flat in all but $d$ directions, \cite{Anirban2011} showed that a dimension-specific scaling prior for inverse bandwidth parameters can lead to a near minimax rate for anisotropic smooth functions. We instead assume that the predictor lies on a manifold $\mathcal{M}$ of intrinsic dimension $d$ with $d\ll D$. An example is shown in Fig.\ref{fig:data}. These data (\cite{Nene1996}) consist of $72$ images of a ``lucky cat'' taken from different angles $5^{\circ},10^{\circ},\ldots$. The predictor $X\in\bbR^{128^2}$ is obtained by vectorizing the $128\times128$ image. The response $Y$ is a continuous function $f$ of the rotation angle $\theta\in[0,2\pi]$ satisfying $f(0)=f(2\pi)$, such as $sin$ or $cos$ functions. Intuitively, the predictor $X$ concentrates on a circle in $D=128^2$-dim ambient space and thus the intrinsic dimension $d$ of $X$ is equal to one, the dimension of the rotation angle $\theta$.

\subsection{Bayesian regression on manifold}
When $X\in\mathcal{M}$ with the manifold $\mathcal{M}$ $d$-dimensional, a natural question is whether we can achieve the intrinsic rate $n^{-s/(2s+d)}$ for $f$ H\"older $s$-smooth without estimating $\mathcal{M}$.

\citep{Kpotufe2009} and \cite{kpotufe2012} used random projection trees to partition the ambient space and constructed a piecewise constant estimator based on the partition. The authors showed that their estimator has a convergence rate at least $n^{-1/(2+k)}$ for Lipschitz continuous functions that is adaptive to the intrinsic dimension $d$, where $k$ is guaranteed to be of order $O(d\log d)$. A more general framework is considered in \cite{Binev2005} and \cite{Binev2007}, which covers the case where covariates lie on a low dimensional manifold in $\bbR^D$. They studied partition-based estimators and proved an $n^{-r/(2r+1)}$ rate, where $r$ depends on how well the truth $f$ can be approximated by their class. However, it is not clear whether their class of piecewise polynomial functions in $\bbR^D$ can adapt to the manifold structure.

\cite{Ye2008} showed that a least squares regularized algorithm with an appropriate $d$ dependent regularization parameter can ensure a convergence rate at least $n^{-s/(8s+4d)}(\log n)^{2s/(8s+4d)}$ for functions with H\"{o}lder smoothness $s\leq1$. \cite{Bickel2007} proved that local polynomial regression with bandwidth dependent on $d$ can attain the minimax rate $n^{-s/(2s+d)}$ for functions with H\"{o}lder smoothness $s\leq2$.
However, similar adaptive properties have not been established for a Bayesian procedure. In this paper, we will prove that a GP prior on the regression function with a proper prior for the scaling parameter can lead to the minimax rate for functions with H\"{o}lder smoothness $s\leq\{2,\gamma-1\}$, where $\gamma$ is the smoothness of the manifold $\mathcal{M}$. Moreover, we describe two approaches to construct an intrinsic dimension adaptive estimator based on this GP prior in Section \ref{se:in}.
In the remainder of this section, we first propose the model, and then provide a heuristic argument explaining the possibility of manifold adaptivity.

Analogous to \eqref{eq:prior}, we propose the prior for the regression function $f$ as
\begin{align}\label{eq:prior1}
W^A|A\sim GP(0,K^A),\quad A^d\sim Ga(a_0,b_0),
\end{align}
where $d$ is the intrinsic dimension of the manifold $\mathcal{M}$ and $K^a$ is defined as in \eqref{eq:cov} with $||\cdot||$ the Euclidean norm of the ambient space $\bbR^D$. Adaptation to unknown intrinsic dimensionality is considered in Section \ref{se:in}.  Although the GP in \eqref{eq:prior1} is specified through embedding in the $\bbR^D$ ambient space, we essentially obtain a GP on $\mathcal{M}$ if we view the covariance function $K^a$ as a bivariate function defined on $\mathcal{M}\times\mathcal{M}$. Moreover, this prior has two major differences with usual GPs or GP with Bayesian variable selection:
\begin{enumerate}\itemsep=-1pt
  \item Unlike GP with Bayesian variable selection, all predictors are used in the calculation of the covariance function $K^a$;
  \item The dimension $D$ in the prior for inverse bandwidth $A$ is replaced with the intrinsic dimension $d$.
\end{enumerate}

Intuitively, one would expect that geodesic distance should be used in the squared exponential covariance function \eqref{eq:cov}. However, there are two main advantages of using Euclidean distance instead of geodesic distance. First, when geodesic distance is used, the covariance function may fail to be positive definite. In contrast, with Euclidean distance in \eqref{eq:cov}, $K^a$ is ensured to be positive definite. Second, for a given manifold $\mathcal{M}$, the geodesic distance can be specified in many ways through different Riemannian metrics on $\mathcal{M}$. However different geodesic distances are equivalent to each other and to the Euclidean distance on $\bbR^D$. Therefore, by using the Euclidean distance, we bypass the need to estimate geodesic distance, but still reflect the geometric structure of the observed predictors in terms of pairwise distances. In addition, although we use the full data in the calculation of the covariance function, computation is still fast for moderate sample sizes $n$ regardless of the size of $D$ since only pairwise Euclidean distances among $D$-dimensional predictors are involved whose computational complexity scales linearly in $D$.

In this work, we primarily focus on compact manifolds without boundary. The study of manifolds with boundaries is beyond the scope of our current work. A difference occurs because any boundary of a manifold has a smaller dimensionality than the intrinsic dimension of the manifold. As a consequence, in order to achieve optimal rate on boundaries, we may need to consider non-stationary Gaussian process priors, where the length scale parameter $A$ varies on the manifold. However, if we stick to the prior \eqref{eq:prior1}, then we conjecture that the rate is still optimal in the interior, but suboptimal on the boundaries of the manifold.

We provide some heuristic explanations on why the rate can adapt to the predictor manifold. Although the ambient space is $\bbR^D$, the support $\mathcal{M}$ of the predictor $X$ is a $d$ dimension submanifold of $\bbR^{D}$. As a result, the GP prior specified in section 2.1 has all probability mass on the functions supported on this support, leading the posterior contraction rate to entirely depend on the evaluations of $f$ on $\mathcal{M}$. More specifically, the posterior contraction rate is at least $\epsilon_n$ if
\begin{align*}
    \Pi\big(\|f-f_0\|_n>\epsilon_n|S_n\big)\rightarrow0, \ \text{in probability as }  n\rightarrow\infty,
\end{align*}
where $S_n=\{(X_1,Y_1),\ldots,(X_n,Y_n)\}$ denotes the dataset, $\Pi(A|S_n)$ is the posterior of $A$ and $\|f-f_0\|_n$ is the empirical norm defined through $\|f-f_0\|_n^2=(1/n)\sum_{i=1}^n$
$\big(f(x_i)-f_0(x_i)\big)^2$. Hence, $\|f-f_0\|_n$ measures the discrepancy between $f$ and the truth $f_0$, and only depends on the evaluation of $f$ on $\mathcal{M}$. Therefore, in a prediction perspective, we only need to fit and infer $f$ on $\mathcal{M}$. Intuitively, we can consider a special case when the points on manifold $\mathcal{M}$ have a global smooth representation $x=\phi(t)$, where $t\in\bbR^d$ is the global latent coordinate of $x$. Then the regression function
\begin{align}\label{eq:para}
 f(x)=f\big[\phi(t)\big]\triangleq h(t),\ \ t\in\bbR^d,
\end{align}
is essentially a $d$-variate $s$-smooth function if $\phi$ is sufficiently smooth. Then estimation of $f$ on $\bbR^D$ boils down to estimation of $h$ on $\bbR^d$ and the intrinsic rate would be attainable.

\subsection{Convergence rate under fixed design}
The following theorem is our main result which provides posterior convergence rate under fixed design.

\begin{theorem}\label{thm:main1}
Assume that $\mathcal{M}$ is a $d$-dimensional compact $C^{\gamma}$ submanifold of $R^{D}$. For any $f_0\in C^{s}(\mathcal{M})$ with $s\leq \min\{2,\gamma-1\}$, if we specify the prior as \eqref{eq:prior}, then (\ref{eq:0c}) below will be satisfied for
$\epsilon_n$ a multiple of $n^{-s/(2s+d)}(\log n)^{\kappa_1}$ and $\bar{\epsilon}_n$ a multiple of $\epsilon_n(\log n)^{\kappa_2}$ with $\kappa_1=(1+d)/(2+d/s)$ and $\kappa_2=(1+d)/2$. This implies that the posterior contraction rate with respect to $\|\cdot\|_n$ will be at least a multiple of $n^{-s/(2s+d)}(\log n)^{d+1}$.
\end{theorem}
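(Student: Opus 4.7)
The plan is to invoke the general Gaussian-prior posterior contraction theorem in the spirit of van der Vaart and van Zanten, a version tailored to the present setting being provided as an auxiliary result in Section~5. This reduces the problem to (i) bounding the concentration function
\[
\phi^{a}_{f_0}(\epsilon) \;=\; \inf_{h\in\mathbb{H}^{a}:\,\|h-f_0\|_{\infty,\mathcal{M}}\le\epsilon}\tfrac12\|h\|_{\mathbb{H}^{a}}^{2} \;-\;\log\Pi\bigl(\|W^{a}\|_{\infty,\mathcal{M}}<\epsilon\bigr)
\]
at an optimally chosen $a=a_n$, where $\mathbb{H}^{a}$ denotes the RKHS of $K^{a}$; (ii) verifying that the Gamma prior on $A^{d}$ places enough mass near $a_n^{d}$; and (iii) producing a sieve of controlled sup-norm entropy whose complement has exponentially small prior mass. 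All sup-norms are taken over $\mathcal{M}$, since the empirical norm in the conclusion depends only on function values there, so the geometry enters only through how $\mathcal{M}$ sits in $\bbR^{D}$ as a metric subspace.

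For the centered small-ball factor I would use that the intrinsic metric of the rescaled squared-exponential process is equivalent to $a\|\cdot\|$ at small scales on $\mathcal{M}$, and that a compact $d$-dimensional $C^{\gamma}$ submanifold satisfies $N(\delta,\mathcal{M},\|\cdot\|)\lesssim\delta^{-d}$. Feeding this entropy estimate into the Kuelbs--Li / Li--Linde duality between small-ball probabilities and metric entropy yields
\[
-\log\Pi\bigl(\|W^{a}\|_{\infty,\mathcal{M}}<\epsilon\bigr)\;\lesssim\;a^{d}\bigl(\log(a/\epsilon)\bigr)^{1+d/2}.
\]
The exponent $d$ in this bound, rather than the $D$ one would obtain on the full $\bbR^{D}$, is the single place where the intrinsic dimension enters the analysis of the process itself, and it is precisely matched by the choice of exponent $d$ in the prior on $A^{d}$.

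For the approximation half of $\phi^{a}_{f_0}$, I would first extend $f_0\in C^{s}(\mathcal{M})$ to a compactly supported $\tilde f_0\in C^{s}(\bbR^{D})$ by pulling back along the nearest-point retraction onto a tubular neighborhood of $\mathcal{M}$ and smoothly cutting off; controlling the $C^{s}$-H\"{o}lder norm of $\tilde f_0$ through the chain rule applied to this retraction demands $\gamma\ge s+1$, which is the source of the second hypothesis on $s$. I would then take $h_a$ to be the convolution of $\tilde f_0$ with a Gaussian density of bandwidth of order $1/a$. The Fourier characterisation of the squared-exponential RKHS gives $\|h_a\|_{\mathbb{H}^{a}}^{2}\lesssim\|\tilde f_0\|_{2}^{2}=O(1)$ uniformly in $a$, while a Taylor expansion of $\tilde f_0$ combined with the vanishing first moment of the Gaussian yields $\|h_a-\tilde f_0\|_{\infty}\lesssim a^{-s}$, and a fortiori $\|h_a-f_0\|_{\infty,\mathcal{M}}\lesssim a^{-s}$; the non-vanishing second moment of the Gaussian is what blocks this argument beyond $s=2$ and forces the first side-condition.

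Combining the two halves, $\phi^{a}_{f_0}(\epsilon)\lesssim a^{d}(\log a)^{1+d/2}$ whenever $\epsilon\ge Ca^{-s}$, and balancing $n\epsilon_n^{2}\asymp a_n^{d}(\log a_n)^{1+d/2}$ with $\epsilon_n\asymp a_n^{-s}$ produces $a_n\asymp n^{1/(2s+d)}$ and $\epsilon_n$ of the stated form. The prior-mass condition is then immediate from positivity of the Gamma density at $a_n^{d}$. A sieve of the form $\{W^{A}:A\in[0,\bar a_n],\,W^{A}\in r_n\mathbb{H}_{1}^{A}+\bar\epsilon_n U_1\}$, with $\mathbb{H}_{1}^{A}$ the RKHS unit ball and $U_1$ the sup-norm unit ball, has entropy controlled by the $d$-dimensional covering of $\mathcal{M}$; its complement has exponentially small prior mass by Borell's inequality together with the Gamma upper tail, the latter decaying at exactly the required rate because the prior is expressed in the intrinsic dimension $d$. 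The main technical obstacle, beyond invoking the standard van der Vaart--van Zanten machinery, lies in the two geometric steps: the tubular-neighborhood extension of $f_0$ from $\mathcal{M}$ to $\bbR^{D}$, and the passage from the $d$-dimensional Euclidean metric entropy of $\mathcal{M}$ to the $a^{d}$-scaling in the small-ball bound for an ambient-space Gaussian process restricted to $\mathcal{M}$.
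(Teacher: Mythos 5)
Your small-ball estimate is on the right track: the bound $-\log\Pi(\|W^a\|_{\infty,\mathcal{M}}<\epsilon)\lesssim a^d(\log(a/\epsilon))^{\text{power}}$, with the $a^d$ scaling coming from a $d$-dimensional covering of $\mathcal{M}$, is exactly the mechanism the paper uses (Lemma~\ref{le:3a}--\ref{le:4a}, via piecewise transformed polynomials in normal coordinates). Incidentally the correct power of the logarithm is $1+d$, not $1+d/2$; this matches the $(\log n)^{d+1}$ in the conclusion and is what van der Vaart--van Zanten obtain in the Euclidean case.

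The genuine gap is in the decentering half. You extend $f_0$ to a compactly supported $\tilde f_0\in C^s(\bbR^D)$ via a tubular neighborhood and take $h_a=\tilde f_0*g_{1/a}$ (Euclidean convolution in $\bbR^D$), claiming $\|h_a\|_{\mathbb{H}^a}^2\lesssim\|\tilde f_0\|_2^2=O(1)$ uniformly in $a$. That norm bound is false. The spectral measure $\mu_a$ of $K^a$ has density $\propto a^{-D}e^{-\|\lambda\|^2/(4a^2)}$, and the Fourier characterization (Lemma~\ref{le:2a}) gives $\|h_a\|_{\mathbb{H}^a}^2 \asymp a^{D}\int|\widehat{\tilde f_0}(\lambda)|^2\,e^{-\|\lambda\|^2/(4a^2)}\,d\lambda \lesssim a^{D}\|\tilde f_0\|_2^2$. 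This is precisely the $a^D$ bound of van der Vaart--van Zanten's Lemma~4.4, which leads to the \emph{ambient}-dimension rate $n^{-s/(2s+D)}$. With your decentering of order $a^D$ and small-ball of order $a^d$, the balancing $n\epsilon_n^2\asymp\phi^{a_n}_{f_0}(\epsilon_n)$ is dominated by $a_n^D$, giving $a_n\asymp n^{1/(2s+D)}$ and $\epsilon_n\asymp n^{-s/(2s+D)}$ --- not the intrinsic rate. Passing to the manifold RKHS $\tilde{\mathbb{H}}^a$ via the restriction Lemma~\ref{le:1a} only gives $\|h_a|_{\mathcal{M}}\|_{\tilde{\mathbb{H}}^a}\le\|h_a\|_{\mathbb{H}^a}$, which does not improve the power of $a$.

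What the paper does to fix exactly this is to convolve $f_0$ against $K^a$ \emph{on the manifold}, with respect to the $d$-dimensional Riemannian volume form:
\begin{align*}
I_a(f_0)(x)=\Bigl(\tfrac{a}{\sqrt{2\pi}}\Bigr)^{d}\int_{\mathcal{M}}K^a(x,y)\,f_0(y)\,dV(y),
\end{align*}
rather than extending to $\bbR^D$. Because the integral is over the $d$-dimensional manifold and $I_a(f_0)$ is a superposition of kernel sections $K^a(\cdot,y)$, a direct use of the reproducing property yields $\|I_a(f_0)\|_{\tilde{\mathbb{H}}^a}^2 \le B\,a^{d}$, the crucial intrinsic-dimension scaling (Lemma~\ref{le:7}). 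Controlling the sup-norm approximation error $\lesssim a^{-s}$ then requires passing to normal coordinates via the exponential map, accounting for the Jacobian of the metric and the discrepancy between geodesic and Euclidean distance; the curvature terms are what cap the usable smoothness at $s\le 2$, and the $\gamma$-dependent Taylor remainder of the coordinate charts is what produces the $s\le\gamma-1$ restriction (through the entropy Lemma~\ref{le:3a} rather than through your extension argument). Your proposal correctly identifies where the intrinsic dimension must enter and gets the ambient geometry right on the small-ball side, but it misses this manifold-convolution construction, and without it the decentering bound is off by a factor $a^{D-d}$, which is fatal.
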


The ambient space dimension $D$ implicitly influences the rate via a multiplicative constant. This theorem suggests that the Bayesian model \eqref{eq:prior1} can adapt to both the low dimensional manifold structure of $X$ and the smoothness $s\leq 2$ of the regression function. The reason the near optimal rate can only be allowed for functions with smoothness $s\leq2$ is the order of error in approximating the intrinsic distance $d_{\mathcal{M}}$ by the Euclidean distance $d$ (Proposition 7.5).

Generally, the intrinsic dimension $d$ is unknown and needs to be estimated.
In the case when the intrinsic dimensionality $d$ is
misspecified as $d'$, the following corollary still ensures the rate to be much better than $n^{-O(1/D)}$ when $d'$ is not too small, although the rate becomes suboptimal.
\begin{corollary}\label{thm:main1b}
Assume the same conditions as in Theorem \ref{thm:main1}, but with the prior specified as \eqref{eq:prior} with $d'\neq d$ and $d'>d^2/(2s+d)$.
\begin{enumerate}\itemsep=-1pt
  \item If $d'>d$, then the posterior contraction rate with respect to $\|\cdot\|_n$ will be at least a multiple of $n^{-s/(2s+d')}(\log n)^{\kappa}$, where
      $\kappa=(1+d)/(2+d'/s)$;
  \item If $\frac{d^2}{2s+d}<d'<d$, then the posterior contraction rate with respect to $\|\cdot\|_n$ will be at least a multiple of $n^{-\frac{(2s+d)d'-d^2}{2(2s+d)d'}}(\log n)^{\kappa}$, where
      $\kappa=(d+d^2)/(2d'+dd'/s)+(1+d)/2$.
\end{enumerate}
\end{corollary}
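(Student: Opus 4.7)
The plan is to reuse the posterior contraction argument behind Theorem \ref{thm:main1}, keeping the geometric inputs unchanged and only tracking how the modified prior $A^{d'}\sim Ga(a_0,b_0)$ propagates. Three ingredients carry over intact: (i) the concentration function bound $\phi^a_{f_0}(\epsilon)\lesssim a^d(\log(a/\epsilon))^{1+d}$ for every $a\ge c\epsilon^{-1/s}$, from Proposition 7.5 together with the intrinsic-dimensional small-ball estimate for squared-exponential GPs restricted to $\mathcal{M}$; (ii) the entropy bound $\log N(\eta,\{W^a:a\le M\},\|\cdot\|_\infty)\lesssim M^d(\log(M/\eta))^{1+d}$; and (iii) the Ghosal--van der Vaart prior-mass/sieve framework. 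The only change is that the tail of $A$ now gives $-\log P(A\in[a,2a])\lesssim a^{d'}$ and $P(A>M)\lesssim e^{-b_0 M^{d'}}$.

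Optimizing over $a\sim\epsilon^{-1/s}$ in the concentration function gives
\[
-\log\Pi(\|W-f_0\|_\infty<\epsilon)\ \lesssim\ \epsilon^{-d/s}(\log(1/\epsilon))^{1+d}+\epsilon^{-d'/s}.
\]
The prior-mass rate $\epsilon_n$ is read off by requiring this to be $\le n\epsilon_n^2$, and $\bar\epsilon_n$ is then pinned down by the sieve entropy. In Case 1 ($d'>d$) the second term on the right dominates, so $\epsilon_n$ scales like $n^{-s/(2s+d')}$, with the stated log factor $(\log n)^{(1+d)/(2+d'/s)}$ appearing when one simultaneously fits the first term into the same budget; the sieve condition $\Pi(A>M_n)\le e^{-Cn\epsilon_n^2}$ forces $M_n\sim n^{1/(2s+d')}$, and because $d'>d$ the entropy on this sieve stays well below $n\epsilon_n^2$, so $\bar\epsilon_n$ matches $\epsilon_n$ up to constants.

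In Case 2 ($d^2/(2s+d)<d'<d$) the first term in the concentration function dominates, so $\epsilon_n$ is unchanged from Theorem \ref{thm:main1}; the binding constraint shifts to the sieve. The heavier tail of $A$ now forces $M_n\sim(n\epsilon_n^2)^{1/d'}$, which is polynomially larger than the $n^{1/(2s+d)}$ of the well-specified case, and the entropy $M_n^d(\log n)^{1+d}$ exceeds $n\epsilon_n^2$. Setting $n\bar\epsilon_n^2\ge M_n^d(\log n)^{1+d}$ and inserting $M_n=(n\epsilon_n^2)^{1/d'}$ yields $\bar\epsilon_n^2\gtrsim n^{(d^2-d'(2s+d))/((2s+d)d')}$ up to logs; the exponent of $n$ is negative exactly when $d'>d^2/(2s+d)$, producing the stated rate. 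The log exponent $\kappa$ emerges as $d(1+d)/(d'(2+d/s))$ from the $(\log n)^{2s(1+d)/(2s+d)}$ inside $\epsilon_n^2$ being raised to the power $d/d'$ through $M_n^d$ and then halved, plus $(1+d)/2$ from the standard square root in passing from $n\bar\epsilon_n^2\gtrsim\log N$ to $\bar\epsilon_n$.

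The main obstacle is the log-factor bookkeeping in Case 2: one has to track carefully how $(\log n)^{s(1+d)/(2s+d)}$ inside $\epsilon_n$ combines with the intrinsic $(\log(M_n/\bar\epsilon_n))^{1+d}$ of the entropy bound through the substitution $M_n\sim(n\epsilon_n^2)^{1/d'}$, and verify that the sum equals the stated $\kappa$. Two sanity checks pin down the formula: at $d'=d$ it recovers $-s/(2s+d)$ and $\kappa_1+\kappa_2$ of Theorem \ref{thm:main1}, while at the threshold $d'=d^2/(2s+d)$ the exponent of $n$ vanishes, marking the boundary beyond which consistency fails.
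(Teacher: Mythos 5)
Your proposal is correct and follows essentially the paper's route: re-verify the prior-mass, sieve-complement, and entropy conditions with the modified tail of $A$, noting that the decentering, small-ball, and entropy estimates carry the true $d$ while the $A$-tail carries $d'$, so the prior mass governs the rate when $d'>d$ and the inflated sieve radius $r_n\sim(n\epsilon_n^2)^{1/d'}$ drives the entropy cost when $d'<d$; your Case~2 log-exponent bookkeeping agrees with the paper's. One minor note on Case~1: the stated $\kappa=(1+d)/(2+d'/s)$ arises because the paper's proof consolidates the prior-mass negative log as $\lesssim\epsilon^{-d'/s}(\log(1/\epsilon))^{1+d}$, whereas under your tighter two-term bound $\epsilon^{-d/s}(\log(1/\epsilon))^{1+d}+\epsilon^{-d'/s}$ the decentering term is already polynomially dominated for $d'>d$ and the log exponent in $\epsilon_n$ could be taken to zero --- so the log factor is not produced, as you describe, by fitting the first term into the budget, and your argument in fact yields a slightly sharper bound than the one stated.
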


\subsection{Convergence rate under random design}\label{se:rd}

Theorem \ref{thm:main1} obtains the posterior contraction rate for fixed design.
In general, convergence rate in a random designed case is more challenging.
\cite{van2011} obtains posterior convergence rates for regression on Euclidean space $\bbR^d$ using GP priors. However, they require $s\geq d/2$ for estimating an $s$-smooth function. This assumption restricts the applicability of Theorem \ref{thm:main1} as it assumes $s\leq 2$. \cite{van2011} also makes a crucial assumption that the prior puts all its mass over $s$-smooth function spaces, leading to suboptimal rates of estimating non-analytic functions with the squared exponential covariance kernel.

Instead of directly proving that Theorem \ref{thm:main1} works for the random design case, we take a different approach by post-processing the posterior.
We show that the resulting posterior can achieve the same rate $\bar{\epsilon}_n$ with respect to $\|\cdot\|_2$, and the corresponding Bayes estimator $\hat{f}$ satisfies $\|\hat{f}-f_0\|_{2}\lesssim \bar{\epsilon}_n$ with high probability. Here $\|f\|_2 \triangleq\int_{\mathcal{M}}f^2(x)G(dx)$, with $G$ the marginal distribution for predictor $X$. Usual empirical process theory (Lemma 6.2, or \cite{geer2000}) requires the function space to be uniformly bounded in order for the empirical norm $\|\cdot\|_n$ to be comparable to the $L_2$ norm $\|\cdot\|_2$ uniformly over the space. This motivates us to truncate the functions sampled from the posterior distribution. The idea of post-processing a posterior has also been considered in \cite{Lin2014} for Bayesian monotone regression, which projects posterior samples to the monotone function space.

Let $A$ be any upper bound for $\|f_0\|_{\infty}$. For any function $f$, denote its truncation by $A$ as $f_A=(f\vee(-A))\wedge A$. Then our post-processed posterior is the posterior of $f_A$ and the corresponding estimator $\hat{f}$ is given by $\hat{f}(x)=\int f_A(x)d\Pi(f|S_n)$, which is the posterior expectation of $f_A$. In practice, $\hat{f}$ can be easily obtained by taking average of $\{f^{(j)}_A:j=1,\ldots,N\}$ where $\{f^{(j)}:j=1,\ldots,N\}$ are sampled from the posterior distribution of $f$.
The reason for truncating $f$ in the posterior is two-fold. For practical purposes, truncation will never deteriorate an estimator, i.e. $|f_A(x)-f_0(x)|\leq |f(x)-f_0(x)|$ for all $x$ as long as $A\geq\|f_0\|_{\infty}$. For theoretical purposes, we require the estimator to be bounded in order to compare $\|\cdot\|_n$ and $\|\cdot\|_2$ by applying results in empirical process theory.

The following theorem shows that the truncated GP posterior contracts to $f_0$ at a near minimax-optimal rate with respect to both $\|\cdot\|_n$ and $\|\cdot\|_2$. Moreover, the corresponding estiamtor $\hat{f}$ is near minimax-optimal under both fixed design and random design. A proof is provided in Section \ref{se:tp}.

\begin{theorem}\label{le:BEr}
Assume the same conditions as in Theorem \eqref{thm:main1}, then
\begin{align*}
    \Pi\big(\max\{\|f_A-f_0\|_n,\|f_A-f_0\|_2>\epsilon_n\big|S_n\big)\rightarrow0, \ \text{in probability as }  n\rightarrow\infty.
\end{align*}
Moreover with probability tending to one, the following holds:
\begin{align*}
    \max\big\{\|\hat{f}-f_0\|_n,\|\hat{f}-f_0\|_2\big\}\leq C\bar{\epsilon}_n\leq n^{-s/(2s+d)}(\log n)^{d+1},
\end{align*}
with $\bar{\epsilon}_n$ given in Theorem \eqref{thm:main1} and $C$ a positive constant.
\end{theorem}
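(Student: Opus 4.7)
The plan is to combine Theorem \ref{thm:main1} with two additional ingredients: an empirical-process comparison between $\|\cdot\|_n$ and $\|\cdot\|_2$ on the uniformly bounded class of truncated functions, and a Jensen-type argument that transfers the in-posterior convergence to a deterministic bound on the Bayes estimator $\hat f$. For $\|\cdot\|_n$ convergence of $f_A$ to $f_0$, nothing new is needed: since $A \geq \|f_0\|_\infty$, pointwise truncation is a non-expansive projection toward $f_0$, giving $|f_A(x)-f_0(x)|\leq|f(x)-f_0(x)|$ for every $x$, so $\|f_A-f_0\|_n\leq\|f-f_0\|_n$ and Theorem \ref{thm:main1} yields $\Pi(\|f_A-f_0\|_n>\bar\epsilon_n\mid S_n)\to 0$ in probability. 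For $\|\cdot\|_2$ convergence, the difference $f_A-f_0$ always lies in the uniformly bounded class $\mathcal{F}=\{g:\mathcal{M}\to\bbR,\;\|g\|_\infty\leq 2A\}$; the empirical-process bounds invoked as Lemma 6.2 in the paper (cf.\ also \cite{geer2000}) give a uniform comparison of the form $\|g\|_2\leq C\|g\|_n+C'\delta_n$ valid for all $g\in\mathcal{F}$ on an event of probability tending to one, with $\delta_n=o(\bar\epsilon_n)$, and combining this with the first step yields $\Pi(\|f_A-f_0\|_2>C\bar\epsilon_n\mid S_n)\to 0$ in probability, establishing the first display of the theorem.

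For the bound on $\hat f$, Jensen's inequality applied pointwise to $t\mapsto t^2$ shows that for any probability measure $\mu$ on $\mathcal{M}$,
\[
\|\hat f-f_0\|_\mu^2 \;=\; \int \Big(\int (f_A(x)-f_0(x))\,d\Pi(f\mid S_n)\Big)^{\!2} d\mu(x) \;\leq\; \int \|f_A-f_0\|_\mu^2\,d\Pi(f\mid S_n).
\]
Applying this with $\mu$ equal to the empirical measure and, separately, to the marginal law $G$, and splitting each integral according to whether $\|f_A-f_0\|_\mu\leq C\bar\epsilon_n$ or not, one obtains
\[
\|\hat f-f_0\|_\mu^2 \;\leq\; C^2\bar\epsilon_n^2 \,+\, 4A^2\,\Pi\big(\|f_A-f_0\|_\mu>C\bar\epsilon_n\mid S_n\big),
\]
using the uniform bound $\|f_A-f_0\|_\mu\leq 2A$.

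The main obstacle is that the bare in-probability convergence provided by Theorem \ref{thm:main1} is not quantitatively strong enough to absorb the posterior-probability term in the last display; one needs it to decay faster than $\bar\epsilon_n^2/A^2$. Fortunately, the Ghosal--Ghosh--van der Vaart machinery that underlies the proof of Theorem \ref{thm:main1} actually delivers a much sharper conclusion of the form $\Pi(\|f-f_0\|_n>M\bar\epsilon_n\mid S_n)\leq e^{-cn\bar\epsilon_n^2}$ on an event of probability tending to one, once $M$ is chosen large enough; since $n\bar\epsilon_n^2\to\infty$, this decay dominates the $A^2$ factor. Extracting this sharper tail from the prior-mass and entropy inequalities already used in the proof of Theorem \ref{thm:main1}, and propagating it to the truncated class via the non-expansiveness of $f\mapsto f_A$, is the crux of the argument; the remainder is routine bookkeeping to combine the bounds for the two choices of $\mu$ and take the maximum.
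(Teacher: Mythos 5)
Your overall plan matches the paper's: exploit the non-expansiveness of truncation for the empirical-norm part, use an empirical-process lemma to compare $\|\cdot\|_n$ and $\|\cdot\|_2$, and pass to $\hat f$ via Jensen combined with a sharper exponential posterior tail bound. You also correctly identify that the bare ``in probability'' conclusion of Theorem~2.1 is not quantitatively strong enough and that the exponential tail $\Pi(\|f-f_0\|_n>t\bar\epsilon_n\mid S_n)\leq e^{-c n\epsilon_n^2 t^2}$ must be extracted from the Ghosal--Ghosh--van der Vaart machinery; the paper does exactly this (summing over $t=1,2,\ldots$ to get a sequence of data events of probability tending to one on which the tail holds uniformly in $t$).

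However, there is a concrete gap in your invocation of the empirical-process comparison. You apply it to the whole class $\mathcal{F}=\{g:\|g\|_\infty\leq 2A\}$, but the comparison lemma (Lemma~6.2, \cite[Lemma~5.16]{geer2000}) requires a bracketing-entropy control of the form $n\delta^2\geq H_B(\delta,\mathcal{F},\|\cdot\|_2)$, and this fails for the full sup-norm ball: the class of all functions on $\mathcal{M}$ bounded by $2A$ has \emph{infinite} $\|\cdot\|_2$-bracketing entropy at every positive scale, so no $\delta\to 0$ can satisfy the hypothesis. What makes the argument work in the paper is that the comparison is applied not on the full bounded class but on the \emph{truncated sieves} $B_{n,A}=\{f_A:f\in B_n\}-f_0$, whose covering entropy is inherited from $B_n$ and is bounded by $n\bar\epsilon_n^2$ by construction; one then also has to control $\Pi(f_A\notin B_{n,A}\mid S_n)$ using the same exponential tail. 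Without restricting to the sieve, the claimed uniform comparison ``$\|g\|_2\leq C\|g\|_n+C'\delta_n$ for all $g\in\mathcal{F}$'' is simply false, and the second display of your first paragraph does not follow. Repair is straightforward if you replace $\mathcal{F}$ by $B_{n,A}-f_0$ and add the sieve-escape bound, which is the route the paper takes.
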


\subsection{Dimensionality Reduction}
\cite{Tenenbaum2000} and \cite{Roweis2000} initiated the area of manifold learning, which aims to design non-linear dimensionality reduction algorithms to map high dimensional data into a low dimensional feature space under the assumption that data fall on an embedded non-linear manifold within the high dimensional ambient space. A combination of manifold learning and usual nonparametric regression leads to a two-stage approach, in which a dimensionality reduction map from the original ambient space $\bbR^D$ to a feature space $\bbR^{\tilde{d}}$ is estimated in the first stage and a nonparametric regression analysis with low dimensional features as predictors is conducted in the second stage. As a byproduct of Theorem \ref{thm:main1}, we provide a theoretical justification for this two stage approach under some mild conditions.

Let $\Psi:\bbR^D\rightarrow \bbR^{\tilde{d}}$ be a dimensionality reduction map. For identifiability, we require the restriction $\Psi_{\mathcal{M}}$ of $\Psi$ on the manifold $\mathcal{M}$ to be a diffeomorphism, i.e. $\Psi_{\mathcal{M}}$ is injective and both $\Psi_{\mathcal{M}}$ and its inverse are smooth, which requires $\tilde{d}\geq d$. Diffeomorphism is the least and only requirement such that both the intrinsic dimension $d$ of predictor $X$ and smoothness $s$ of regression function $f$ are invariant. If we view $\Psi(\bbR^D)$ as the new ambient space, then the new regression function $\tilde{f}$ is induced by $f$ via
\begin{align*}
    \tilde{f}(\tilde{x})=f\big[\Psi_{\mathcal{M}}^{-1}(\tilde{x})\big],\text{ for all }\tilde{x}\in\Psi_{\mathcal{M}}(\mathcal{M}).
\end{align*}
Accordingly, the empirical norm of $\tilde{f}$ under fixed design becomes $\|\tilde{f}\|_n^2=\sum_{i=1}^n|\tilde{f}(\Psi(X_i))|^2$.
By the identifiability condition of $\Psi$, $\tilde{f}$ is a well defined function on the manifold $\mathcal{M}$ represented in ambient space $\bbR^{\tilde{d}}$ and has the same smoothness as $f$. Therefore, by specifying a GP prior \eqref{eq:prior} directly on $\bbR^{\tilde{d}}$, we would be able to achieve a posterior contraction rate at least $n^{-s/(2s+d)}(\log n)^{d+1}$, as indicated by the following corollary.
\begin{corollary}\label{thm:main2}
Assume that $\mathcal{M}$ is a $d$-dimensional compact $C^{\gamma}$ submanifold of $R^{D}$. Suppose that $\Psi:\bbR^D\rightarrow\bbR^{\tilde{d}}$ is an ambient space mapping (dimension reduction) such that $\Psi$ restricted on $\mathcal{M}$ is a $C^{\gamma'}$-diffeomorphism. Then by specifying the prior \eqref{eq:prior} with $\{\Psi(X_i)\}_{i=1}^n$ as observed predictors and Euclidean norm of $\bbR^{\tilde{d}}$ as $||\cdot||$ in \eqref{eq:cov}, for any $f_0\in C^{s}(\mathcal{M})$ with $s\leq \min\{2,\gamma-1,\gamma'-1\}$, (\ref{eq:0c}) will be satisfied for
$\epsilon_n=n^{-s/(2s+d)}(\log n)^{\kappa_1}$ and $\bar{\epsilon}_n=\epsilon_n(\log n)^{\kappa_2}$ with $\kappa_1=(1+d)/(2+d/s)$ and $\kappa_2=(1+d)/2$. This implies that the posterior contraction rate with respect to $\|\cdot\|_n$ will be at least $\epsilon_n=n^{-s/(2s+d)}(\log n)^{d+1}$.
\end{corollary}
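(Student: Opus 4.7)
The plan is to reduce Corollary~\ref{thm:main2} to Theorem~\ref{thm:main1} by viewing the corollary as an application of the theorem to the transformed manifold $\widetilde{\mathcal{M}}:=\Psi(\mathcal{M})\subset\bbR^{\tilde d}$ and to the transformed regression function $\tilde f_0:=f_0\circ\Psi_{\mathcal{M}}^{-1}$. First I would verify that $\widetilde{\mathcal{M}}$ meets the hypotheses of Theorem~\ref{thm:main1} in the ambient space $\bbR^{\tilde d}$: since $\Psi_{\mathcal{M}}$ is a $C^{\gamma'}$-diffeomorphism from the compact $C^{\gamma}$ manifold $\mathcal{M}$ onto its image, $\widetilde{\mathcal{M}}$ is a compact $d$-dimensional submanifold of $\bbR^{\tilde d}$ of smoothness class $C^{\min\{\gamma,\gamma'\}}$. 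Consequently the condition $s\leq\min\{2,\gamma-1,\gamma'-1\}$ becomes exactly the hypothesis $s\leq\min\{2,(\min\{\gamma,\gamma'\})-1\}$ required by Theorem~\ref{thm:main1} when applied to $\widetilde{\mathcal{M}}$.

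Next I would check that $\tilde f_0\in C^{s}(\widetilde{\mathcal{M}})$. Because $\Psi_{\mathcal{M}}^{-1}:\widetilde{\mathcal{M}}\to\mathcal{M}$ is a $C^{\gamma'}$ map between submanifolds of Euclidean spaces and $f_0\in C^{s}(\mathcal{M})$ with $s\leq\gamma'-1$, the composition $\tilde f_0=f_0\circ\Psi_{\mathcal{M}}^{-1}$ lies in $C^{s}(\widetilde{\mathcal{M}})$ by the chain rule (the bound $s\leq\gamma'-1$ ensures the derivatives of $\Psi_{\mathcal{M}}^{-1}$ that appear are at least $C^{s}$, so all Hölder norms of the composition are controlled by a multiple of $\|f_0\|_{C^s(\mathcal{M})}$ times a constant depending on $\Psi_{\mathcal{M}}^{-1}$). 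Similarly, a candidate function $f$ on $\bbR^D$ corresponds one-to-one with $\tilde f=f\circ\Psi_{\mathcal{M}}^{-1}$ on $\widetilde{\mathcal{M}}$, and the empirical norm is exactly preserved:
\begin{equation*}
\|\tilde f-\tilde f_0\|_n^2=\frac{1}{n}\sum_{i=1}^n\bigl(\tilde f(\Psi(X_i))-\tilde f_0(\Psi(X_i))\bigr)^2=\frac{1}{n}\sum_{i=1}^n\bigl(f(X_i)-f_0(X_i)\bigr)^2=\|f-f_0\|_n^2.
\end{equation*}
Thus posterior contraction in the transformed problem immediately translates to posterior contraction in the original problem at the same rate.

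Finally, since the prior described in the corollary is literally the prior \eqref{eq:prior} placed on functions on $\bbR^{\tilde d}$ with inputs $\{\Psi(X_i)\}_{i=1}^n$, it coincides with the prior to which Theorem~\ref{thm:main1} applies when the data lie on the $d$-dimensional $C^{\min\{\gamma,\gamma'\}}$ submanifold $\widetilde{\mathcal{M}}\subset\bbR^{\tilde d}$. Applying Theorem~\ref{thm:main1} to $(\widetilde{\mathcal{M}},\tilde f_0)$ yields the stated $\epsilon_n$ and $\bar\epsilon_n$, and by the empirical-norm identity above the contraction rate transfers back to the original parametrization.

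The main obstacle I anticipate is the bookkeeping in the smoothness reduction: in particular, making precise that (i) $\widetilde{\mathcal{M}}$ inherits the correct $C^{\min\{\gamma,\gamma'\}}$ structure as an embedded submanifold (so that Proposition~7.5 and the other geometric ingredients behind Theorem~\ref{thm:main1} remain valid in the transformed coordinates), and (ii) the Hölder class $C^{s}(\mathcal{M})$ is truly invariant under pullback by $\Psi_{\mathcal{M}}$ with constants depending only on $\Psi_{\mathcal{M}}$ and $\Psi_{\mathcal{M}}^{-1}$. Everything else is a direct invocation of Theorem~\ref{thm:main1}, so the corollary reduces to a clean change-of-variables argument once these smoothness invariances are formalized.
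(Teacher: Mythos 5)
Your proposal is correct and coincides with the route the paper takes: Section~2.5's discussion preceding the corollary is precisely this reduction, viewing $\widetilde{\mathcal{M}}=\Psi(\mathcal{M})\subset\bbR^{\tilde d}$ as the new compact $C^{\min\{\gamma,\gamma'\}}$ predictor manifold, $\tilde f_0=f_0\circ\Psi_{\mathcal{M}}^{-1}\in C^s(\widetilde{\mathcal{M}})$ as the new truth, and using the exact preservation of the empirical norm under the change of variables $X_i\mapsto\Psi(X_i)$ to transfer the conclusion of Theorem~\ref{thm:main1} back to the original parametrization. The paper does not give a separate formal proof beyond this sketch, so your change-of-variables argument (together with the smoothness bookkeeping you flag, which explains the appearance of $\gamma'-1$ in the hypothesis) is essentially the paper's argument.
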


\section{Adaptation to intrinsic dimension}\label{se:in}

To make our approach adaptive to the intrinsic dimension, we can follow an empirical Bayes approach and plug in an estimator of the dimension.  Such an estimator can be chosen focusing either on inference on $d$ or on prediction.  In the latter approach, the estimator for $d$ may not be consistent but one can achieve a near minimax-optimal rate.  Focusing on our truncated estimator in the random design case, we describe two approaches in the next subsections.

\subsection{Intrinsic dimension estimation}\label{se:pest}
Since $d$ serves as a hyper-parameter in prior \eqref{eq:prior1}, in principle one can specify a prior for $d$ over a finite grid $d_1\leq\ldots\leq d_{p}$ and conditioning on $d=d_j$, use \eqref{eq:prior1} as a prior for $f$. Since $W^A$ is conditionally independent of $d$ given $A$, one can marginalize out $d$ and obtain an equivalent prior for $A$ as a mixture distribution. In the proof of Theorem \eqref{thm:main1}, the only property of the prior of $A$ that is used is its tail behavior as $P(A>a)\sim \exp(-Ca^d)$. However, with an extra level of prior for $d$, the tail marginal prior probability $P(A>a)$ is dominated by $\exp(-Ca^{d_1})$, which has similar decay rate as the prior $A^{d_1}\sim Ga(a_0,b_0)$. As a consequence, specifying a prior for $d$ leads to sub-optimal rate as indicated by Corollary \ref{thm:main1b}.

Intuitively, information on the intrinsic dimension $d$ is contained in the marginal distribution of $X$, which cannot be fully revealed by estimating the conditional distribution $P(Y|X)$. This motivates our first approach of estimating $d$ directly based on the covariates $\{X_i\}$.

Many estimation methods have been proposed for determining the intrinsic dimension of a dataset lying on a manifold \citep{Carter2010,Farahmand07,Camastra2002,Levina2004,Little2009}. For example, \cite{Levina2004} considers a likelihood based approach and \cite{Little2009} relies on singular value decomposition of the local sample covariance matrix. \cite{Farahmand07} proposes a nearest-neighbor method and analyzes its finite-sample properties.
Their estimator $\hat{d}$ takes the form as
\begin{align*}
    \hat{d}=\frac{\log 2}{\log\hat{r}^{(k)}(X_1)-\log \hat{r}^{(\lceil k/2\rceil)}(X_1)},
\end{align*}
where $\hat{r}^{(k)}(X_1)$ is the $k$th nearest neighbor of $X_1$ in $\{X_i\}$. They proved that under some mild conditions on the distribution of $X$, if $n\geq k2^d$, then with probability at least $1-\delta$,
\begin{align*}
    |\hat{d}-d|\leq C\bigg\{\bigg(\frac{k}{n}\bigg)^{\frac{1}{d}}+\sqrt{\frac{\log(4/\delta)}{k}}\bigg\},
\end{align*}
where $C$ is some constant independent of $k$ and $n$. As a consequence, if we choose $k=n^{-1/2}$ and  let $\hat{d}_R$ be the closest integer to $\hat{d}$, then $P_0(\hat{d}_R\neq d)\to0$ as $n\to\infty$.

We will use \cite{Farahmand07} to obtain an estimator of $d$ as $\hat{d}_R$ and then plug in $\hat{d}_R$ into our prior \eqref{eq:prior1} to obtain an empirical Bayes estimator $\hat{f}_{EB}$ as in Section \ref{se:rd}. The following corollary summarizes its asymptotic performance.

\begin{corollary}
    Assume Assumption 1 in \cite{Farahmand07} and the same conditions as in Theorem \eqref{thm:main1}, then with probability tending to one,
\begin{align*}
    \max\big\{\|\hat{f}_{EB}-f_0\|_n,\|\hat{f}_{EB}-f_0\|_2\big\}\lesssim n^{-s/(2s+d)}(\log n)^{d+1}.
\end{align*}
\end{corollary}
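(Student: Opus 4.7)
The plan is a direct conditioning argument that reduces the corollary to Theorem \ref{le:BEr} on an event of probability tending to one. Note first that $\hat{d}_R$ is a function only of the covariates $\{X_i\}_{i=1}^n$, since it is constructed from nearest-neighbor distances. Define the event $E_n \triangleq \{\hat{d}_R = d\}$. Under Farahmand et al.'s Assumption~1, the cited bound applied with $k \sim n^{1/2}$ and any $\delta_n \to 0$ slowly enough gives $|\hat{d} - d| \to 0$ in $P_0$-probability, so $P_0(E_n) \to 1$ as $n \to \infty$. In particular, one can choose a deterministic sequence $n_0$ such that $P_0(E_n) \geq 1 - o(1)$ uniformly for $n \geq n_0$.

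The key observation is that on $E_n$ the plug-in prior used to construct $\hat{f}_{EB}$ coincides pointwise with the oracle prior \eqref{eq:prior1} formed with the true intrinsic dimension $d$. Since the posterior, and hence the truncated Bayes estimator, is a deterministic functional of the prior together with the data $S_n$, we have $\hat{f}_{EB} = \hat{f}$ on $E_n$, where $\hat{f}$ is the estimator appearing in Theorem \ref{le:BEr}. Decomposing by $E_n$ and $E_n^c$ then yields
\begin{align*}
P_0\big(\max\{\|\hat{f}_{EB} - f_0\|_n, \|\hat{f}_{EB} - f_0\|_2\} > C\bar{\epsilon}_n\big) &\leq P_0\big(\max\{\|\hat{f} - f_0\|_n, \|\hat{f} - f_0\|_2\} > C\bar{\epsilon}_n\big) \\
&\quad + P_0(E_n^c),
\end{align*}
and both terms vanish as $n \to \infty$: the first by Theorem \ref{le:BEr} applied to $\hat{f}$, and the second by the consistency step above. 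The explicit bound $C\bar{\epsilon}_n \leq n^{-s/(2s+d)}(\log n)^{d+1}$ is then inherited unchanged from Theorem \ref{le:BEr}.

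The main subtlety worth checking is that the data-dependence of the plug-in prior through the $X_i$'s creates no circular dependence when one invokes Theorem \ref{le:BEr}. This is clean: $\hat{d}_R$ is $\sigma(X_1,\ldots,X_n)$-measurable and independent of the noise $\{\epsilon_i\}$, so on $E_n$ the procedure is truly the oracle one with $d$ known, and the random-design convergence statement of Theorem \ref{le:BEr} applies verbatim conditionally on $E_n$. All the nontrivial content of the corollary lives in Theorem \ref{le:BEr} and in the external nearest-neighbor consistency result of \cite{Farahmand07}; no new posterior concentration argument is needed, which is exactly why the corollary is stated without its own dedicated proof machinery. If I had wanted to avoid exact equality of $\hat{d}_R$ and $d$ (e.g., for a non-integer estimator), the main obstacle would instead be quantifying how the constants in Theorem \ref{le:BEr} depend continuously on the prior-parameter $d$, but rounding sidesteps this completely.
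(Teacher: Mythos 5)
Your proof is correct and matches the argument the paper implicitly intends: the corollary's proof is omitted from the paper precisely because, as you note, the event $\{\hat{d}_R = d\}$ has probability tending to one by the Farahmand et al.\ bound with $k\sim n^{1/2}$ and a slowly decaying $\delta_n$, and on that event the plug-in posterior coincides exactly with the oracle posterior of Theorem~\ref{le:BEr}, from which the rate is inherited via a simple union bound. Your clarifications—that $\hat{d}_R$ is $\sigma(X_1,\ldots,X_n)$-measurable (ruling out any circularity with the noise) and that rounding makes the coincidence of priors exact rather than approximate—are exactly the points worth making explicit and are handled correctly.
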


\subsection{Cross validation}
In this subsection, we select a best dimension and its associated estimator as constructed in Section \ref{se:rd} based on prediction accuracy on a testing set. This selection rule cannot consistently estimate $d$ but still yields an optimal convergence rate (Theorem \ref{thm:crv}). In principle, the selection procedure described in this subsection can be applied to any hyperparameter selection problem.

We focus on the random design case. Let $d_{max}$ be a pre-specified upper bound for $d$. For example, we can choose $d_{max}=20$. Let $\Pi_k$ be the prior \eqref{eq:prior1} with $d=k$ for $k=1,\ldots,d_{max}$. The selection procedure proceeds as follows:
\begin{enumerate}
  \item Randomly split the data set with sample size $n+m$ into a training set $S_n=\{(X_i,Y_i):i=1,\ldots,n\}$ and a testing set $\tilde{S}_m=\{(\tilde{X}_i,\tilde{Y}_i):i=1,\ldots,m\}$.
  \item For $k=1,\ldots,d_{max}$, obtain a truncated Bayes estimator $\hat{f}^{(k)}$ under $\Pi_k$ defined as $\int f_A(x)d\Pi_k(f|S_n)$ for all $x\in \mathcal{M}$ as in Section \ref{se:rd}. Compute its mean squared prediction error (MSPE) $E_m^{(k)}=m^{-1}\sum_{i=1}^m\big(\hat{f}^{(k)}(\tilde{X}_i)-\tilde{Y}_i\big)^2$
      on the testing set.
  \item Let $\hat{d}_{CV}=\text{arg}\min_k E_m^{(k)}$. The final estimator is defined by $\hat{f}_{CV}=\hat{f}^{(\hat{d}_{CV})}$.
\end{enumerate}

The intuition is simple: an estimator with minimal MSPE, which approximately minimizes $\|\hat{f}^{(k)}-f_0\|^2_2$ over $k$, should be at least better than $\hat{f}^{(d)}$, the estimator under the true dimensionality $d$.
In practice, one can repeat step 1 and 2 for a number of times and use an averaged MSPE instead of $E_m^{(k)}$ to improve stability. However, the following theorem suggests that one splitting suffices for the adaptivity on the dimensionality.

\begin{theorem}\label{thm:crv}
Suppose $d\leq d_{max}$ and $A\geq \|f_0\|_{\infty}$ in the cross validation procedure. If $m\min_k\|\hat{f}^{(k)}-f_0\|_2^2\to \infty$, then under the conditions in Theorem \ref{thm:main1},
\begin{align*}
   \|\hat{f}_{CV}-f_0\|_2\lesssim n^{-s/(2s+d)}(\log n)^{d+1}.
\end{align*}
\end{theorem}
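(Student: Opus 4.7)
The plan is to establish an oracle-type inequality showing that $\hat f_{CV}$ performs, up to lower-order fluctuations, no worse than the ``oracle'' estimator $\hat f^{(d)}$ built at the true intrinsic dimension, and then to apply Theorem \ref{le:BEr} at $d\in\{1,\ldots,d_{max}\}$ to bound $\|\hat f^{(d)}-f_0\|_2$ by the target rate.

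First I would condition on the training sample $S_n$, so that each $\hat f^{(k)}$ becomes a fixed deterministic function with $|\hat f^{(k)}|\le A$ and hence $|\hat f^{(k)}-f_0|\le 2A$. Writing $\tilde Y_i=f_0(\tilde X_i)+\tilde\epsilon_i$ and expanding the square yields the decomposition
\begin{align*}
E_m^{(k)} \;=\; \|\hat f^{(k)}-f_0\|_{m,\mathrm{test}}^2 \;+\; 2T_m^{(k)} \;+\; \frac{1}{m}\sum_{i=1}^m\tilde\epsilon_i^2,
\end{align*}
where $\|g\|_{m,\mathrm{test}}^2=m^{-1}\sum_i g(\tilde X_i)^2$ and $T_m^{(k)}=-m^{-1}\sum_i(\hat f^{(k)}(\tilde X_i)-f_0(\tilde X_i))\tilde\epsilon_i$. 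The last summand does not depend on $k$ and so is irrelevant to the minimization defining $\hat d_{CV}$.

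Next I would apply Bernstein's inequality to the two centered sums: to $\|\hat f^{(k)}-f_0\|_{m,\mathrm{test}}^2-\|\hat f^{(k)}-f_0\|_2^2$ using the envelope $|\hat f^{(k)}-f_0|\le 2A$ and the variance bound $\mathrm{Var}\le 4A^2\|\hat f^{(k)}-f_0\|_2^2$; and to $T_m^{(k)}$, which conditional on the test covariates is a centered Gaussian with variance $\sigma^2\|\hat f^{(k)}-f_0\|_{m,\mathrm{test}}^2/m$. This yields, with probability at least $1-n^{-1}$,
\begin{align*}
\bigl|\|\hat f^{(k)}-f_0\|_{m,\mathrm{test}}^2 - \|\hat f^{(k)}-f_0\|_2^2\bigr| + |T_m^{(k)}| \;\lesssim\; \|\hat f^{(k)}-f_0\|_2\sqrt{\tfrac{\log n}{m}} + \tfrac{\log n}{m},
\end{align*}
and a union bound over the finite set $k\in\{1,\ldots,d_{max}\}$ makes the bound uniform in $k$. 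The hypothesis $m\min_k\|\hat f^{(k)}-f_0\|_2^2\to\infty$ forces both remainders to be $o\bigl(\|\hat f^{(k)}-f_0\|_2^2\bigr)$ uniformly in $k$, so $E_m^{(k)}-m^{-1}\sum_i\tilde\epsilon_i^2 = \|\hat f^{(k)}-f_0\|_2^2(1+o_p(1))$. Since $E_m^{(\hat d_{CV})}\le E_m^{(d)}$ by construction, this gives $\|\hat f_{CV}-f_0\|_2^2\le\|\hat f^{(d)}-f_0\|_2^2(1+o_p(1))$, and Theorem \ref{le:BEr} applied at the true $d$ yields $\|\hat f^{(d)}-f_0\|_2\lesssim n^{-s/(2s+d)}(\log n)^{d+1}$ with probability tending to one, finishing the proof.

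The hard part is the uniform, signal-aware control of the noise terms. The delicate regime is when $\|\hat f^{(k)}-f_0\|_2^2$ is already close to the minimax rate $n^{-2s/(2s+d)}$, for then $T_m^{(k)}$ has order $n^{-s/(2s+d)}/\sqrt m$ and must still be dominated by the signal $n^{-2s/(2s+d)}$. The condition $m\min_k\|\hat f^{(k)}-f_0\|_2^2\to\infty$ is precisely the calibration needed to absorb both the $O(\|\hat f^{(k)}-f_0\|_2/\sqrt m)$ cross term and the $O(\log n/m)$ Bernstein remainder relative to the signal; relaxing it would require either a penalized model-selection procedure or a finer analysis that treats the small- and large-risk candidates separately.
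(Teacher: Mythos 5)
Your proposal follows essentially the same route as the paper's proof: the same expansion of $E_m^{(k)}$ into a test-sample empirical squared error, a Gaussian cross term, and a $k$-independent noise-variance term; Bernstein's inequality for $\|\hat f^{(k)}-f_0\|_m^2 - \|\hat f^{(k)}-f_0\|_2^2$ and a Gaussian tail for the cross term, with the hypothesis $m\min_k\|\hat f^{(k)}-f_0\|_2^2\to\infty$ used to absorb both fluctuations into the signal; and then $E_m^{(\hat d_{CV})}\le E_m^{(d)}$ together with Theorem~\ref{le:BEr} at the true $d$. The paper's proof is slightly more explicit in its choices of Bernstein/Gaussian thresholds (giving an inequality sandwiching $E_m^{(k)}$ with explicit constants $1/4$ and $2$) whereas you phrase it as a $(1+o_p(1))$ factor, but the argument is the same.
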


The only condition on $m$ is $m\min_k\|\hat{f}^{(k)}-f_0\|_2^2\to \infty$, which guarantees $\|\hat{f}^{(k)}-f_0\|_2$ and $\|\hat{f}^{(k)}-f_0\|_m$ to be close for all $k$, where $\|\cdot\|_m$ denotes the empirical $L_2$ norm on the testing set. As a consequence, this condition allows us to use $E_m^{(k)}=\|\hat{f}^{(k)}-f_0\|_m^2+R_m^{(k)}$ to approximate $\|\hat{f}^{(k)}-f_0\|_2^2$, where the variation of the remainder term $R_m^{(k)}$ across $k$ is asymptotically negligible. Since we expect $\min_k\|\hat{f}^{(k)}-f_0\|_2^2\approx \|\hat{f}^{(d)}-f_0\|_2^2\approx\|\hat{f}^{(d)}-f_0\|_n^2\asymp$ $n^{-2s/(2s+d)}(\log n)^{d+1}$, $m$ might be chosen as $O(n^{\gamma})$ for any $\gamma>2s/(2s+d)$. In practice, one can simply choose $m=O(n)$.

\section{Numerical Examples}

\subsection{Regression on the Swiss Roll}
We start with a toy example where $X$ lies on a two-dimensional Swiss roll in the $100$-dimensional Euclidean space (Fig.~\ref{fig:sr} plots a typical Swiss roll in the three-dimensional Euclidean space). $X$ is generated as follows. We first sample $T=(T_1,T_2,T_3)^T$ from a two-dimensional Swiss roll in three-dimensional ambient space as
\begin{align*}
    T_1=U\cos(U), \ T_2=V, \ T_3=U\sin(U),
\end{align*}
with $U\sim Unif\big(\frac{3\pi}{2},\frac{9\pi}{2}\big)$ and $V\sim Unif(0,20)$. Then we transform $T$ into a $100$-dimensional vector via $X=\Omega T$, where $\Omega$ is a randomly generated $100$-by-$3$ matrix whose components follow iid $N(0,1)$. $\Omega$ will be fixed in each synthetic dataset. The response $Y$ depends on $X$ through
\begin{align*}
    Y=4\bigg(\frac{1}{3\pi}U-\frac{1+3\pi}{2}\bigg)^2+\frac{\pi}{20} V+ N(0,0.1^2).
\end{align*}
To assess the fitting performance, we use the empirical error $\|\hat{f}-f_0\|_n$ of our estimator $\hat{f}$ defined in Section \ref{se:rd} on the design points as a criterion. Here $f_0(x)=4(u-0.5)^2+\pi v$.
In the GP approach, we apply the empirical Bayes approach described in Section \ref{se:pest} and run $10,000$ iterations with the first $5,000$ as burn-in in each replicate.
We report an average empirical error (AEE) over $100$ replicates in Table~\ref{tab:2}. In this example, the GP estimator has a relatively fast convergence rate even though the dimensionality of the ambient space is large, which justifies our theory.

\begin{figure}[tb]
  % Requires \usepackage{graphicx}
  \center
  \includegraphics[width=4in]{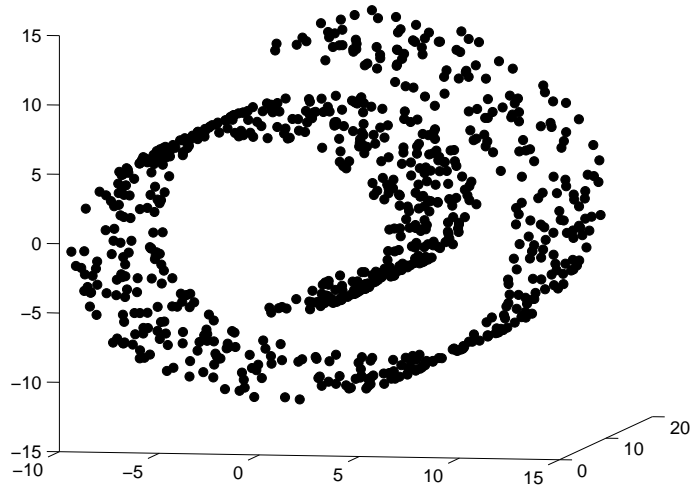}\\
  \caption{A typical Swiss roll in three-dimensional Euclidean space.}\label{fig:sr}
\end{figure}

\begin{table}
\centering
\caption{Simulation results for the Swiss roll example over $100$ replicates. The numbers in the parentheses indicate standard deviations.}
\begin{tabular}{|c|ccccc|}
  % after \\: \hline or \cline{col1-col2} \cline{col3-col4} ...
    \hline
   &$n=50$ & $n=100$ & $n=200$ & $n=400$ & $n=800$ \\
    \hline
   AEE & .164(.090) & .143(.026) & .121(.012) & .106(.005) & .095(.003)\\
    \hline
\end{tabular}
\label{tab:2}
\end{table}

\subsection{Application to the lucky cat data}
The lucky cat data (Fig.~\ref{fig:data}) has intrinsic dimensionality one, which is the dimension of the rotation angle $\theta$. Since we know the true value of $\theta$, we create the truth $f_0(\theta)=\cos\theta$ as a continuous function on the unit circle. The responses are simulated from $Y_i=f_0(\theta_i)+\epsilon_i$ by adding independent Gaussian noises $\epsilon_i\sim N(0,0.1^2)$ to the true values. In this model, the total sample size $N=72$ and the predictors $X_i\in\bbR^{p}$ with $D=16,384$. To assess the impact of the sample size $n$ on the fitting performance, we randomly divide $n=18$, $36$ and $64$ samples into training set and treat the rest as testing set. Training set is used to fit a model and testing set to quantify the estimation accuracy. For each training size $n$, we repeat this procedure for $m=100$ times and calculate the square root of mean squared prediction error (MSPE) on the testing set,
\begin{align*}
    \sum_{l=1}^m \frac{1}{N-n}\sum_{i\in T_l}||\hat{Y}_i-f_0(\theta_i)||^2,
\end{align*}
where $T_l$ is the $l$th testing set and $\hat{Y}_i$ is an estimation of $E[Y|X_i]=f_0(\theta_i)$. We apply two GP based algorithms on this data set: 1. vanilla GP specified by \eqref{eq:prior1}; 2. Two stage GP (2GP) where the $D$-dimensional predictors were projected into $\bbR^2$ by using Laplacian eigenmap \citep{Belkin2003} in the first stage and then a GP with projected features as predictors was fitted in the second stage. To assess the prediction performance, we also compare our GP prior based models \eqref{eq:prior1} with lasso \citep{Tibshirani1996} and elastic net (EN) \citep{Zou2005} under the same settings. We choose these two competing models because they are among the most widely used methods in high dimensional regression settings and perform especially good when the true model is sparse. In the GP models, we set $d=1$ since the sample size for this dataset is too small for most dimension estimation algorithms to reliably estimate $d$. In addition, for each simulation, we run $10,000$ iterations with the first $5,000$ as burn-in.
\begin{table}
\centering
\caption{Square root of MSPE for the lucky cat data by using two different approaches over $100$ random splitting are displayed. The numbers in the parentheses indicate standard deviations.}
\begin{tabular}{|c|ccc|}
  % after \\: \hline or \cline{col1-col2} \cline{col3-col4} ...
    \hline
    & $n=18$ & $n=36$ & $n=54$ \\
    \hline
  EN & .416(.152) & .198(.042) & .149(.031) \\
  LASSO & .431(.128) & .232(.061) & .163(.038) \\
  GP & .332(.068) & .128(.036) & \textbf{.077(.014)} \\
  2GP & \textbf{.181(.051)} &  \textbf{.124(.038)} & .092(.021) \\
   \hline
\end{tabular}
\label{tab:1}
\end{table}

The results are shown in Table.~\ref{tab:1}. As we can see, under each training size $n$, GP performs the best. Moreover, as $n$ increases, the prediction error of GP decays much faster than EN and Lasso: when $n=18$, the square root of MSPEs by using EN and lasso are about 125\% of that by using GP; however as $n$ increases to $54$, this ratio becomes about 200\%. Moreover, the standard deviation of square root of MSPEs by using GP are also significantly lower than those by using lasso and EN. It is not surprising that 2GP has better performance than GP when $n$ is small since the dimensionality reduction map $\Psi$ is constructed using the whole dataset (the Laplacian eigenmap code we use cannot do interpolations). Therefore when the training size $n$ become closer to the total data size $72$, GP becomes better. In addition, GP is computationally faster than 2GP due to the manifold learning algorithm in the first stage of 2GP.

\section{Auxiliary results}
% What about latent Gaussian process regression models in which you don't have a continuous response and Gaussian residual but may have a response in the exponential family with transformed mean changing with predictors via f(x_i) - can the theory be easily extended to this case?

In the GP prior \eqref{eq:prior1}, the covariance function $K^a:\mathcal{M}\times \mathcal{M}\rightarrow R$ is essentially defined on the submanifold $\mathcal{M}$. Therefore, \eqref{eq:prior1} actually defines a GP on $\mathcal{M}$ and we can study its posterior contraction rate as a prior for functions on the manifold. In this section, we combine geometry properties and Bayesian nonparametric asymptotic theory to prove the theorems in section 2.

\subsection{Reproducing Kernel Hilbert Space on the Manifold}
Being viewed as a covariance function defined on $[0,1]^D\times [0,1]^D$, $K^a(\cdot,\cdot)$ corresponds to a reproducing kernel Hilbert space (RKHS) $\mathbb{H}^{a}$, which is defined as the completion of $\mathcal{H}$, the linear space of all functions on $[0,1]^D$ with the following form
\[
x\mapsto\sum_{i=1}^{m}a_iK^a(x_i,x), x\in[0,1]^D,
\]
indexed by $a_1,\ldots,a_m\in \bbR$ and $x_1,\ldots,x_m\in[0,1]^D,\ m\in\mathds{N}$, relative to the norm induced by the inner product defined through $\langle K^a(x,\cdot), K^a(y,\cdot)\rangle_{\mathbb{H}^{a}}=K^a(x,y)$. Similarly, $K^a(\cdot,\cdot)$ can also be viewed as a covariance function defined on $\mathcal{M}\times \mathcal{M}$, with the associated RKHS denoted by $\tilde{\mathbb{H}}^{a}$. Here $\tilde{\mathbb{H}}^{a}$ is the completion of $\tilde{\mathcal{H}}$, which is the linear space of all functions on $\mathcal{M}$ with the following form
  \[
x\mapsto\sum_{i=1}^{m}a_iK^a(x_i,x), x\in\mathcal{M},
\]
indexed by $a_1,\ldots,a_m\in \bbR$ and $x_1,\ldots,x_m\in\mathcal{M},\ m\in\mathds{N}$.

Many probabilistic properties of GPs are closely related to the RKHS associated with its covariance function. Readers can refer to \cite{Aronszajn1950} and \cite{Van2008b} for introductions on RKHS theory for GPs on Euclidean spaces. In order to generalize RKHS properties in Euclidean spaces to submanifolds, we need a link to transfer the theory. The next lemma achieves this by characterizing the relationship between $\mathbb{H}^{a}$ and $\tilde{\mathbb{H}}^{a}$.
\begin{lemma}\label{le:1a}
For any $f\in \tilde{\mathbb{H}}^{a}$, there exists $g\in\mathbb{H}^{a}$ such that $g|_\mathcal{M}=f$ and $||g||_{\mathbb{H}^{a}}=||f||_{\tilde{\mathbb{H}}^{a}}$, where
$g|_\mathcal{M}$ is the restriction of $g$ on $\mathcal{M}$. Moreover, for any other $g'\in\mathbb{H}^{a}$ with $g'|_\mathcal{M}=f$, we have $||g'||_{\mathbb{H}^{a}}\geq||f||_{\tilde{\mathbb{H}}^{a}}$, which implies $||f||_{\tilde{\mathbb{H}}^{a}}^=\inf_{g\in\mathbb{H}^{a},g|_\mathcal{M}=f}||g||_{\mathbb{H}^{a}}$.
\end{lemma}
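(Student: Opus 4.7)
The plan is to establish a Hilbert-space-theoretic correspondence between $\mathbb{H}^a$ and $\tilde{\mathbb{H}}^a$ via an orthogonal decomposition of $\mathbb{H}^a$ by the null space of the restriction map. This is essentially Aronszajn's restriction theorem for reproducing kernels, specialized to the submanifold $\mathcal{M} \subseteq [0,1]^D$. First, I would introduce $\mathcal{N} := \{g \in \mathbb{H}^a : g|_{\mathcal{M}} = 0\}$. Because point evaluation is continuous on any RKHS (for each $x$, $|g(x)| = |\langle g, K^a(x,\cdot)\rangle_{\mathbb{H}^a}| \leq \|g\|_{\mathbb{H}^a}\sqrt{K^a(x,x)}$), the set $\mathcal{N}$ is the intersection of the kernels of a family of continuous linear functionals, hence a closed subspace. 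This yields the orthogonal decomposition $\mathbb{H}^a = \mathcal{N} \oplus \mathcal{N}^{\perp}$.

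Next, I would introduce the candidate space $V := \{g|_{\mathcal{M}} : g \in \mathbb{H}^a\}$ and show that the restriction map $R: \mathcal{N}^{\perp} \to V$ is a bijection: surjectivity is by definition of $V$, and injectivity holds because a $g \in \mathcal{N}^{\perp}$ with $g|_{\mathcal{M}} = 0$ lies in $\mathcal{N} \cap \mathcal{N}^{\perp} = \{0\}$. I transport the inner product from $\mathcal{N}^{\perp}$ to $V$ via this bijection, so $V$ is a Hilbert space and the induced norm satisfies $\|f\|_V = \|R^{-1}f\|_{\mathbb{H}^a}$. For any $g \in \mathbb{H}^a$ with $g|_{\mathcal{M}} = f$, its orthogonal decomposition $g = g_0 + g_1$ with $g_0 \in \mathcal{N}^{\perp}$, $g_1 \in \mathcal{N}$ gives $g_0|_{\mathcal{M}} = f$ and $\|g\|_{\mathbb{H}^a}^2 = \|g_0\|_{\mathbb{H}^a}^2 + \|g_1\|_{\mathbb{H}^a}^2$, showing both that $\|f\|_V = \inf_{g|_{\mathcal{M}}=f}\|g\|_{\mathbb{H}^a}$ and that the infimum is attained uniquely on $\mathcal{N}^{\perp}$.

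The third step is to verify that $V$ has reproducing kernel equal to $K^a$ restricted to $\mathcal{M} \times \mathcal{M}$. For $x \in \mathcal{M}$, the function $K^a(x,\cdot)|_{\mathcal{M}}$ lies in $V$; let $k_x \in \mathcal{N}^{\perp}$ be its preimage under $R$, i.e., the orthogonal projection of $K^a(x,\cdot)$ onto $\mathcal{N}^{\perp}$. For any $f \in V$ with preimage $g_0 \in \mathcal{N}^{\perp}$, the reproducing property in $\mathbb{H}^a$ together with $g_0 \perp \mathcal{N}$ yields $\langle f, K^a(x,\cdot)|_{\mathcal{M}}\rangle_V = \langle g_0, k_x \rangle_{\mathbb{H}^a} = \langle g_0, K^a(x,\cdot)\rangle_{\mathbb{H}^a} = g_0(x) = f(x)$. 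Thus $V$ is the RKHS of $K^a|_{\mathcal{M} \times \mathcal{M}}$. By the Moore--Aronszajn uniqueness theorem, $V = \tilde{\mathbb{H}}^a$ isometrically, so $\|f\|_V = \|f\|_{\tilde{\mathbb{H}}^a}$.

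Combining these pieces gives the lemma: take $g$ to be the $\mathcal{N}^{\perp}$-representative of $f$ to obtain a norm-preserving extension, and any other extension $g' = g + g_1$ with $g_1 \in \mathcal{N}$ satisfies $\|g'\|_{\mathbb{H}^a}^2 = \|f\|_{\tilde{\mathbb{H}}^a}^2 + \|g_1\|_{\mathbb{H}^a}^2 \geq \|f\|_{\tilde{\mathbb{H}}^a}^2$. The main obstacle I anticipate is the step identifying $V$ with $\tilde{\mathbb{H}}^a$: naively one would like to map a finite combination $\sum_i a_i K^a(x_i,\cdot)|_{\mathcal{M}}$ in $\tilde{\mathcal{H}}$ to $\sum_i a_i K^a(x_i,\cdot)$ in $\mathcal{H}$, but this assignment is not well defined on equivalence classes of coefficient representations, since a combination can vanish on $\mathcal{M}$ without vanishing on $[0,1]^D$. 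The projection-based construction above circumvents this by working directly with $\mathcal{N}^{\perp}$ and invoking uniqueness of the RKHS, and the only delicate calculation is verifying the reproducing property for $K^a(x,\cdot)|_{\mathcal{M}}$ through its projection $k_x$.
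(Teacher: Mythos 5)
Your proof is correct, but it takes a genuinely different route from the paper. The paper works \emph{bottom-up}: it defines a map $\Phi$ directly on the dense pre-Hilbert space $\tilde{\mathcal{H}}$ of finite kernel combinations $\sum_i a_i K^a(x_i,\cdot)$, $x_i\in\mathcal{M}$, sending each such combination on $\mathcal{M}$ to the ``same'' combination viewed on $[0,1]^D$, checks that this is an isometry, and extends by density to an isometry $\tilde{\mathbb{H}}^a \to \mathbb{H}^a$; the minimal-norm property then follows by noting $g'-g$ is orthogonal to the range of $\Phi$. You work \emph{top-down} via Aronszajn's restriction theorem: decompose $\mathbb{H}^a = \mathcal{N}\oplus\mathcal{N}^{\perp}$ by the closed null space of the restriction map, transport the inner product of $\mathcal{N}^{\perp}$ to the space $V$ of restrictions, verify that $V$ reproduces with kernel $K^a|_{\mathcal{M}\times\mathcal{M}}$, and invoke Moore--Aronszajn uniqueness to identify $V=\tilde{\mathbb{H}}^a$. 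Both arguments are correct and both ultimately exploit orthogonal projection for the minimum-norm statement; yours is more abstract and avoids manipulating a density argument on the pre-Hilbert level, while the paper's is more explicit and constructive. One remark on the obstacle you flag at the end: the ``naive'' assignment $\sum_i a_i K^a(x_i,\cdot)|_{\mathcal{M}} \mapsto \sum_i a_i K^a(x_i,\cdot)$ (which is exactly the paper's $\Phi$) is in fact well defined, because the $\mathbb{H}^a$-norm of $\sum_i a_i K^a(x_i,\cdot)$ is $\bigl(\sum_{i,j} a_i a_j K^a(x_i,x_j)\bigr)^{1/2}$, a quantity depending only on kernel values at points of $\mathcal{M}\times\mathcal{M}$ and hence equal to the $\tilde{\mathbb{H}}^a$-norm of the restriction; so if a finite combination with centres in $\mathcal{M}$ vanishes on $\mathcal{M}$, it has zero $\mathbb{H}^a$-norm and therefore vanishes on all of $[0,1]^D$. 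Your concern is real for \emph{general} elements of $\mathbb{H}^a$ (they can vanish on $\mathcal{M}$ without vanishing globally, which is exactly why $\mathcal{N}$ is nontrivial), but it does not arise for kernel sections centred on $\mathcal{M}$, which are the only objects $\Phi$ acts on.
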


This lemma implies that any element $f$ in the RKHS $\tilde{\mathbb{H}}^{a}$ could be considered as the restriction of some element $g$ in the RKHS $\mathbb{H}^{a}$. Particularly, there exists a unique such element $g$ in $\mathbb{H}^{a}$ such that the norm is preserved, i.e. $||g||_{\mathbb{H}^{a}}=||f||_{\tilde{\mathbb{H}}^{a}}$.

\subsection{Background on Posterior Convergence Rate for GP}
As shown in \cite{Ghosal2000}, in order to characterize the posterior contraction rate in a Bayesian nonparametric problem, such as density estimation, fixed/random design regression or classification, we need to verify some conditions on the prior measure $\Pi$. Specifically, we describe the sufficient conditions for randomly rescaled GP prior as \eqref{eq:prior} given in \cite{Van2009}. Let $\mathcal{X}$ be the predictor space and $f_0$ be the true function $f_0:\mathcal{X}\rightarrow \bbR$, which is the log density $\log p(x)$ in density estimation, regression function $E[Y|X]$ in regression and logistic transformed conditional probability $\text{logit} P(Y=1|X)$ in classification. We will not consider density estimation since to specify the density by log density $f_0$, we need to know the support $\mathcal{M}$ so that $e^{f_0}$ can be normalized to produce a valid density. Let $\epsilon_n$ and $\bar{\epsilon}_n$ be two sequences. If there exist Borel measurable subsets $B_n$ of $C(\mathcal{X})$ and constant $K>0$ such that for $n$ sufficiently large,
\begin{equation}\label{eq:0c}
\begin{aligned}
% \nonumber to remove numbering (before each equation)
P(||W^A-f_0||_{\infty}\leq\epsilon_n)&\geq e^{-n\epsilon_n^2},\\
P(W^A\notin B_n)&\leq e^{-4n\epsilon_n^2},\\
\log N(\bar{\epsilon}_n,B_n,||\cdot||_{\infty})&\leq n\bar{\epsilon}_n^2,
\end{aligned}
\end{equation}
where $W^A\sim \Pi$ and $||\cdot||_{\infty}$ is the sup-norm on $C(\mathcal{X})$, then the posterior contraction rate would be at least $\epsilon_n\vee\bar{\epsilon}_n$ under $\|\cdot\|_n$. In our case, $\mathcal{X}$ is the $d$-dimensional submanifold $\mathcal{M}$ in the ambient space $\bbR^D$. We require $\mathcal{M}$ to be compact because the space of continuous functions on a compact metric space is a separable Banach space, which is fundamental to apply the theory from \cite{Van2009}.
To verify the first concentration condition, we need to give upper bounds to the so-called concentration function \citep{Van2009} $\phi_{f_0}^a(\epsilon)$ of the GP $W^a$ around truth $f_0$ for given $a$ and $\epsilon$. $\phi_{f_0}^a(\epsilon)$ is composed of two terms. Both terms depend on the RKHS $\tilde{\mathbb{H}}^{a}$ associated with the covariance function of the GP $W^a$. The first term is the decentering function $\inf\{||h||_{\tilde{\mathbb{H}}^{a}}^2:||h-f_0||_{\infty}<\epsilon\}$, where $||\cdot||_{\tilde{\mathbb{H}}^{a}}$ is the RKHS norm. This quantity measures how well the truth $f_0$ could be approximated by the elements in the RKHS. The second term is the negative log small ball probability $-\log P(||W^a||_{\infty}<\epsilon)$, which depends on the covering entropy $\log N(\epsilon_n,\tilde{\mathbb{H}}^{a}_1,||\cdot||_{\infty})$ of the unit ball in the RKHS $\tilde{\mathbb{H}}^{a}$. As a result of this dependence, by applying Borell's inequality \citep{Van2008b}, the second and third conditions can often be proved as byproducts by using the conclusion on the small ball probability.

As pointed out by \cite{Van2009}, the key to ensure the adaptability of the GP prior on Euclidean spaces is a sub-exponential type tail of its stationary covariance function's spectral density, which is true for squared exponential and Mat\'ern class covariance functions.
More specifically, a squared exponential covariance function $K_1(x,y)=\exp\big\{-||x-y||^2/2\big\}$ on $\bbR^D$ has a spectral representation as
  $$K_1(x,y)=\int_{\bbR^D} e^{-i(\lambda,x-y)}\mu(d\lambda),$$
where $\mu$ is its spectral measure with a sub-Gaussian tail, which is lighter than sub-exponential tail in the sense that: for any $\delta>0$,
\begin{equation}\label{eq:1}
    \int e^{\delta||\lambda||}\mu(d\lambda)<\infty.
\end{equation}
For convenience, we will focus on squared exponential covariance function, since generalizations to other covariance functions with sub-exponential decaying spectral densities are possible with more elaboration.

\subsection{Decentering Function}
To estimate the decentering function, the key step is to construct a function $I_a(f)$ on the manifold $\mathcal{M}$ to approximate a differentiable function $f$, so that the RKHS norm $||I_a(f)||_{\tilde{\mathbb{H}}^{a}}$ can be tightly upper bounded. Unlike in Euclidean spaces where functions in the RKHS $\mathbb{H}^{a}$ can be represented via Fourier transformations \citep{Van2009}, there is no general way to represent and calculate RKHS norms of functions in the RKHS $\tilde{\mathbb{H}}^{a}$ on a manifold. Therefore in the next lemma we provide a direct way to construct the approximation function $I_a(f)$ for any truth $f$ via convolving $f$ with $K^a$ on manifold $\mathcal{M}$:
\begin{align}
I_a(f)(x)&=\bigg(\frac{a}{\sqrt{2\pi}}\bigg)^d\int_\mathcal{M}K^a(x,y)f(y)dV(y)\nonumber\\
&=\bigg(\frac{a}{\sqrt{2\pi}}\bigg)^d\int_\mathcal{M}\exp\bigg\{-\frac{a^2||x-y||^2}{2}\bigg\}f(y)dV(y), \quad x\in\mathcal{M}, \label{eq:6b}
\end{align}
where $V$ is the Riemannian volume form of $\mathcal{M}$. Heuristically, for large $a$, the above integrand only has non-negligible value in a small neighborhood around $x$. Therefore we can conduct a change of variable in the above integral with transformation $\phi^x:B_{\delta}\rightarrow W$ defined by (7.2) in the appendix in a small neighborhood $W$ of $x$:
\begin{align*}
    I_a(f)(x)&=\bigg(\frac{a}{\sqrt{2\pi}}\bigg)^d\int_{\bbR^d}
    \exp\bigg\{-\frac{a^2||\phi^x(u)-\phi^x(0)||^2}{2}\bigg\}\\
    &\qquad\qquad\qquad\qquad\qquad\qquad f\big(\phi^x(u)\big)
    \sqrt{\det(g^{\phi}_{ij}(u))}du,\\
    &\approx\bigg(\frac{a}{\sqrt{2\pi}}\bigg)^d\int_{\bbR^d}
    \exp\bigg\{-\frac{a^2||u||^2}{2}\bigg\}f\big(\phi^x(u)\big)du, \\
    &\approx f\big(\phi^x(0)\big)=f(x),\quad x\in\mathcal{M},
\end{align*}
where the above approximation holds since: 1. $\phi^x(0)=x$; 2. $\phi^x$ preserve local distances (Appendix, Proposition 7.5\,(3)); 3. the Jacobian $\sqrt{\det(g^{\phi}_{ij}(u))}$ is close to one (Appendix, Proposition 7.5\,(2)). From this heuristic argument, we can see that the approximation error $||I_a(w)-f_0||_{\infty}$ is determined by two factors: the convolution error $\big|\big(\frac{a}{\sqrt{2\pi}}\big)^d\int_{\bbR^d}
\exp\big\{-\frac{a^2||u||^2}{2}\big\}f\big(\phi^x(u)\big)du-f(x)\big|$ and the non-flat error caused by the nonzero curvature of $\mathcal{M}$.  Moreover, we can expand each of these errors as a polynomial of $1/a$ and call the expansion term related to $1/a^k$ as $k$th order error.

When $\mathcal{M}$ is Euclidean space $\bbR^d$, the non-flat error is zero, and by Taylor expansion the convolution error has order $s$ if $f_0\in C^s(\bbR^d)$ and $s\leq 2$, where $C^s(\bbR^d)$ is the Holder class of $s$-smooth functions on $\bbR^d$. This is because the Gaussian kernel $\exp\{-||(x-y)||^2/2\}$ has a vanishing moment up to first order: $\int x\exp(-||(x-y)||^2/2)dx=0$. Generally, the convolution error could have order up to $s+1$ if the convolution kernel $K$ has vanishing moments up to order $s$, i.e. $\int x^{t}K(x)dx=0,t=1,\ldots,s$. However, for general manifold $\mathcal{M}$ with non-vanishing curvature tensor, the non-flat error always has order two (see the proof of Lemma \ref{le:7}). This implies that even though carefully chosen kernels for the covariance function can improve the convolution error to have order higher than two, the overall approximation still tends to have second order error due to the deterioration caused by the nonzero curvature of the manifold. The following lemma formalizes the above heuristic argument on the order of the approximation error by \eqref{eq:6b} and further provides an upper bound on the decentering function.

\begin{lemma}\label{le:7}
Assume that $\mathcal{M}$ is a $d$-dimensional compact $C^{\gamma}$ submanifold of $R^{D}$. Let $C^{s}(\mathcal{M})$ be the set of all functions on $\mathcal{M}$ with h\"{o}lder smoothness $s$. Then for any $f\in C^{s}(\mathcal{M})$ with $s\leq \min\{2,\gamma\}$, there exist constants $a_0\geq1$, $C>0$ and $B>0$ depending only on $\mu$, $\mathcal{M}$ and $f$ such that for all $a\geq a_0$,
\[
\inf\{||h||_{\tilde{\mathbb{H}}^{a}}^2:\sup_{x\in \mathcal{M}}|h(x)-f(x)|\leq Ca^{-s}\}\leq Ba^d.
\]
\end{lemma}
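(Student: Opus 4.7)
The plan is to take $h = I_a(f)$, as defined in (\ref{eq:6b}), as the decentering element in $\tilde{\mathbb{H}}^a$. Two estimates need to be established: a uniform approximation bound $\|I_a(f) - f\|_\infty \lesssim a^{-s}$ and an RKHS norm bound $\|I_a(f)\|_{\tilde{\mathbb{H}}^a}^2 \lesssim a^d$. Combining them gives the claimed estimate on the decentering function.

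For the norm bound, I would use the basic RKHS identity: for any $\psi \in L^2(\mathcal{M}, V)$, the element $h(\cdot) = \int_\mathcal{M} K^a(\cdot, y)\psi(y)\,dV(y)$ of $\tilde{\mathbb{H}}^a$ has squared norm $\iint_{\mathcal{M}\times\mathcal{M}} K^a(x,y)\psi(x)\psi(y)\,dV(x)\,dV(y)$. Applied with $\psi = (a/\sqrt{2\pi})^d f$, this makes $\|I_a(f)\|_{\tilde{\mathbb{H}}^a}^2$ explicit. Fixing $x$ and using the normal-coordinate change of variable $\phi^x$ from Proposition 7.5, the inner integral $\int_\mathcal{M} K^a(x,y)\,dV(y)$ is of order $(\sqrt{2\pi}/a)^d$ uniformly in $x$; integrating over compact $\mathcal{M}$ and multiplying by $(a/\sqrt{2\pi})^{2d}$ then gives the desired $O(a^d)$.

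For the approximation, I would decompose
\[
I_a(f)(x) - f(x) = \left(\tfrac{a}{\sqrt{2\pi}}\right)^{\!d}\!\!\int_\mathcal{M}\! K^a(x,y)[f(y)-f(x)]\,dV(y) + f(x)\Bigl[\left(\tfrac{a}{\sqrt{2\pi}}\right)^{\!d}\!\!\int_\mathcal{M}\! K^a(x,y)\,dV(y) - 1\Bigr],
\]
and attack both pieces by splitting the integrals into $B_\delta(x) \cap \mathcal{M}$ (with $\delta$ smaller than the injectivity radius and the domain of $\phi^x$) and its complement. On the complement, $\|x-y\|$ is bounded below by compactness, so each piece contributes $O(e^{-ca^2})$, which is negligible. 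Inside, pulling back via $\phi^x$ and inserting the expansions
\[
\|\phi^x(u) - x\|^2 = \|u\|^2 + O(\|u\|^4), \qquad \sqrt{\det g^\phi_{ij}(u)} = 1 + O(\|u\|^2)
\]
from Proposition 7.5, together with a H\"{o}lder--Taylor expansion of $f \circ \phi^x$ at $0$ with remainder $R(u)$ satisfying $|R(u)| \lesssim \|u\|^s$, reduces each contribution after the rescaling $v = au$ to a centered Gaussian integral against polynomial plus H\"{o}lder-remainder factors. Odd-in-$u$ pieces vanish by symmetry of the Gaussian; the surviving even pieces from the Jacobian and distance expansions contribute $O(a^{-2})$, while the H\"{o}lder remainder of $f$ contributes $O(a^{-s})$. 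Since $s \leq 2$, both are $O(a^{-s})$.

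The main obstacle is the borderline case $s = 2$, in which three sources of error --- the second-order remainder of $f$, the $O(\|u\|^2)$ correction of the Jacobian, and the $O(\|u\|^4)$ correction of the squared Euclidean distance (which produces $O(a^{-2})$ once the $a^2$ factor in the exponent is absorbed) --- all live at the same order $a^{-2}$ and must be combined without slack. Uniformity in $x$ is maintained because compactness of $\mathcal{M}$ yields uniform control on the injectivity radius, on the constants in the Proposition 7.5 expansions, and on the H\"{o}lder modulus of $f$.
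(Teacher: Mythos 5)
Your proposal is correct and follows essentially the same strategy as the paper: take $h = I_a(f)$ as the decentering element, bound its RKHS norm via $\|I_a(f)\|_{\tilde{\mathbb{H}}^a}^2 = (a/\sqrt{2\pi})^{2d}\iint K^a(x,y)f(x)f(y)\,dV\,dV$ and the uniform $(\sqrt{2\pi}/a)^d$-order of $\int_{\mathcal{M}}K^a(x,\cdot)\,dV$, and control the uniform approximation error by pulling back via $q$-normal coordinates, localizing by the Gaussian tail, and invoking the metric, Jacobian, and distance expansions from Proposition 7.5 together with a H\"older--Taylor expansion of $f\circ\phi^x$ and Gaussian symmetry.

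The one organizational difference worth flagging: the paper's $T_1+T_2+T_3+T_4$ decomposition expands the Taylor polynomial of $f\circ\phi^q$ against the \emph{flat} Gaussian $e^{-a^2\|u\|^2/2}$ only, so the cancellation of the linear term is exact; the Jacobian and distance corrections ($T_3$, $T_4$) are then bounded using only $\|f\|_\infty$, never touching the Taylor expansion. Your two-piece split instead expands $f$ against the full pulled-back weight $e^{-a^2\|\phi^x(u)-x\|^2/2}\sqrt{\det g^\phi_{ij}(u)}$, which is not exactly symmetric in $u$, so the linear Taylor term does not vanish on the nose: you must additionally track the cross-terms of the linear part against the $O(\|u\|^2)$ Jacobian correction and the $O(\|u\|^4)$ exponent correction. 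These are fine---the quadratic correction is a genuine even form (so linear $\times$ quadratic is odd and vanishes) and the residual $O(\|u\|^3)$ and $O(\|u\|^4)$ pieces contribute only $O(a^{-4})$ after integrating---but the "odd-in-$u$ pieces vanish by symmetry" remark as stated glosses over this extra bookkeeping. With that accounting made explicit, your route and the paper's reach the same $O(a^{-s})$ bound, with the same borderline behavior at $s=2$ that you correctly identify.
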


\subsection{Centered Small Ball Probability}
As indicated by the proof of Lemma 4.6 in \cite{Van2009}, to obtain an upper bound on $-\log P(||W^a||_{\infty}<\epsilon)$, we need to provide an upper bound for the covering entropy $\log N(\epsilon,\tilde{\mathbb{H}}^{a}_1,||\cdot||_{\infty})$ of the unit ball in the RKHS $\tilde{\mathbb{H}}^{a}$ on the submanifold $\mathcal{M}$. Following the discussion in section 4.1, we want to link $\tilde{\mathbb{H}}^{a}$ to $\mathbb{H}^a$, the associated RKHS defined on the ambient space $\bbR^D$. Therefore, we need a lemma to characterize the space $\mathbb{H}^a$ \citep[Lemma 4.1]{Van2009}.

\begin{lemma}\label{le:2a}
$\mathbb{H}^{a}$ is the set of real parts of the functions
\[
 x\mapsto\int e^{i(\lambda,x)}\psi(\lambda)\mu_a(d\lambda),
 \]
 when $\psi$ runs through the complex Hilbert space $L_2(\mu_a)$. Moreover, the RKHS norm of the above function is $||\psi||_{L_2(\mu_a)}$, where $\mu_a$ is the spectral measure of the covariance function $K^a$.
\end{lemma}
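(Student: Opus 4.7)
The strategy is the standard spectral/Fourier construction of the RKHS of a stationary kernel. Since $K^a(x,y) = \exp(-a^2\|x-y\|^2)$ is continuous, stationary and positive definite on $\bbR^D$, Bochner's theorem furnishes a spectral measure $\mu_a$ (here a centered Gaussian measure whose covariance scales like $a^2 I$) with
$$K^a(x,y) = \int_{\bbR^D} e^{i(\lambda,x-y)}\, \mu_a(d\lambda).$$
The plan is to introduce the Fourier-type map $T\colon L_2(\mu_a)\to C(\bbR^D)$ by $(T\psi)(x) = \int e^{i(\lambda,x)}\psi(\lambda)\,\mu_a(d\lambda)$ and show that $T(L_2(\mu_a))$, equipped with the push-forward norm $\|T\psi\|_* := \|\psi\|_{L_2(\mu_a)}$, coincides with $\mathbb{H}^a$.

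Next I would verify the two defining properties of an RKHS. First, $T$ is well defined and continuous since Cauchy--Schwarz gives $|(T\psi)(x)|\leq \|\psi\|_{L_2(\mu_a)}\sqrt{\mu_a(\bbR^D)}$. Second, for each $y\in\bbR^D$ the function $\psi_y(\lambda)=e^{-i(\lambda,y)}$ lies in $L_2(\mu_a)$ (since $\mu_a$ is finite), and the spectral representation yields $T\psi_y = K^a(\cdot,y)$. The reproducing property is then a direct computation:
$$\bigl\langle T\psi,\, K^a(\cdot,y)\bigr\rangle_* = \langle \psi,\psi_y\rangle_{L_2(\mu_a)} = \int \psi(\lambda)e^{i(\lambda,y)}\,\mu_a(d\lambda) = (T\psi)(y).$$
Specializing to $\psi=\psi_{y'}$ recovers $\langle K^a(\cdot,y'),K^a(\cdot,y)\rangle_* = K^a(y,y')$, so the inner product on $T(L_2(\mu_a))$ is consistent with the canonical inner product on the pre-Hilbert space generated by the kernel sections.

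Finally, I would conclude by uniqueness of the RKHS. Because $T$ is a linear isometry from the complete space $L_2(\mu_a)$, the image $T(L_2(\mu_a))$ is complete; it contains every kernel section $K^a(\cdot,y)$ and it satisfies the reproducing property for $K^a$, so it must equal $\mathbb{H}^a$ with matching norms, which gives $\|T\psi\|_{\mathbb{H}^a}=\|\psi\|_{L_2(\mu_a)}$. To obtain the statement with \emph{real} parts of $T\psi$, I would use that $K^a$ is real and $\mu_a$ symmetric, so conjugating $\psi(\lambda)\mapsto \overline{\psi(-\lambda)}$ is an isometric involution of $L_2(\mu_a)$ whose fixed subspace maps onto the real-valued functions in $T(L_2(\mu_a))$; taking real parts and quotienting out the kernel of this operation reproduces the stated parametrization of the real RKHS.

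The only point needing genuine care is the surjectivity of $T$ onto the whole RKHS. This reduces to the density of the linear span of $\{K^a(\cdot,y):y\in\bbR^D\}$ in $\mathbb{H}^a$, which is built into the definition of $\mathbb{H}^a$ as the completion of $\mathcal{H}$; together with completeness of $T(L_2(\mu_a))$ this forces equality rather than proper containment. The rest is bookkeeping with Bochner's theorem and the standard Fourier isometry, which is why the paper can invoke this as a direct quotation of \citep[Lemma 4.1]{Van2009}.
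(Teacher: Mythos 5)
The paper does not supply a proof of this lemma at all: it is imported verbatim as \citep[Lemma 4.1]{Van2009}, so there is no in-paper argument to compare against. Your proposal reconstructs exactly the standard spectral argument behind that cited lemma (Bochner representation, the Fourier map $T\psi(x)=\int e^{i(\lambda,x)}\psi(\lambda)\,\mu_a(d\lambda)$, verification that kernel sections $K^a(\cdot,y)=T\psi_y$ lie in the image and reproduce, completeness of the image, and uniqueness of the RKHS), so it is essentially the same approach. One small point worth making explicit: the claim that $\|T\psi\|_{\mathbb{H}^a}=\|\psi\|_{L_2(\mu_a)}$ (rather than an infimum over preimages) requires $T$ to be injective on $L_2(\mu_a)$; for the squared exponential kernel this holds because $\mu_a$ is a nondegenerate Gaussian with everywhere-positive density, so $T\psi\equiv 0$ forces $\psi=0$ $\mu_a$-a.e. — you assert $T$ is an isometry but do not record this justification, and a similar care is needed in the final ``real parts'' step, where the representative $\psi$ giving a real-valued $T\psi$ is the Hermitian-symmetric one $\psi(\lambda)=\overline{\psi(-\lambda)}$.
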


Based on this representation of $\mathbb{H}^a$ on $\bbR^D$, \cite{Van2009} proved an upper bound $Ka^D\big(\log \frac{1}{\epsilon}\big)^{D+1}$ for $\log N(\epsilon,\tilde{\mathbb{H}}^{a}_1,||\cdot||_{\infty})$ through constructing an $\epsilon$-covering set composed of piecewise polynomials. However, there is no straightforward generalization of their scheme from Euclidean spaces to manifolds. The following lemma provides an upper bound for the covering entropy of $\tilde{\mathbb{H}}^{a}_1$, where the $D$ in the upper bounds for $\mathbb{H}^{a}_1$ is reduced to $d$. The main novelty in our proof is the construction of an $\epsilon$-covering set composed of piecewise transformed polynomials \eqref{eq:pp} via analytically extending the truncated Taylor polynomial approximations \eqref{eq:ru} of the elements in $\tilde{\mathbb{H}}^{a}_1$. As the proof indicates, the $d$ in $a^d$ relates to the covering dimension $d$ of $\mathcal{M}$, i.e. the $\epsilon$-covering number $N(\epsilon,\mathcal{M},\epsilon)$ of $\mathcal{M}$ is proportional to $1/\epsilon^d$. The $d$ in $(\log \frac{1}{\epsilon}\big)^{d+1}$ relates to the order of the number $k^d$ of coefficients in piecewise transformed polynomials of degree $k$ in $d$ variables.

\begin{lemma}\label{le:3a}
Assume that $\mathcal{M}$ is a $d$-dimensional $C^{\gamma}$ compact submanifold of $R^D$ with $\gamma\geq 2$. Then for squared exponential covariance function $K^a$, there exists a constant $K$ depending only on $d$, $D$ and $\mathcal{M}$, such that for $\epsilon<1/2$ and $a>\max\{a_0,\epsilon^{-1/(\gamma-1)}\}$, where $\delta_0$ is defined in Lemma 7.7 in the appendix and $a_0$ is a universal constant,
\[
\log N(\epsilon,\tilde{\mathbb{H}}^{a}_1,||\cdot||_{\infty})\leq Ka^d\bigg(\log \frac{1}{\epsilon}\bigg)^{d+1}.
\]
\end{lemma}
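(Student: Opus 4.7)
The plan mirrors the Euclidean entropy bound for $\mathbb{H}^a_1$ given in \cite{Van2009}, but carried out in intrinsic coordinates on $\mathcal{M}$ so that the ambient dimension $D$ is replaced by $d$. Three ingredients drive the argument: Lemma \ref{le:1a}, which lifts $\tilde{\mathbb{H}}^a$ to $\mathbb{H}^a$ isometrically; Lemma \ref{le:2a} together with the sub-Gaussian spectral tail \eqref{eq:1}, which yields strong analyticity of any element $g\in\mathbb{H}^a_1$; and the local chart $\phi^x:B_\delta\subset\bbR^d\to\mathcal{M}$ of Proposition 7.5, which turns an intrinsic patch into a Euclidean ball where Taylor expansion is meaningful.

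First I would fix $f\in\tilde{\mathbb{H}}^a_1$ and extend it via Lemma \ref{le:1a} to $g\in\mathbb{H}^a_1$ with $g|_\mathcal{M}=f$. Writing $g(x)=\Re\int e^{i(\lambda,x)}\psi(\lambda)\,\mu_a(d\lambda)$ with $\|\psi\|_{L_2(\mu_a)}\le 1$ and applying Cauchy--Schwarz to \eqref{eq:1} produces the uniform ambient derivative bound $\|\partial^\alpha g\|_\infty\lesssim a^{|\alpha|}\sqrt{|\alpha|!}$ for every multi-index $\alpha$. Next I would cover $\mathcal{M}$ by $N\lesssim a^d$ patches of the form $\phi^{x_i}(B_r)$ with $r\asymp 1/a$, obtained from a $1/a$-net $\{x_i\}$ of $\mathcal{M}$, using compactness and $d$-dimensionality to count patches; the charts come with uniformly bounded $C^\gamma$ derivatives by Proposition 7.5.

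On each patch I would approximate $f_i(u):=g(\phi^{x_i}(u))$ by its Taylor polynomial $P_i$ at $u=0$ of degree $k$. A chain-rule estimate combining the ambient derivative bound on $g$ with the geometric bounds on $\phi^{x_i}$ gives
\[
\sup_{u\in B_r}|f_i(u)-P_i(u)|\ \lesssim\ \frac{(car)^{k+1}}{\sqrt{(k+1)!}}\ +\ r^{\gamma-1},
\]
where the first summand is the analytic convolution error and the second is the non-flat curvature remainder. Choosing $k\asymp\log(1/\epsilon)$ and $r\asymp 1/a$ forces the first term below $\epsilon/2$, while the hypothesis $a>\epsilon^{-1/(\gamma-1)}$ forces the second below $\epsilon/2$. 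Quantizing each of the $\binom{k+d}{d}\lesssim k^d$ coefficients of every $P_i$ on an $O(\epsilon/k^d)$ grid produces the family of \emph{piecewise transformed polynomials} $x\mapsto P_i((\phi^{x_i})^{-1}(x))$ from \eqref{eq:pp}, which forms an $\epsilon$-cover of $\tilde{\mathbb{H}}^a_1$ in sup-norm with cardinality $\log|\mathcal{N}_\epsilon|\lesssim N\cdot k^d\cdot\log(1/\epsilon)\lesssim a^d(\log(1/\epsilon))^{d+1}$, as claimed.

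The principal difficulty is the local estimate in the third step. The chart $\phi^{x_i}$ is only $C^\gamma$, so naive chain-rule bounds on derivatives of $f_i$ of order $|\alpha|>\gamma$ blow up, yet we need truncation degree $k\asymp\log(1/\epsilon)$ to beat the threshold. The way out is to separate the analytic ambient expansion of $g$ (derivatives of all orders are controlled by the sub-Gaussian tail) from the geometric pullback (only $\gamma$ derivatives), and to quantify the resulting error as a non-flat remainder of order $r^{\gamma-1}$, in the spirit of Proposition 7.5 and the proof of Lemma \ref{le:7}. The condition $a>\max\{a_0,\epsilon^{-1/(\gamma-1)}\}$ is exactly what is needed for this curvature-induced error to be dominated by $\epsilon$ at the patch scale $r\asymp 1/a$.
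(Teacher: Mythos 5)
Your strategy matches the paper's: lift to $\mathbb{H}^a_1$ via Lemma \ref{le:1a}, use the spectral representation from Lemma \ref{le:2a}, cover $\mathcal{M}$ by $\sim a^d$ normal-coordinate patches, approximate locally by polynomials in $(\phi^{x_i})^{-1}(x)$, quantize coefficients. The covering family \eqref{eq:pp} is exactly the one you propose, and the role you assign to $a>\epsilon^{-1/(\gamma-1)}$ is correct. However, there is a genuine gap at what you yourself flag as ``the principal difficulty'': you take $P_i$ to be the degree-$k$ Taylor polynomial of $f_i(u)=g(\phi^{x_i}(u))$ with $k\asymp\log(1/\epsilon)$, but $\phi^{x_i}$ is only $C^{\gamma}$ and hence so is $f_i$; the Taylor polynomial of degree $k>\gamma$, and the remainder bound $\frac{(car)^{k+1}}{\sqrt{(k+1)!}}+r^{\gamma-1}$ you assert, are simply not defined. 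Saying ``the way out is to separate the analytic ambient expansion of $g$ from the geometric pullback'' names the right goal but does not produce the object $P_i$.

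The paper's resolution is specific and is the crux of the argument. One does \emph{not} Taylor-expand $g\circ\phi^{p}$. Instead one replaces the chart $\phi^{p}$ by its own degree-$\gamma$ Taylor polynomial $P_{p,\gamma}$, so that the surrogate $r(u)=h_\psi(P_{p,\gamma}(u))$ is the composition of an entire function with a polynomial and therefore extends holomorphically to $\{z\in\mathds{C}^d:\|\mathrm{Re}\,z\|<\delta_0,\ \|\mathrm{Im}\,z\|<\rho/a\}$. Cauchy's formula then gives $|D^n r(u)/n!|\leq C/R^n$ with $R\asymp 1/a$ for \emph{all} $n$, which is the bound needed to truncate at degree $k\asymp\log(1/\epsilon)$. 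The geometric error is controlled by the Lipschitz property of $h_\psi$ (Lemma \ref{le:6a}): $|h_\psi(\phi^p(u))-h_\psi(P_{p,\gamma}(u))|\leq a\tau\,\|\phi^p(u)-P_{p,\gamma}(u)\|\lesssim a\|u\|^\gamma\lesssim a^{1-\gamma}<\epsilon$, which is where $a>\epsilon^{-1/(\gamma-1)}$ enters. The piecewise polynomials \eqref{eq:pp} are Taylor truncations of $r$, not of $g\circ\phi^{p_i}$. Your sketch would be a complete proof once you replace ``Taylor polynomial of $f_i$'' by ``Taylor polynomial of $h_\psi\circ P_{p_i,\gamma}$'' and supply the Lipschitz estimate above for the switch from $\phi^{p_i}$ to $P_{p_i,\gamma}$.
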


Similar to Lemma 4.6 in \cite{Van2009}, Lemma \ref{le:3a} implies an upper bound on $-\log P(||W^a||_{\infty}<\epsilon)$.

\begin{lemma}\label{le:4a}
Assume that $\mathcal{M}$ is a $d$-dimensional compact $C^{\gamma}$ submanifold of $\bbR^D$ with $\gamma\geq 2$. If $K^a$ is the squared exponential covariance function with inverse bandwidth $a$, then for some $a_0>0$, there exist constants $C$ and $\epsilon_0$ that only depend on $a_0$, $\mu$, $d$, $D$ and $\mathcal{M}$, such that, for $a\geq \max\{a_0, \epsilon^{-1/(\gamma-1)}\}$ and $\epsilon<\epsilon_0$,
\[
-\log P\big(\sup_{x\in \mathcal{M}}|W_x^a|\leq\epsilon\big)\leq Ca^d\bigg(\log \frac{a}{\epsilon}\bigg)^{d+1}.
\]
\end{lemma}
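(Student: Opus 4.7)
The plan is to derive the claimed bound on $-\log P(\sup_{x \in \mathcal{M}} |W_x^a| \leq \epsilon)$ by combining the RKHS entropy estimate of Lemma~\ref{le:3a} with a general Gaussian concentration–entropy duality, following the template of Lemma~4.6 of van der Vaart and van Zanten (2009). First, I would invoke Lemma~\ref{le:3a} to obtain, for any $\eta < 1/2$ and $a \geq \max\{a_0, \eta^{-1/(\gamma-1)}\}$,
\[
\log N(\eta, \tilde{\mathbb{H}}_1^a, \|\cdot\|_\infty) \leq K a^d \bigl(\log(1/\eta)\bigr)^{d+1},
\]
which is the manifold analogue of Lemma~4.5 of van der Vaart–van Zanten (2009) with the ambient dimension $D$ replaced by the intrinsic dimension $d$.

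The second step is to apply a Kuelbs–Li-type small-ball inequality (Kuelbs–Li 1993; Li–Linde 1999) to the centered Gaussian element $W^a$ viewed in the separable Banach space $C(\mathcal{M})$ equipped with the sup-norm (separable because $\mathcal{M}$ is compact). The inequality yields a schematic bound of the form
\[
-\log P\bigl(\|W^a\|_\infty \leq \epsilon\bigr) \lesssim \log N(\eta_\epsilon, \tilde{\mathbb{H}}_1^a, \|\cdot\|_\infty)
\]
for a scale $\eta_\epsilon$ chosen to balance the two sides of the duality. Taking $\eta_\epsilon$ polynomially small in $\epsilon/a$ --- for instance $\eta_\epsilon \asymp \epsilon / a^{c}$ for a suitable $c$ --- ensures both that the compatibility constraint $a \geq \eta_\epsilon^{-1/(\gamma-1)}$ required by Lemma~\ref{le:3a} holds whenever $a \geq \max\{a_0, \epsilon^{-1/(\gamma-1)}\}$, and that $\log(1/\eta_\epsilon) = O(\log(a/\epsilon))$. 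Substituting the entropy bound with this choice of $\eta_\epsilon$ then delivers
\[
-\log P\bigl(\|W^a\|_\infty \leq \epsilon\bigr) \leq C a^d \bigl(\log(a/\epsilon)\bigr)^{d+1},
\]
which is precisely the assertion of the lemma. The factor $a^d$ comes from the $a$-dependence of the entropy bound, and the logarithmic exponent $d+1$ is inherited from the entropy exponent; the apparent upgrade of $\log(1/\epsilon)$ in the entropy to $\log(a/\epsilon)$ in the small-ball bound is produced precisely by the $a$-dependent rescaling $\eta_\epsilon \asymp \epsilon/a^{c}$.

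The main obstacle will be making the Kuelbs–Li duality quantitative and uniform in the length-scale parameter $a$. The abstract inequality typically holds only after verifying that the entropy function is slowly or regularly varying (both satisfied here, since $(\log(1/\eta))^{d+1}$ is slowly varying in $\eta$), and requires careful bookkeeping to ensure that the constants depend only on $a_0, \mu, d, D, \mathcal{M}$ rather than on $a$ or $\epsilon$. In particular, one has to handle the boundary regime $a \asymp \epsilon^{-1/(\gamma-1)}$ where the constraint in Lemma~\ref{le:3a} becomes tight, and verify that the switch from the ambient-space entropy argument of van der Vaart–van Zanten (2009) to the manifold-based one of Lemma~\ref{le:3a} does not introduce geometric constants that blow up with $a$ or $\epsilon$. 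Once those bookkeeping details are in place, the assertion follows as an immediate consequence of Lemma~\ref{le:3a} and the abstract Kuelbs–Li inequality.
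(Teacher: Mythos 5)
Your overall approach---invoking the entropy bound of Lemma~\ref{le:3a} for the unit ball of $\tilde{\mathbb{H}}^{a}$ and converting it to a small-ball bound via the Kuelbs--Li/Li--Linde duality---is exactly what the paper intends: the paper gives no explicit proof and simply says "Similar to Lemma 4.6 in \cite{Van2009}", whose proof follows precisely this template.

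However, there is a concrete gap in the way you handle the compatibility constraint, and your own proposal is internally inconsistent on this point. The Kuelbs--Li inequality (Lemma 4.9 of van der Vaart and van Zanten, 2008) requires evaluating the entropy at the scale $\eta_\epsilon = \epsilon/\sqrt{2\phi_0^a(\epsilon)}$, which with the conjectured answer $\phi_0^a(\epsilon) \asymp a^d(\log(a/\epsilon))^{d+1}$ forces $\eta_\epsilon \asymp \epsilon / \bigl(a^{d/2}(\log(a/\epsilon))^{(d+1)/2}\bigr)$, i.e.\ your exponent is pinned at $c = d/2 > 0$; you cannot "choose a suitable $c$." And with any $c>0$ the constraint of Lemma~\ref{le:3a}, namely $a \geq \eta_\epsilon^{-1/(\gamma-1)}$, is strictly \emph{stronger} than the hypothesis $a \geq \epsilon^{-1/(\gamma-1)}$: writing it out, $a \geq \eta_\epsilon^{-1/(\gamma-1)}$ becomes, up to the logarithmic factor, $a^{\gamma-1-d/2} \geq 1/\epsilon$, which under the lemma's hypothesis $a \geq \epsilon^{-1/(\gamma-1)}$ only gives $a^{\gamma-1-d/2} \geq \epsilon^{-1+d/(2(\gamma-1))}$, and the missing factor $\epsilon^{d/(2(\gamma-1))}$ tends to zero. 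So the claim in your third paragraph that the rescaling "ensures" the constraint holds is false; this is precisely the "boundary regime $a \asymp \epsilon^{-1/(\gamma-1)}$" you subsequently flag as the main obstacle, so you are asserting something and then admitting you haven't verified it. This constraint is a genuinely new feature of the manifold setting (the Euclidean entropy bound of van der Vaart--van Zanten's Lemma 4.5 has no such restriction), and a correct proof must either strengthen the hypothesis on $a$ (roughly $a \gtrsim \epsilon^{-1/(\gamma-1-d/2)}$, requiring $\gamma > 1 + d/2$) or modify the argument so that the entropy bound is never needed below scale $a^{-(\gamma-1)}$. As written, the proposal has a real gap here rather than a mere bookkeeping detail.
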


Before proving Theorem \ref{thm:main1}, we need another two technical lemmas for preparations, which are the analogues of Lemma 4.7 and 4.8 in \cite{Van2009} for RKHS on Euclidean spaces.
\begin{lemma}\label{le:5a}
For squared exponential covariance function, if $a\leq b$, then $\sqrt{a}\tilde{\mathbb{H}}^{a}_1\subset \sqrt{b}\tilde{\mathbb{H}}^{b}_1$.
\end{lemma}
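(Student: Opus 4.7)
The plan is to reduce the manifold claim to its analogue on the ambient space $\bbR^D$, where one can exploit the explicit spectral description of the squared exponential RKHS. The bridge between these two RKHS's is already in place via Lemma~\ref{le:1a}, which says that $\tilde{\mathbb{H}}^a$ is the ``restriction'' of $\mathbb{H}^a$, with the manifold norm equal to the infimum of ambient norms over all extensions. So given $f\in\tilde{\mathbb{H}}^a$ with $\|f\|_{\tilde{\mathbb{H}}^a}\leq 1$, I would first pick, by Lemma~\ref{le:1a}, a norm-preserving extension $g\in\mathbb{H}^a$ with $\|g\|_{\mathbb{H}^a}=\|f\|_{\tilde{\mathbb{H}}^a}\leq 1$. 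Once I bound $\|g\|_{\mathbb{H}^b}$ in terms of $\|g\|_{\mathbb{H}^a}$, the target inclusion on the manifold follows because $\|f\|_{\tilde{\mathbb{H}}^b}\leq \|g\|_{\mathbb{H}^b}$, again by Lemma~\ref{le:1a} applied now with parameter $b$ (using that $g$ is still a valid extension of $f$, so the inf defining the manifold norm is controlled by this particular $g$).

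On $\bbR^D$ I would carry out the standard change-of-spectral-measure argument. By Lemma~\ref{le:2a}, represent
\[
g(x)=\mathrm{Re}\int e^{i(\lambda,x)}\psi(\lambda)\,\mu_a(d\lambda),\qquad \|\psi\|_{L_2(\mu_a)}=\|g\|_{\mathbb{H}^a}.
\]
Rewriting the same integral against the spectral measure $\mu_b$ of $K^b$ yields $g(x)=\mathrm{Re}\int e^{i(\lambda,x)}\tilde\psi(\lambda)\,\mu_b(d\lambda)$ with $\tilde\psi=\psi\cdot (d\mu_a/d\mu_b)$, so that
\[
\|g\|_{\mathbb{H}^b}^2\leq \|\tilde\psi\|_{L_2(\mu_b)}^2=\int |\psi|^2\,\frac{d\mu_a}{d\mu_b}\,d\mu_a\leq \Bigl(\sup_\lambda \tfrac{d\mu_a}{d\mu_b}(\lambda)\Bigr)\|\psi\|_{L_2(\mu_a)}^2.
\]
For the squared exponential kernel the spectral density is proportional to $a^{-D}\exp(-\|\lambda\|^2/(4a^2))$, and hence $d\mu_a/d\mu_b(\lambda)=(b/a)^{D}\exp\bigl(-\tfrac14\|\lambda\|^2(a^{-2}-b^{-2})\bigr)$, with the exponent non-positive for $a\leq b$. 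So the supremum is bounded by an explicit power of $b/a$, which bounds $\|g\|_{\mathbb{H}^b}$ in terms of $\|g\|_{\mathbb{H}^a}$ and produces an inclusion of the type $c\,\mathbb{H}^a_1\subset c'\,\mathbb{H}^b_1$, in the same spirit as Lemma~4.7 of van~der~Vaart and van~Zanten (2009).

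Transferring this bound back to $\mathcal{M}$ via Lemma~\ref{le:1a}, as explained in the first paragraph, yields the desired inclusion on the manifold. The only nontrivial bookkeeping is matching the scalar factor $\sqrt{a}/\sqrt{b}$ in the statement with what the Radon--Nikodym estimate produces: the Fourier argument naturally delivers the ambient-dimensional power of $b/a$, so one must be careful about how the dimensional constants are absorbed into the normalization of $\tilde{\mathbb{H}}^a_1$. I expect this constant-tracking to be the only genuine obstacle; the substantive steps (choice of extension, change of spectral measure, and the pointwise bound on $d\mu_a/d\mu_b$) are all essentially mechanical once Lemmas~\ref{le:1a} and~\ref{le:2a} are available.
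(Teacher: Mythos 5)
Your overall architecture --- lift $f$ to a norm-preserving extension $g\in\mathbb{H}^a$ via Lemma~\ref{le:1a}, bound $\|g\|_{\mathbb{H}^b}$ on the ambient space, and come back down via $\|f\|_{\tilde{\mathbb{H}}^b}\leq\|g\|_{\mathbb{H}^b}$ --- is exactly the paper's. The only difference is in how the ambient-space step is discharged: the paper simply cites Lemma~4.7 of van~der~Vaart and van~Zanten (2009), which is precisely the inclusion $\sqrt{a}\,\mathbb{H}^a_1\subset\sqrt{b}\,\mathbb{H}^b_1$ on $\bbR^D$, whereas you attempt to re-derive it by a change of spectral measure.

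That re-derivation, as you have written it, does not yield the stated exponent, and this is not a bookkeeping issue. The chain $\|\tilde\psi\|^2_{L_2(\mu_b)}=\int|\psi|^2(d\mu_a/d\mu_b)\,d\mu_a\leq\bigl(\sup_\lambda d\mu_a/d\mu_b\bigr)\|\psi\|^2_{L_2(\mu_a)}$ is correct, but for the squared exponential kernel $\sup_\lambda d\mu_a/d\mu_b=(b/a)^D$, so you obtain $\|g\|_{\mathbb{H}^b}\leq(b/a)^{D/2}\|g\|_{\mathbb{H}^a}$, i.e.\ $a^{D/2}\mathbb{H}^a_1\subset b^{D/2}\mathbb{H}^b_1$, and after restriction only $a^{D/2}\tilde{\mathbb{H}}^a_1\subset b^{D/2}\tilde{\mathbb{H}}^b_1$. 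You describe the discrepancy with the target $\sqrt{a}\,\tilde{\mathbb{H}}^a_1\subset\sqrt{b}\,\tilde{\mathbb{H}}^b_1$ as ``constant-tracking,'' but $(b/a)^{D/2}$ versus $(b/a)^{1/2}$ is a different \emph{power} of the ratio, not a fixed multiplicative constant, and the gap is unbounded as $b/a\to\infty$. Nor can the sup-Radon--Nikodym step be sharpened: taking $g=K^a(0,\cdot)$, a direct Gaussian integral gives $\|g\|^2_{\mathbb{H}^b}\big/\|g\|^2_{\mathbb{H}^a}=b^{2D}\big/\bigl(a^D(2b^2-a^2)^{D/2}\bigr)\asymp (b/a)^D$, so the exponent $D/2$ is essentially tight for the extension you chose, and the unit ball $\tilde{\mathbb{H}}^a_1$ is fixed by the RKHS norm with no free normalization to absorb a power of $b/a$. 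To prove the lemma as stated you must either cite the ambient inclusion outright, as the paper does, or supply a genuinely different argument for the $\sqrt{b/a}$ constant (for instance, one that exploits the manifold dimension $d<D$ through a smarter choice of extension); the plan as written establishes only the weaker $D/2$-version.
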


\begin{lemma}\label{le:6a}
Any $h\in\tilde{\mathbb{H}}^{a}_1$ satisfies $|h(x)|\leq 1$ and $|h(x)-h(x')|\leq a||x-x'||\tau$ for any $x,x'\in\mathcal{M}$, where $\tau^2=\int||\lambda||^2d\mu(\lambda)$.
\end{lemma}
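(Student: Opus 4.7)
The plan is to derive both bounds by lifting the problem from $\tilde{\mathbb{H}}^a$ to the Euclidean RKHS $\mathbb{H}^a$ via Lemma \ref{le:1a}, and then exploiting the spectral representation supplied by Lemma \ref{le:2a}.

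For the uniform bound, I would avoid the lift altogether and use the reproducing property directly in $\tilde{\mathbb{H}}^a$. Since $K^a(x,x)=\exp(0)=1$, the reproducing kernel evaluation $K^a(x,\cdot)$ is a unit vector in $\tilde{\mathbb{H}}^a$, so Cauchy--Schwarz gives
\begin{equation*}
|h(x)| \;=\; \bigl|\langle h,\,K^a(x,\cdot)\rangle_{\tilde{\mathbb{H}}^a}\bigr| \;\leq\; \|h\|_{\tilde{\mathbb{H}}^a}\,\sqrt{K^a(x,x)} \;\leq\; 1.
\end{equation*}

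For the Lipschitz bound, Lemma \ref{le:1a} lets me pick $g\in\mathbb{H}^a$ with $g|_{\mathcal{M}}=h$ and $\|g\|_{\mathbb{H}^a}=\|h\|_{\tilde{\mathbb{H}}^a}\leq 1$. By Lemma \ref{le:2a}, write $g(x)=\mathrm{Re}\!\int e^{i(\lambda,x)}\psi(\lambda)\,\mu_a(d\lambda)$ with $\|\psi\|_{L_2(\mu_a)}=\|g\|_{\mathbb{H}^a}\leq 1$. Then
\begin{equation*}
|g(x)-g(x')| \;\leq\; \int \bigl|e^{i(\lambda,x)}-e^{i(\lambda,x')}\bigr|\,|\psi(\lambda)|\,\mu_a(d\lambda) \;\leq\; \|x-x'\|\int \|\lambda\|\,|\psi(\lambda)|\,\mu_a(d\lambda),
\end{equation*}
using $|e^{i\alpha}-e^{i\beta}|\leq|\alpha-\beta|$. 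By Cauchy--Schwarz the last integral is at most $\|\psi\|_{L_2(\mu_a)}\bigl(\int\|\lambda\|^2\mu_a(d\lambda)\bigr)^{1/2}$.

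The one genuine calculation is to show that $\int\|\lambda\|^2\mu_a(d\lambda)=a^2\tau^2$. This follows by tracking how $\mu_a$ scales with $a$: from $K^a(x,y)=K^1(ax,ay)=\int e^{-i(\eta,ax-ay)}\mu(d\eta)$ and the substitution $\lambda=a\eta$, the density of $\mu_a$ is $p(\lambda/a)/a^D$ where $p$ is the density of $\mu=\mu_1$. Changing variables back gives $\int\|\lambda\|^2\mu_a(d\lambda)=a^2\int\|\eta\|^2\mu(d\eta)=a^2\tau^2$. Combining everything yields $|g(x)-g(x')|\leq a\tau\|x-x'\|$, and restricting to $x,x'\in\mathcal{M}$ gives the stated bound on $h$. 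No step looks particularly thorny; the only thing to be careful about is the $a$-scaling of the spectral measure, which is purely bookkeeping.
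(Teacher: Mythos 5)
Your argument is correct but takes a somewhat different route from the paper for the Lipschitz bound. You lift $h$ to a function $g\in\mathbb{H}^a$ via Lemma~\ref{le:1a}, represent $g$ as a Fourier integral via Lemma~\ref{le:2a}, and then bound $|g(x)-g(x')|$ by pulling the difference of exponentials inside the integral, applying $|e^{i\alpha}-e^{i\beta}|\le|\alpha-\beta|$, and invoking Cauchy--Schwarz in $L_2(\mu_a)$. The paper instead stays entirely inside $\tilde{\mathbb{H}}^a$: it writes $|h(x)-h(x')|=|\langle h,K^a(x,\cdot)-K^a(x',\cdot)\rangle_{\tilde{\mathbb{H}}^a}|\le\|K^a(x,\cdot)-K^a(x',\cdot)\|_{\tilde{\mathbb{H}}^a}=\sqrt{2(1-K^a(x,x'))}$, and then bounds $2(1-K^a(x,x'))$ by $\|x-x'\|^2\int\|\lambda\|^2\,\mu_a(d\lambda)$ using the spectral representation of the \emph{kernel} (and symmetry of $\mu_a$, so the linear term vanishes). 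The paper's argument is marginally more self-contained since it never needs the restriction/extension machinery of Lemma~\ref{le:1a} or the functional representation of Lemma~\ref{le:2a}; your version naturally re-uses the infrastructure those lemmas set up. Both hinge on the same scaling identity $\int\|\lambda\|^2\mu_a(d\lambda)=a^2\tau^2$, which you verify correctly. Either way the conclusion follows.
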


\subsection{Posterior Contraction Rate of GP on Manifold}
By using the manifold adapted lemmas in section 3.3 to 3.4, the proofs of Theorem \ref{thm:main1} and Corollary \ref{thm:main1b} follow similar ideas as the proof of Theorem 3.1 in \cite{Van2009} and are provided in the appendix.

\section{Proofs}\label{se:tp}
In this section, we provide technical proofs for the results in the paper.

\subsection{Proof of lemma \ref{le:1a}}
Consider the map $\Phi:\tilde{\mathcal{H}}\rightarrow \mathcal{H}$ that maps the function $$\sum_{i=1}^{m}a_iK^a(x_i,\cdot)\in \tilde{\mathcal{H}},\ a_1,\ldots,a_m\in \bbR, x_1,\ldots,x_m\in\mathcal{M}, m\in\mathds{N}$$ on $\mathcal{M}$ to
the function of the same form $$\sum_{i=1}^{m}a_iK^a(x_i,\cdot)\in\mathcal{H},$$ but viewed as a function on $[0,1]^D$. By definitions of RKHS norms, $\Phi$ is an isometry between $\tilde{\mathcal{H}}$ and a linear subspace of $\mathcal{H}$. As a result, $\Phi$ can be extended to an isometry between $\tilde{\mathbb{H}}^{a}$ and a complete subspace of $\mathbb{H}^{a}$. To prove the first part of this lemma, it suffices to justify that for any $f\in \tilde{\mathbb{H}}^{a}$, $g=\Phi(f)|_\mathcal{M}=f$. Assume that the sequence $\{f_n\}\in\tilde{\mathcal{H}}$ satisfies $$||f_n-f||_{\tilde{\mathbb{H}}^{a}}\rightarrow0, \text{ as } n\rightarrow\infty,$$ then by the definition of $\Phi$ on $\tilde{\mathcal{H}}$, $\Phi(f_n)|_\mathcal{M}=f_n$. For any $x\in [0,1]^D$, by the reproducing property and Cauchy-Schwarz inequality,
\begin{eqnarray*}
% \nonumber to remove numbering (before each equation)
|\Phi(f_n)(x)-g(x)|&=&|\langle K^a(x,\cdot), \Phi(f_n)-g\rangle_{\mathbb{H}^{a}}|\\
&\leq&\sqrt{K^a(x,x)}\ ||\Phi(f_n)-\Phi(f)||_{\mathbb{H}^{a}}\\
&=&||f_n-f||_{\tilde{\mathbb{H}^{a}}}\rightarrow0, \text{ as }n\rightarrow\infty,
\end{eqnarray*}
where the last step is by isometry. This indicates that $g$ can be obtained as a point limit of $\Phi(f_n)$ on $[0,1]^D$ and in the special case when $x\in \mathcal{M}$, $$g(x)=\lim_{n\rightarrow\infty}\Phi(f_n)(x)=\lim_{n\rightarrow\infty}f_n(x)=f(x).$$
Denote the orthogonal complement of $\Phi(\tilde{\mathbb{H}}^{a})$ in $\mathbb{H}^{a}$ as $\Phi(\tilde{\mathbb{H}}^{a})^{\perp}$. Since $(g'-g)|_{\mathcal{M}}=0$, which means $\langle K^a(x,\cdot), g-g'\rangle_{\mathbb{H}^{a}}=0$ for all $x\in \mathcal{M}$. Therefore by the previous construction, $g-g'\perp \Phi(\tilde{\mathbb{H}}^{a})$, i.e. $g'-g\in\Phi(\tilde{\mathbb{H}}^{a})^{\perp}$ and using Pythagorean theorem, we have
\[
||g'||_{\mathbb{H}^{a}}^2=||g||_{\mathbb{H}^{a}}^2+||g-g'||_{\mathbb{H}^{a}}^2\geq ||g||^2_{\mathbb{H}^{a}}.
\]

\subsection{Proof of Lemma \ref{le:7}}
The proof consists of two parts. In the first part, we prove that the approximation error of $I_a(f)$ can be decomposed into four terms. The first term $T_1$ is the convolution error defined in our previous heuristic argument. The second term $T_2$ is caused by localization of the integration, which is negligible due to the exponential decaying of the squared exponential covariance function. The third and fourth terms $T_3$, $T_4$ correspond to the non-flat error, with $T_3$ caused by approximating the geodesic distance with Euclidean distance $\big|||\phi^q(u)-q||^2-||u||^2\big|$, and $T_4$ by approximating the Jacobian $\big|\sqrt{\det(g^{\phi}_{ij}(u))}-1\big|$. Therefore the overall approximation error $|I_a(f)(x)-f(x)|$ has order $s$ in the sense that for some constant $C>0$ dependent on $\mathcal{M}$ and $f$:
\begin{equation}\label{eq:7b}
\sup_{x\in \mathcal{M}}|I_a(f)(x)-f(x)|\leq Ca^{-s},\ s\leq\min\{2,\gamma\}.
\end{equation}
In the second part, we prove that $I_a(f)$ belongs to $\tilde{\mathbb{H}}^{a}$ and has a squared RKHS norm:
\[
||I_a(f)||_{\tilde{\mathbb{H}}^{a}}^2\leq Ba^d,
\]
where $B$ is a positive constant not dependent on $a$.

\emph{Step 1 (Estimation of the approximation error):} This part follows similar ideas as in the proof of Theorem 1 in \cite{Ye2008}, where they have shown that (\ref{eq:7b}) holds for $s\leq 1$. Our proof generalizes their results to $s\leq2$ and therefore needs more careful estimations.

By Proposition 7.5 in the appendix, for each $p\in \mathcal{M}$, there exists a neighborhood $W_p$ and an associated $\delta_p$ satisfying the two conditions in Proposition 7.4 and equations (7.4)-7(.6) in the appendix. By compactness, $\mathcal{M}$ can be covered by $\cup_{p\in\mathcal{P}}W_p$ for a finite subset $\mathcal{P}$ of $\mathcal{M}$. Then $\sup_{x\in \mathcal{M}}|I_a(f)(x)-f(x)|=\sup_{p\in\mathcal{P}}\{\sup_{x\in W_p}|I_a(f)(x)-f(x)|\}$.  Let $\delta^*=\min_{p\in\mathcal{P}}\{\min\{\delta_p,1/\sqrt{2C_p}\}\}>0$, where $C_p$ is defined as in equation (7.6) in the appendix. Choose $a_0\geq1$ sufficiently large such that $C_0\sqrt{(2d+8)\log a_0}/a_0<\delta^*$, where $C_0$ is the $C_2$ in Lemma 7.6 in the appendix.

Let $q\in W_p$ and $a>a_0$. Define $B_a^q=\big\{x\in \mathcal{M}:d_{\mathcal{M}}(q,x)<C_0$ $\sqrt{(2d+8)\log a}/a\big\}$. Combining equation (7.3) in the appendix and the fact that $\mathcal{E}_q$ is a diffeomorphism on $B_{\delta^*}(0)$,
\[
B_a^q=\big\{\mathcal{E}_q(\sum_{i=1}^du_ie_i^q):u\in\tilde{B}_a \big\}\subset\mathcal{E}_q(B_{\delta^*}(0)),
\]
where $\tilde{B}_a=\big\{u:||u||<C_0\sqrt{(2d+8)\log a}/a\big\}\subset B_{\delta^*}(0)$.

Denote $\phi^q(u)=\mathcal{E}_q(\sum_{i=1}^du_ie^q_i)$. Then $B^q_a=\phi^q(\tilde{B}_a)$. By definition (7.1) in the appendix,
\begin{align*}
\lefteqn{\bigg(\frac{a}{\sqrt{2\pi}}\bigg)^d\int_{B_a^q}K^a(q,y)f(y)dV(y)}\\
&=\bigg(\frac{a}{\sqrt{2\pi}}\bigg)^d\int_{\tilde{B}_a}\exp\bigg\{-\frac{a^2||q-\phi^q(u)||^2}{2}\bigg\}f(\phi^q(u))
    \sqrt{\det(g_{ij}^q)}(u)du.
\end{align*}
Therefore, by \eqref{eq:6b} we have the following decomposition:
\[
I_a(f)(q)-f(q)=T_1+T_2+T_3+T_4,
\]
where
\begin{align*}
T_1=&\bigg(\frac{a}{\sqrt{2\pi}}\bigg)^d\int_{\tilde{B}_a}\exp\bigg\{-\frac{a^2||u||^2}{2}\bigg\}\big[f\big(\phi^q(u)\big)-f\big(\phi^q(0)\big)\big]du\\
T_2=&\bigg(\frac{a}{\sqrt{2\pi}}\bigg)^d\int_{\mathcal{M}\backslash B_a^q}K^a(q,y)f(y)dV(y)\\
&\qquad\qquad\qquad\qquad-
\bigg(\frac{a}{\sqrt{2\pi}}\bigg)^d\int_{\bbR^d\backslash\tilde{B}_a}\exp\bigg\{-\frac{a^2||u||^2}{2}\bigg\}f(q)du,\\
T_3=&\bigg(\frac{a}{\sqrt{2\pi}}\bigg)^d\int_{\tilde{B}_a}\bigg\{\exp\bigg\{-\frac{a^2||q-\phi^q(u)||^2}{2}\bigg\}\\
&\qquad\qquad\qquad\qquad\qquad-\exp\bigg\{-\frac{a^2||u||^2}{2}\bigg\}\bigg\}f(\phi^q(u))du,\\
T_4=&\bigg(\frac{a}{\sqrt{2\pi}}\bigg)^d\int_{\tilde{B}_a}\exp\bigg\{-\frac{a^2||q-\phi^q(u)||^2}{2}\bigg\}f(\phi^q(u))
    (\sqrt{\det(g_{ij}^q)}(u)-1)du.
\end{align*}

\emph{Step 1.1 (Estimation of $T_1$):}
Let $g=f\circ \phi^q$. Since $f\in C^{s}(\mathcal{M})$ and $(\phi^q,B_{\delta^*}(0))$ is a $C^{\gamma}$ coordinate chart, we have $g\in C^{s}(\bbR^d)$ and therefore
\[
g(u)-g(0)=\left\{
            \begin{array}{lr}
               R(u,s), &  \text{if } 0<s\leq \min\{1,\gamma\},\\
               \sum_{i=1}^d\frac{\partial g}{\partial u_i}(0)u_i+R(u,s), &  \text{if } 1<s\leq\min\{2,\gamma\},
            \end{array}
          \right.
\]
where the remainder term $|R(u,s)|\leq C_1||u||^s$ for all $0<s\leq\min\{2,\gamma\}$.
Since $\tilde{B}_a$ is symmetric,
\[
\int_{\tilde{B}_a}\exp\bigg\{-\frac{a^2||u||^2}{2}\bigg\}u_idu=0,\quad i=1,\ldots,d,
\]
and therefore
\[
|T_1|\leq C_1\bigg(\frac{a}{\sqrt{2\pi}}\bigg)^d\int_{\tilde{B}_a}\exp\bigg\{-\frac{a^2||u||^2}{2}\bigg\}||u||^sdu=C_2a^{-s}.
\]

\emph{Step 1.2 (Estimation of $T_2$):}
Denote $T_2=S_1+S_2$ where $S_1$ and $S_2$ are the first term and second term of $T_2$, respectively. By Lemma 7.6 in the appendix, for $y\in\mathcal{M}\backslash B_a^q$, $||q-y||\geq d_{\mathcal{M}}(q,y)/C_0\geq \sqrt{(2d+8)\log a}/a$. Therefore,
\begin{align*}
|S_1|&=\bigg|\bigg(\frac{a}{\sqrt{2\pi}}\bigg)^d\int_{\mathcal{M}\backslash B_a^q}\exp\bigg\{-\frac{a^2||q-y||^2}{2}\bigg\}f(y)dV(y)\bigg|\\
    &\leq||f||_{\infty}\text{Vol}(\mathcal{M})\bigg(\frac{a}{\sqrt{2\pi}}\bigg)^d\exp\bigg\{-\frac{(2d+8)\log a}{2}\bigg\}\\
    &=C_3a^{-4}\leq C_3a^{-s}.
\end{align*}
As for $S_2$, we have
\begin{align*}
|S_2|&\leq||f||_{\infty} \bigg(\frac{a}{\sqrt{2\pi}}\bigg)^d\int_{||u||\geq C_0\sqrt{(2d+8)\log a}/a}\exp\bigg\{-\frac{a^2||u||^2}{2}\bigg\}du\\
&\leq||f||_{\infty} \bigg(\frac{a}{\sqrt{2\pi}}\bigg)^d\int_{\bbR^d}\exp\bigg\{-\frac{C_0^2(2d+8)\log a}{4}\bigg\}\exp\bigg\{-\frac{a^2||u||^2}{4}\bigg\}du\\
&=C_4a^{-C_0^2(d/2+2)}\leq C_4a^{-s},
\end{align*}
since $d\geq1$, $C_0\geq1$ and $a\geq a_0\geq1$.

Combining the above inequalities for $S_1$ and $S_2$, we obtain
\begin{align*}
&|T_2|\leq (C_3+C_4)a^{-s}=C_5a^{-s}.
\end{align*}

\emph{Step 1.3 (Estimation of $T_3$):}
By equation (7.6) in Proposition 7.5 and equation (7.3) in the appendix, we have
\begin{equation}\label{eq:8b}
\begin{aligned}
\big|||u||^2-||q-\phi^q(u)||^2\big|&=\big|d^2_{\mathcal{M}}(q,\phi^q(u))-||q-\phi^q(u)||^2\big|\\
&\leq C_pd^4_{\mathcal{M}}(q,\phi^q(u))=C_p||u||^4.
\end{aligned}
\end{equation}
Therefore by using the inequality $|e^{-a}-e^{-b}|\leq|a-b|\max\{e^{-a},e^{-b}\}$ for $a,b>0$, we have
\begin{align*}
|T_3|\leq||f||_{\infty}\bigg(\frac{a}{\sqrt{2\pi}}\bigg)^d&\int_{\tilde{B}_a}\max\bigg\{\exp\bigg\{-\frac{a^2||q-\phi^q(u)||^2}{2}\bigg\}
,\\
&\exp\bigg\{-\frac{a^2||u||^2}{2}\bigg\}\bigg\}\frac{a^2||u||^4}{2}du.
\end{align*}
By equation (\ref{eq:8b}) and the fact that $u\in \tilde{B}_a$, $||u||^2\leq(\delta^*)^2\leq1/(2C_p)$ and hence
\begin{align}
 &\big|||u||^2-||q-\phi^q(u)||^2\big|\leq \frac{1}{2}||u||^2,\quad ||q-\phi^q(u)||^2\geq \frac{1}{2}||u||^2.\label{eq:9b}
\end{align}
Therefore
\[
|T_3|\leq||f||_{\infty}\bigg(\frac{a}{\sqrt{2\pi}}\bigg)^d\int_{\tilde{B}_a}\exp\bigg\{-\frac{a^2||u||^2}{4}\bigg\}
\frac{a^2||u||^4}{2}du=C_6a^{-2}\leq C_6a^{-s},
\]
since $a\geq a_0\geq1$.

\emph{Step 1.4 (Estimation of $T_4$):}
By equation (7.5) in Proposition 7.5 in the appendix, there exists a constant $C_7$ depending on the Ricci tensor of the manifold $\mathcal{M}$, such that
\[
\big|\sqrt{\det(g_{ij}^q)}(u)-1\big|\leq C_7||u||^2.
\]
Therefore, by applying equation \eqref{eq:9b} again, we obtain
\[
|T_4|\leq C_4||f||_{\infty}\bigg(\frac{a}{\sqrt{2\pi}}\bigg)^d\int_{\tilde{B}_a}\exp\bigg\{-\frac{a^2||u||^2}{4}\bigg\}
||u||^2du=C_8a^{-2}\leq C_8a^{-s}.
\]

Combining the above estimates for $T_1$, $T_2$, $T_3$ and $T_4$, we have
\[
\sup_{x\in\mathcal{M}}|I_a(f)(q)(x)-f(q)(x)|\leq (C_2+C_3+C_6+C_8)a^{-s}=Ca^{-s}.
\]

\emph{Step 2 (Estimation of the RKHS norm):}
Since $\langle K^a(x,\cdot), K^a(y,\cdot)\rangle_{\tilde{\mathbb{H}}^{a}}=K^a(x,y)$, we have
\begin{align*}
||I_a(f)||_{\tilde{\mathbb{H}}^{a}}&=\bigg(\frac{a}{\sqrt{2\pi}}\bigg)^{2d}\int_{\mathcal{M}}\int_{\mathcal{M}}
K^a(x,y)f(x)f(y)dV(x)dV(y)\\
&\leq||f||_{\infty}^2\bigg(\frac{a}{\sqrt{2\pi}}\bigg)^{d}\int_{\mathcal{M}}dV(x)\bigg(\frac{a}{\sqrt{2\pi}}\bigg)^{d}\int_{\mathcal{M}}K^a(x,y)dV(y).
\end{align*}
Applying the results of the first part to function $f\equiv1$, we have
\[
\bigg|\bigg(\frac{a}{\sqrt{2\pi}}\bigg)^{d}\int_{\mathcal{M}}K^a(x,y)dV(y)-1\bigg|\leq Ca^{-2}\leq C,
\]
since $a\geq a_0\geq1$. Therefore,
\[
||I_a(f)||_{\tilde{\mathbb{H}}^{a}}\leq(1+C)||f||_{\infty}^2\bigg(\frac{a}{\sqrt{2\pi}}\bigg)^{d}\text{Vol}(\mathcal{M})=Ba^d.
\]

\subsection{Proof of Lemma \ref{le:3a}}
By Lemma \ref{le:1a} and Lemma \ref{le:2a}, a typical element of $\tilde{\mathbb{H}}^{a}$ can be written as the real part of the function
$$h_{\psi}(x)=\int e^{i(\lambda,x)}\psi(\lambda)\mu_a(d\lambda),\text{ for } x\in\mathcal{M}$$
for $\psi:\bbR^D\rightarrow\mathds{C}$ a function with $\int|\psi|^2\mu_a(d\lambda)\leq1$. This function can be extended to $\bbR^D$ by allowing $x\in\bbR^D$. For any given point $p\in\mathcal{M}$, by (7.2) in the appendix, we have a local coordinate $\phi^p:B_{\delta_0}(0)\subset \bbR^d\rightarrow \bbR^D$ induced by the exponential map $\mathcal{E}_p$. Therefore, for $x\in\phi_p(B_{\delta_0}(0))$, $h_{\psi}(x)$ can be written in local $q$-normal coordinates as
\begin{equation}\label{eq:rkhs}
    h_{\psi,p}(u)=h_{\psi}\big(\phi^p(u)\big)=
    \int e^{i(\lambda,\phi^p(u))}\psi(\lambda)\mu_a(d\lambda),\ u\in B_{\delta_0}(0).
\end{equation}

Similar to the idea in the proof of Lemma 4.5 in \cite{Van2009}, we want to extend the function $h_{\psi,p}$ to an analytical function $z\mapsto \int e^{i(\lambda,\phi^p(z))}\psi(\lambda)\mu_a(d\lambda)$ on the set $\Omega= \{z\in\mathds{C}^d:||\text{Re}z||<\delta_0,||\text{Im} z||<\rho/a\}$ for some $\rho>0$. Then we can obtain upper bounds on the mixed partial derivatives of the analytic extension $h_{\psi,p}$ via Cauchy formula, and finally construct an $\epsilon$-covering set of $\tilde{\mathbb{H}}^{a}_1$ by piecewise polynomials defined on $\mathcal{M}$.
Unfortunately, this analytical extension is impossible unless $\phi^p(u)$ is a polynomial. This motivates us to approximate $\phi^p(u)$ by its $\gamma$th order Taylor polynomial $P_{p,\gamma}(u)$. More specifically, by Lemma \ref{le:6a} and the discussion after Lemma 7.7 in the appendix, the error caused by approximating $\phi^p(u)$ by $P_{p,\gamma}(u)$ is
\begin{align}\label{eq:aerr}
    \big|h_{\psi}\big(\phi^p(u)\big)-h_{\psi}\big(P_{p,\gamma}(u)\big)\big|
    \leq a||\phi^p(u)-P_{p,\gamma}(u)||\leq Ca||u||^{\gamma}.
\end{align}
For notation simplicity, fix $p$ as a center and denote the function $h_{\psi}\big(P_{p,\gamma}(u)\big)$ by $r(u)$ for $u\in B_{\delta_0}$. Since $P_{p,\gamma}(u)$ is a polynomial of degree $\gamma$, view the function $r$ as a function of argument $u$ ranging over the product of the imaginary axes in $\mathds{C}^d$, we can extend
\begin{align}
    r(u)=\int e^{i(\lambda,P_{p,\gamma}(u))}\psi(\lambda)\mu_a(d\lambda), \ u\in B_{\delta_0}(0)\label{eq:ru}
\end{align}
to an analytical function $z\mapsto \int e^{i(\lambda,P_{p,\gamma}(z))}\psi(\lambda)\mu_a(d\lambda)$ on the set $\Omega= \{z\in\mathds{C}^d:||\text{Re}z||<\delta_0,||\text{Im} z||<\rho/a\}$ for some $\rho>0$ sufficiently small determined by the $\delta<1/2$ in (\ref{eq:1}). Moreover, by Cauchy-Schwarz inequality, $|r(z)|\leq C$ for $z\in\Omega$ and $C^2=\int e^{\delta||\lambda||}\mu(d\lambda)$. Therefore, by Cauchy formula, with $D^n$ denoting the partial derivative of orders $n=(n_1,\ldots,n_d)$ and $n!=n_1!\cdots n_d!$, we have the following bound for partial derivatives of $r$ at any $u\in B_{\delta_0}(0)$,
\begin{align}\label{eq:dub}
 \bigg|\frac{D^nr(u)}{n!}\bigg|\leq \frac{C}{R^n},
\end{align}
where $R=\rho/(a\sqrt{d})$. Based on the inequalities \eqref{eq:aerr} and \eqref{eq:dub}, we can construct an $\epsilon$-covering set of $\tilde{\mathbb{H}}^{a}_1$ as follows.

Set $a_0=\rho/(2\delta_0\sqrt{d})$, then $R<2\delta_0$. Since $\mathcal{M}\subset [0,1]^D$, with $C_2$ defined in Lemma 7.6 in the appendix, let $\{p_1,\ldots,p_m\}$ be an $R/(2C_2)$-net in $\mathcal{M}$ for the Euclidean distance, and let
$\mathcal{M}=\bigcup_iB_i$ be a partition of $\mathcal{M}$ in sets $B_1,\ldots,B_m$ obtaining by assigning every $x\in\mathcal{M}$ to the closest $p_i\in\{p_1,\ldots,p_m\}$. By (6.3) and Lemma 7.6 in the appendix
\begin{equation}\label{eq:difb}
    |(\phi^{p_i})^{-1}(x)|<C_2\frac{R}{2C_2}=\frac{R}{2}<\delta_0,
\end{equation}
where $\phi_{p_i}$ is the local normal coordinate chart at $p_i$. Therefore, we can consider the piecewise transformed polynomials $P=\sum_{i=1}^mP_{i,a_i}1_{B_i}$, with
\begin{align}
P_{i,a_i}(x)=\sum_{n_.\leq k}a_{i,n}[(\phi^{p_i})^{-1}(x)]^n, \ x\in \phi^{p_i}\big(B_{\delta_0}(0)\big).\label{eq:pp}
\end{align}
Here the sum ranges over all multi-index vectors $n=(n_1,\ldots,n_d)\in (\mathds{N}\cup\{0\})^d$ with $n_.=n_1+\cdots+n_d\leq k$. Moreover, for $y=(y_1,\ldots,y_d)\in \bbR^d$, the notation $y^n$ used above is short for $y_1^{n_1}y_2^{n_2}\cdots y_d^{n_d}$. We obtain a finite set of functions by discretizing the coefficients $a_{i,n}$ for each $i$ and $n$ over a grid of meshwidth $\epsilon/R^n$-net in the interval $[-C/R^n,C/R^n]$ (by \eqref{eq:dub}). The log cardinality of this set is bounded by
\[
\log \bigg(\prod_i\prod_{n:n_.\leq k}\#a_{i,n}\bigg)\leq m\log\bigg(\prod_{n:n_.\leq k}\frac{2C/R^n}{\epsilon/R^n}\bigg)\leq mk^d\log\bigg(\frac{2C}{\epsilon}\bigg).
\]
Since $R=\rho/(a\sqrt{d})$, we can choose $m=N\big(\mathcal{M},||\cdot||,\rho/(2C_0ad^{1/2})\big)\simeq a^d$. To complete the proof, it suffices to show that for $k$ of order $\log(1/\epsilon)$, the resulting set of functions is a $K\epsilon$-net for constant $K$ depending only on $\mu$.

For any function $f\in\tilde{\mathbb{H}}^a_1$, by Lemma \ref{le:1a}, we can find a $g\in \tilde{\mathbb{H}}^a_1$ such that $g|_{\mathcal{M}}=f$.
Assume that $r_g$ (the subcript $g$ indicates the dependence on $g$) is the local polynomial approximation for $g$ defined as \eqref{eq:ru}. Then we have a partial derivative bound on $r_g$ as:
\[
\bigg|\frac{D^nr_g(p_i)}{n!}\bigg|\leq \frac{C}{R^n}.
\]
Therefore there exists a universal constant $K$ and appropriately chosen $a_i$ in \eqref{eq:pp}, such that for any $z\in B_i\subset \mathcal{M}$,
\[
\bigg|\sum_{n_.>k}\frac{D^nr_g(p_i)}{n!}(z-p_i)^n\bigg|\leq\sum_{n_.>k}\frac{C}{R^n}(R/2)^n\leq C\sum_{l=k+1}^{\infty}\frac{l^{d-1}}{2^l}\leq KC\bigg(\frac{2}{3}\bigg)^k,
\]
\[
\bigg|\sum_{n_.\leq k}\frac{D^nr_g(p_i)}{n!}(z-p_i)^n-P_{i,n_i}(z)\bigg|\leq\sum_{n_.\leq k}\frac{\epsilon}{R^n}(R/2)^n\leq\sum_{l=1}^{k}\frac{l^{d-1}}{2^l}\epsilon\leq K\epsilon.
\]
Moreover, by \eqref{eq:aerr} and \eqref{eq:difb},
\[
|g(z)-r_g(z)|\leq Ca||(\phi^{p_i})^{-1}(z)||^{\gamma}\leq a R^{\gamma}\leq K a^{-(\gamma-1)}<K\epsilon,
\]
where the last step follows by the condition on $a$.

Consequently, we obtain
\begin{align*}
  |f(z)-P_{i,n_i}(z)|=&|g(z)-P_{i,n_i}(z)|\leq |g(z)-r_g(z)|+|r_g(z)-P_{i,n_i}(z)|\\
  \leq &KC\bigg(\frac{2}{3}\bigg)^k+2K\epsilon.
\end{align*}
This suggests that the piecewise polynomials form a $3K\epsilon$-net for $k$ sufficiently large so that $(2/3)^k$ is smaller than $K\epsilon$.

\subsection{Proof of Lemma \ref{le:5a}}
For any $f\in \sqrt{a}\tilde{\mathbb{H}}^{a}_1$, by Lemma \ref{le:1a}, there exists $g\in \sqrt{a}\mathbb{H}^{a}_1$ such that $g|_{\mathcal{M}}=f$. By Lemma 4.7 in
\cite{Van2009}, $\sqrt{a}\mathbb{H}^{a}_1\subset \sqrt{b}\mathbb{H}^{b}_1$, so $g\in \sqrt{b}\mathbb{H}^{b}_1$. Again by Lemma \ref{le:1a}, since $g|_{\mathcal{M}}=f$, $||f||_{\tilde{\mathbb{H}}^{b}}\leq||g||_{\mathbb{H}^{b}}\leq\sqrt{b}$, implying that $f\in \sqrt{b}\tilde{\mathbb{H}}^{b}_1$.

\subsection{Proof of Lemma \ref{le:6a}}
By the reproducing property and Cauchy-Schwarz inequality
\begin{align*}
 |h(x)|&=|\langle h, K^a(x,\cdot)\rangle_{\tilde{\mathbb{H}}^{a}}|\leq||K^a(x,\cdot)||_{\tilde{\mathbb{H}}^{a}}=1\\
 |h(x)-h(x')|&=|\langle h, K^a(x,\cdot)-K^a(x',\cdot)\rangle_{\tilde{\mathbb{H}}^{a}}|\\
 &\leq ||K^a(x,\cdot)-K^a(x',\cdot)||_{\tilde{\mathbb{H}}^{a}}\\
 &=\sqrt{2(1-K^a(x,x'))}.
\end{align*}
By the spectral representation $K(x,x')=\int e^{i(\lambda,t)}\mu_a(d\lambda)$ and the fact that $\mu_a$ is symmetric,
\begin{align*}
2(1-K^a(x,x'))&=2\int(1+i(\lambda,x-x')-e^{i(\lambda,x-x')})\mu_a(d\lambda)\\
&\leq ||x-x'||^2\int||\lambda||^2\mu_a(d\lambda)\\
&=a^2||x-x'||^2\int||\lambda||^2\mu(d\lambda).
\end{align*}

\subsection{Proof of Theorem \ref{le:BEr}}
First, we consider a fixed-designed case.
According to the proof of Theorem \eqref{thm:main1}, for any $t\geq 1$, there exists a sequence of sieves $\{B_{t,n}:n\geq1\}$ such that
\begin{align*}
    P(W^A\notin B_{t,n})\leq e^{-4n\epsilon_n^2t^2}\text{ and }
    \log N(t\bar{\epsilon}_n,B_{t,n},\|\cdot\|_{\infty})\leq n\bar{\epsilon}_n^2t^2.
\end{align*}
As a consequence, by borrowing some results in the proof of Theorem 2.1 in \cite{Ghosal2000}, it can be shown that there exists a sequence of measurable sets $A_n$ with $P_0(A_n^c)\to 0$, such that for some positive constant $c$ and any $t\geq1$,
\begin{align*}
    E_0 \ I(A_n)\ \Pi(\|f-f_0\|_n\geq t\bar{\epsilon}_n|S_n)\leq e^{-cn\epsilon_n^2t^2}.
\end{align*}
By plugging in $t=1,2,\ldots$ into the above display, dividing both sides by $\exp\{-cn\epsilon_n^2t^2/2\}$ and taking a summation, we can obtain
\begin{align*}
    E_0 \ I(A_n)\sum_{k=1}^{\infty}\Pi(\|f-f_0\|_n\geq k\bar{\epsilon}_n|S_n)\ e^{cn\epsilon_n^2k^2/2}\leq \sum_{k=1}^{\infty}e^{-cn\epsilon_n^2k^2/2}\to 0,
\end{align*}
as $n\to\infty$. As a consequence, there exists a sequence of sets $\{S_n\}$ with $P_0(S_n)\to 1$ as $n\to \infty$, such that for any $S_n\in S_n$ the following inequality holds uniformly for all $t\geq 1$ for some constant $c_0>0$:
\begin{align}\label{eq:eb}
    \Pi(\|f-f_0\|_n\geq t\bar{\epsilon}_n|S_n)\leq e^{-c_0n\epsilon_n^2t^2}.
\end{align}
Then for any $S_n\in S_n$, by Fubini's theorem we have
\begin{align*}
    \int \|f-f_0\|_n^2d\Pi(f|S_n)=&\ \int_{0}^{\infty}\Pi(\|f-f_0\|_n^2\geq s|S_n)ds\\
    \leq&\ \bar{\epsilon}_n^2+\int_{\bar{\epsilon}_n^2}^{\infty} \Pi(\|f-f_0\|_n^2\geq s|S_n)ds\\
    \leq&\ \bar{\epsilon}_n^2+\int_{\epsilon_n^2}^{\infty}e^{-c_0ns}ds\lesssim \bar{\epsilon}_n^2.
\end{align*}

Since $\hat{f}=\int f_Ad\Pi(f|S_n)$, we have decomposition $\int \|f_A-f_0\|_n^2d\Pi(f|S_n)=\int \|f_A-\hat{f}\|_n^2d\Pi(f|S_n)+\|\hat{f}-f_0\|_n^2$. Combining this decomposition with the fact that $|f_A(x)-f_0(x)|\leq |f(x)-f_0(x)|$ for any $x$, we have
\begin{align*}
    \|\hat{f}-f_0\|_n^2\leq \int \|f-f_0\|_n^2d\Pi(f|S_n).
\end{align*}
By the preceding displays, we can conclude that for all $S_n\in S_n$, $\|\hat{f}-f_0\|_n^2\lesssim \bar{\epsilon}_n^2$, which completes the proof for the fixed designed case.

The proof for a random-designed case is more involved. We will utilize the following result for comparing $\|\cdot\|_n$ and $\|\cdot\|_2$ based on empirical process theory \citep[Lemma 5.16]{geer2000}. Let $H_B(\epsilon,\mathcal{F},\|\cdot\|)$ denote the $\epsilon$-bracketing entropy of a function space $\mathcal{F}$ with respect to a norm $\|\cdot\|$.

\begin{lemma}\label{le:tnc}
Suppose $\sup_{f\in\mathcal{F}}\|f\|_{\infty}\leq A$.
For any $\delta$ satisfying $n\delta^2\geq H_B(\delta,\mathcal{F},\|\cdot\|_2)$ and $\eta\in(0,1)$, we have
\begin{align*}
    P_0\bigg(\sup_{f\in\mathcal{F},\ \|f\|_2\geq 2^5\delta/\eta}\bigg|
    \frac{\|f\|_n}{\|f\|_2}-1\bigg|\geq \eta\bigg)\leq 8\exp(-Cn\delta^2\eta^2),
\end{align*}
where the constant $C>0$ only depends on $A$.
\end{lemma}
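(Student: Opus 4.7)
The plan is to reduce the ratio $\|f\|_n/\|f\|_2$ to the empirical process $\nu_n(f):=\|f\|_n^2-\|f\|_2^2$ and then run a standard peeling + bracketing + Bernstein argument on it. The elementary identity $|\|f\|_n-\|f\|_2|\cdot(\|f\|_n+\|f\|_2)=|\nu_n(f)|$ together with $\|f\|_n+\|f\|_2\geq\|f\|_2$ gives $|\|f\|_n/\|f\|_2-1|\leq|\nu_n(f)|/\|f\|_2^{2}$, so it suffices to bound $P_0\big(\sup_{f\in\mathcal{F},\,\|f\|_2\geq R}|\nu_n(f)|/\|f\|_2^{2}\geq\eta\big)$ with $R=2^5\delta/\eta$.

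To control this supremum I would peel: for $s=0,1,2,\ldots$ set $\mathcal{F}_s=\{f\in\mathcal{F}:2^sR\leq\|f\|_2<2^{s+1}R\}$, so that on $\mathcal{F}_s$ the bad event forces $|\nu_n(f)|\geq\eta\cdot 4^sR^2$. I would cover each $\mathcal{F}_s$ by at most $\exp H_B(\delta,\mathcal{F},\|\cdot\|_2)\leq e^{n\delta^2}$ brackets $[f_j^L,f_j^U]$ of $\|\cdot\|_2$-width $\delta$. Because $\|f\|_\infty\leq A$, the identity $(f^U)^2-(f^L)^2=(f^U+f^L)(f^U-f^L)$ shows the induced bracket for $f^2$ has $L^1(G)$-width at most $2A\delta$, so $\nu_n$ varies by at most $O(A\delta)$ within any bracket, which is negligible compared to $\eta\cdot 4^sR^2$ once $R=2^5\delta/\eta$ is plugged in.

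For each bracket centre $f_j$, the variables $f_j^2(X_i)$ lie in $[0,A^2]$ and have variance at most $A^2\|f_j\|_2^2\leq A^2\cdot 4^{s+1}R^2$, so Bernstein's inequality yields
\begin{align*}
P_0\!\left(|\nu_n(f_j)|\geq \tfrac{\eta}{2}\cdot 4^sR^2\right)\leq 2\exp\!\left(-\frac{c\,n\,\eta^{2} 4^{s}R^{2}}{A^{2}}\right),
\end{align*}
where the variance term dominates the bounded-range term because $\eta\leq 1$. Substituting $R^2=2^{10}\delta^2/\eta^2$ turns the exponent into $-c'\,n\delta^2\cdot 4^s$, which by the hypothesis $n\delta^2\geq H_B(\delta,\mathcal{F},\|\cdot\|_2)$ comfortably dominates the entropy term in a union bound over brackets. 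A geometric sum over $s\geq 0$ then produces the claimed bound $8\exp(-Cn\delta^2\eta^2)$ with $C$ depending only on $A$.

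The main obstacle will be keeping three balances straight simultaneously: (i) the bracket-approximation error $O(A\delta)$ for $f^2$ must stay below $\eta\cdot 4^sR^2/2$ in every shell, (ii) the Bernstein denominator must be driven by the variance rather than the $L^\infty$-bound (which uses $\eta\leq 1$), and (iii) the geometric factor $4^s$ in the exponent must overwhelm both the entropy contribution $n\delta^2$ and the summation over shells. All three are encoded in the specific choice $R=2^5\delta/\eta$ and the entropy condition $n\delta^2\geq H_B(\delta,\mathcal{F},\|\cdot\|_2)$, but tracking the numerical constants through the peeling argument is the only non-trivial bookkeeping.
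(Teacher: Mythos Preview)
The paper does not actually prove this lemma: it is quoted verbatim as ``Lemma 5.16 of \cite{geer2000}'' and used as a black box in the proof of Theorem~\ref{le:BEr}. So there is no in-paper proof to compare against; your proposal is an attempt to reconstruct the standard argument behind the cited result.

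Your strategy---peel on $\|f\|_2$, bracket $\mathcal{F}$ at scale $\delta$, pass to brackets for $f^2$ via $|f^2-g^2|\le 2A|f-g|$, apply Bernstein at each bracket endpoint, union-bound over brackets, and sum geometrically over shells---is exactly the standard route (and essentially how van~de~Geer proves it). One point you gloss over deserves a sentence of care: you write that ``$\nu_n$ varies by at most $O(A\delta)$ within any bracket,'' but the bracket width $\delta$ is an $L_2(G)$ bound on $f^U-f^L$, whereas the fluctuation of $\nu_n$ within a bracket involves $P_n(f^U-f^L)$, which is random. You need either to control $P_n(f^U-f^L)$ simultaneously over the (finitely many) brackets by another Bernstein/union-bound step, or to argue directly via the one-sided bounds $P_n(f^2)\le P_n\big((f^L)^2+2A(f^U-f^L)\big)$ and absorb the extra empirical term into the main deviation event. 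Either fix is routine and does not change the final exponent; just make the dependence explicit so that the constant $C$ visibly depends only on $A$.
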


According to Lemma 1 of \cite{Ghosal2007} and the proof of Theorem \eqref{thm:main1}, for all $S_n\in S_n$,
\begin{align*}
    \Pi(f\notin B_n|S_n)\leq e^{-cn\epsilon_n^2}\text{ and }\log N(3\bar{\epsilon}_n,B_n,\|\cdot\|_{\infty})\leq n\bar{\epsilon}_n^2.
\end{align*}
where $B_n$ is defined at the end of its proof.
Let $B_{n,A}=\{f_A:f\in B_n\}$. If $\{f^{(j)}\}$ forms an $\epsilon$-net of $B_n$, then $\{f^{(j)}_A\}$ forms an $\epsilon$-net of $B_{n,A}$. As a result, the covering entropy of $B_{n,A}$ is bounded by that of $B_n$. Combining this, \eqref{eq:eb} and the fact that an $\epsilon$-bracket entropy is always bounded by an $\epsilon$-covering entropy with respect to $\|\cdot\|_{\infty}$, we have for all $S_n\in S_n$,
\begin{align*}
    \Pi(\|f_A-&f_0\|_n\leq \bar{\epsilon}_n,f_A\in B_{n,A}|S_n)\geq 1-2e^{-cn\epsilon_n^2},\\
     &H_B(3\bar{\epsilon}_n,B_{n,A},\|\cdot\|_2)\leq n\bar{\epsilon}_n^2.
\end{align*}
Applying Lemma \ref{le:tnc} for $\mathcal{F}=B_{n,A}-f_0$, $\delta=\bar{\epsilon}_n$ and $\eta=1/2$, we have
a set $E_n$ with $P_0(E_n)\to 1$ as $n\to\infty$ such that for all $S_n\in E_n$,
\begin{align*}
    \frac{1}{2}\leq\sup_{f_A\in B_{n,A},\ \|f_A-f_0\|_2\geq 64\bar{\epsilon}_n}
    \frac{\|f_A-f_0\|_n}{\|f_A-f_0\|_2}\leq \frac{3}{2}.
\end{align*}
As a consequence, for all $S_n\in C_n\cap S_n$, we have
$\Pi(\|f_A-f_0\|_2\leq 64\bar{\epsilon}_n|S_n)\geq 1-2e^{-cn\epsilon_n^2}$ and
\begin{align*}
    \int \|f_A-f_0\|_2^2d\Pi(f|S_n)\leq 64^2\bar{\epsilon}_n^2+4A^2\Pi(\|f_A-f_0\|_2\geq 64\bar{\epsilon}_n|S_n)\lesssim \bar{\epsilon}_n^2.
\end{align*}
Therefore, we have $\|\hat{f}-f_0\|_2^2\leq \int \|f_A-f_0\|_2^2d\Pi(f|S_n)\lesssim \bar{\epsilon}_n^2$.

\subsubsection{Proof of Theorem \ref{thm:crv}}
The proof consists of two parts. In the first part, we show that for any $k=1,\ldots,d_{max}$, the associated MSPE $E_m^{(k)}=m^{-1}\sum_{i=1}^m\big(\hat{f}^{(k)}(\tilde{X}_i)-\tilde{Y}_i\big)^2$ on $\tilde{S}_m$ is a good estimator of $\|\hat{f}_k-f_0\|_2^2$ up to some fixed additive constant. The proof is an application of Bernstein's inequality. In the second part, we show that the estimator $\hat{f}_{CV}$ selected by cross validation can achieve an optimal convergence rate that is adaptive to the unknown dimensionality $d$. The proof borrows some results in the proof of Theorem \eqref{thm:main1}.

Step one: Let $\|\cdot\|_m$ denote the empirical $L_2$-norm on the testing set.
Since $\tilde{Y}_i=f_0(\tilde{X}_i)+\tilde{\epsilon}_i$, we can expand $E_m^{(k)}$ as
\begin{align}\label{eq:En}
    E_m^{(k)}=\|\hat{f}^{(k)}-f_0\|_m^2-\frac{2}{m}\sum_{i=1}^m\tilde{\epsilon}_i
    \big(\hat{f}^{(k)}(\tilde{X}_i)-f_0(\tilde{X}_i)\big)+\frac{1}{m}\sum_{i=1}^m\tilde{\epsilon}_i^2.
\end{align}
Since $\tilde{\epsilon}_i\overset{iid}{\sim} N(0,\sigma^2)$ are independent of $\hat{f}^{(k)}$ and $\tilde{X}_i$, the second term has a conditional distribution as normal with mean zero and variance $4\|\hat{f}^{(k)}-f_0\|_m^2/m$ conditioning on $\hat{f}^{(k)}$ and $\{\tilde{X}_i\}$. Therefore, we have for any $t>0$,
\begin{align}\label{eq:Gt}
    P_0\bigg(\bigg|\frac{2}{m}\sum_{i=1}^m\tilde{\epsilon}_i
    \big(\hat{f}^{(k)}(\tilde{X}_i)-f_0(\tilde{X}_i)\big)\bigg|\geq \|\hat{f}^{(k)}-f_0\|_m\frac{t}{\sqrt{m}}\bigg)\leq 2e^{-\frac{1}{8}t^2}.
\end{align}
Since $\hat{f}^{(k)}$ is independent of $\{\tilde{X}_i\}$ and $\|\hat{f}^{(k)}-f_0\|_{\infty}\leq 2A$, an application of Bernstein's inequality yields
\begin{align*}
    P_0\big(\big|\|\hat{f}^{(k)}-f_0\|_m^2-\|\hat{f}^{(k)}-f_0\|_2^2\big|\geq t\big)
    \leq 2\exp\bigg\{-\frac{mt^2}{8(Kt+\|\hat{f}^{(k)}-f_0\|_2^2)}\bigg\},
\end{align*}
for all $t>0$, where $K>0$ is a constant that only depends on $A$.
By choosing $t=\frac{1}{2}\|\hat{f}^{(k)}-f_0\|_2^2$, we have for some constant $C>0$,
\begin{align}\label{eq:ber}
    P_0\bigg(\frac{1}{2}\leq\frac{\|\hat{f}^{(k)}-f_0\|_m^2}{\|\hat{f}^{(k)}-f_0\|_2^2}
    \leq \frac{3}{2}\bigg)\geq 1-2\exp(-Cm\|\hat{f}^{(k)}-f_0\|_2^2).
\end{align}

By \eqref{eq:En}, \eqref{eq:Gt} and \eqref{eq:ber}, we have
\begin{align*}
    P_0\bigg(\frac{1}{2}\|\hat{f}^{(k)}-f_0\|_2^2+&\ \frac{1}{m}\sum_{i=1}^m\tilde{\epsilon}_i^2
    \leq E_m^{(k)} + \|\hat{f}^{(k)}-f_0\|_m\frac{t}{\sqrt{m}}\bigg)\\
    &\geq 1-
    2\exp\bigg\{-\frac{t^2}{8}\bigg\}-2\exp(-Cm\|\hat{f}^{(k)}-f_0\|_2^2),\\
    P_0\bigg(\frac{3}{2}\|\hat{f}^{(k)}-f_0\|_2^2+&\ \frac{1}{m}\sum_{i=1}^m\tilde{\epsilon}_i^2
    \geq E_m^{(k)} - \|\hat{f}^{(k)}-f_0\|_m\frac{t}{\sqrt{m}}\bigg)\\
    &\geq 1-
    2\exp\bigg\{-\frac{t^2}{8}\bigg\}-2\exp(-Cm\|\hat{f}^{(k)}-f_0\|_2^2).
\end{align*}
By choosing $t=\frac{1}{4}\sqrt{m}\|\hat{f}^{(k)}-f_0\|_2$ in the first inequality and $t=\frac{1}{2}\sqrt{m}\|\hat{f}^{(k)}-f_0\|_2$ in the second, we obtain the following key inequality
\begin{equation}\label{eq:cvkey}
\begin{aligned}
    P_0\bigg(\frac{1}{4}&\|\hat{f}^{(k)}-f_0\|_2^2+\frac{1}{m}\sum_{i=1}^m\tilde{\epsilon}_i^2
    \leq E_m^{(k)} \leq
    2\|\hat{f}^{(k)}-f_0\|_2^2+\frac{1}{m}\sum_{i=1}^m\tilde{\epsilon}_i^2\bigg)\\
    &\geq 1-8\exp(-Cm\|\hat{f}^{(k)}-f_0\|_2^2)\to 1\text{ as }n\to\infty.
\end{aligned}
\end{equation}

Step two: By the selection rule for $\hat{f}_{CV}=\hat{f}^{(\hat{d}_{CV})}$ and the assumption on $m$, we have that with probability tending to one,
\begin{align*}
    \frac{1}{4}\|\hat{f}_{CV}-f_0\|_2^2+\frac{1}{m}\sum_{i=1}^m\tilde{\epsilon}_i^2
    \leq E_m^{(\hat{d}_{CV})}
    \leq E_m^{(d)}\leq 2\|\hat{f}^{(d)}-f_0\|_2^2+\frac{1}{m}\sum_{i=1}^m\tilde{\epsilon}_i^2.
\end{align*}

By the preceding display and Theorem \ref{le:BEr} with the true dimension $d$, we can conclude that with probability tending to one,
\begin{align*}
    \|\hat{f}_{CV}-f_0\|_2\leq 2\sqrt{2}\|\hat{f}^{(d)}-f_0\|_2\lesssim \bar{\epsilon}_n\lesssim n^{-s/(2s+d)}(\log n)^{d+1}.
\end{align*}

\vskip 1em \centerline{\Large \bf Appendix} \vskip 1em

\section{Geometric Properties}

% Keep these details to a minimum & accessible to the general stats reader - we may have a longer background in a supplemental appendix.  Also great to make things more intuitive with a figure plotting a possible manifold in a simple case & illustrating some of the terminology.

We introduce some concepts and results in differential and Riemannian geometry, which play an important role in the convergence rate. For detailed definitions and notations, the reader is referred to \cite{doCarmo1992}.

\subsection{Riemannian Manifold}
A manifold is a topological space that locally resembles Euclidean space. A $d$-dimensional topological manifold $\mathcal{M}$ can be described using an atlas, where an atlas is defined as a collection $\{(U_{s},\phi_{s})\}$ such that $\mathcal{M}=\bigcup_s U_{s}=$ and each chart $\phi_{s}:V\rightarrow U_{s}$ is a homeomorphism from an open subset $V$ of $d$-dimensional Euclidean space to an open subset $U_{s}$ of $\mathcal{M}$. By constructing an atlas whose transition functions $\{\tau_{s,\beta}=\phi_{\beta}^{-1}\circ\phi_{s}\}$ are $C^{\gamma}$ differentiable, we can further introduce a differentiable structure on $\mathcal{M}$. With this differentiable structure, we are able to define differentiable functions and their smoothness level $s\leq \gamma$. Moreover, this additional structure allows us to extend Euclidean differential calculus to the manifold. To measure distances and angles on a manifold, the notion of Riemannian manifold is introduced. A Riemannian manifold $(\mathcal{M},g)$ is a differentiable manifold $\mathcal{M}$ in which each tangent space $T_p\mathcal{M}$ is equipped with an inner product $\langle\cdot,\cdot\rangle_p=g_p(\cdot,\cdot)$ that varies smoothly in $p$.
The family $g_p$ of inner products is called a Riemannian metric and is denoted by $g$. With this Riemannian metric $g$, a distance $d_{\mathcal{M}}(p,q)$ between any two points $p,q\in\mathcal{M}$ can be defined as the length of the shortest path on $\mathcal{M}$ connecting them. For a given manifold $\mathcal{M}$, such as the set $P(n)$ of all $n\times n$ positive symmetric matrices \citep{Moakher2011,Hiai2009}, a Riemannian metric $g$ is not uniquely determined and can be constructed in various manners so that certain desirable properties, such as transformation or group action invariability, are valid. Although $g$ is not uniquely determined, the smoothness of a given function $f$ on $\mathcal{M}$ only depends on $\mathcal{M}$'s differential structure instead of its Riemannian metric. Therefore, to study functions on the manifold $\mathcal{M}$, we could endow it with any valid Riemannian metric. Since a low dimensional manifold structure on the $\bbR^D$-valued predictor $X$ is assumed in this paper, we will focus on the case in which $\mathcal{M}$ is a submanifold of a Euclidean space.
\begin{definition}\label{def:1}
 $\mathcal{M}$ is called a $C^{\gamma}$ submanifold of $\bbR^D$ if there exists an inclusion map $\Phi:\mathcal{M}\mapsto \bbR^D$, called embedding, such that $\Phi$ is a diffeomorphism between $\mathcal{M}$ and  $\Phi(\mathcal{M})\subset\bbR^D$, which means:
 \begin{enumerate}
   \item[(1)] $\Phi$ is injective and $\gamma$-differentiable;
   \item[(2)] The inverse $\Phi^{-1}:\Phi(\mathcal{M})\rightarrow \mathcal{M}$ is also $\gamma$-differentiable.
 \end{enumerate}
 \end{definition}
A natural choice of the Riemannian metric $g$ of $\mathcal{M}$ is the one induced by the Euclidean metric $e$ of $\bbR^D$ through
\[
g_p(u,v)=e_{\Phi(p)}(d\Phi_p(u),d\Phi_p(v))=\langle d\Phi_p(u),d\Phi_p(v)\rangle_{\bbR^D},\quad \forall u,v\in T_p\mathcal{M},
\]
for any $p\in \mathcal{M}$. Under this Riemannian metric $g$, $d\Phi_p:T_p\mathcal{M}\mapsto d\Phi_p(T_p\mathcal{M})\subset T_{\Phi(p)}\bbR^D$ is an isometric embedding. Nash Embedding Theorem \citep{Nash1956} implies that any valid Riemannian metric on $\mathcal{M}$ could be considered as being induced from a Euclidean metric of $\bbR^m$ with a sufficiently large $m$. Therefore, we would use this naturally induced $g$ as the Riemannian metric of predictor manifold $\mathcal{M}$ when studying the posterior contraction rate of our proposed GP prior defined on this manifold. Under such choice of $g$, $\mathcal{M}$ is isometrically embedded in the ambient space $\bbR^D$. In addition, in the rest of this paper, we will occasionally identify $\mathcal{M}$ with $\Phi(\mathcal{M})$ when no confusion arises.

Tangent spaces and Riemannian metric can be represented in terms of local parameterizations. Let $\phi: U\mapsto \mathcal{M}$ be a chart that maps a neighborhood $U$ of the origin in $\bbR^d$ to a neighborhood $\phi(U)$ of $p\in\mathcal{M}$. In the case that $\mathcal{M}$ is a $C^{\gamma}$ submanifold of $\bbR^D$, $\phi$ itself is $\gamma$-differentiable as a function from $U\in\bbR^d$ to $\bbR^D$. Given $i\in\{1,\ldots,d\}$ and $q=\phi(u)$, where $u=(u_1,\ldots,u_d)\in U$, define $\frac{\partial}{\partial u_i}(q)$ to be the linear functional on $C^{\gamma}(\mathcal{M})$ such that
\[
\frac{\partial}{\partial u_i}(q)(f)=\frac{d(f\circ \phi(u+te_i))}{dt}\bigg|_{t=0},\ \forall f\in C^{\gamma}(\mathcal{M}),
\]
where the $d$-dimensional vector $e_i$ has $1$ in the $i$-th component and $0$'s in others. Then $\frac{\partial}{\partial u_i}(q)$ can be viewed as a tangent vector in the tangent space $T_q\mathcal{M}$. Moreover, $\{\frac{\partial}{\partial u_i}(q):1\leq i\leq d\}$ forms a basis of $T_q\mathcal{M}$ so that each tangent vector $v\in T_q\mathcal{M}$ can written as
\[
v=\sum_{i=1}^d v_i\frac{\partial}{\partial u_i}(q).
\]
Under this basis, the tangent space of $\mathcal{M}$ can be identified as $\bbR^d$ and the matrix representation of differential $d\Phi_q$ at $q$ has a $(j,i)$th element given by
\[
\bigg\{d\Phi_q\bigg(\frac{\partial}{\partial u_i}\bigg)\bigg\}_j=\frac{d(\Phi_j\circ \phi(u+te_i))}{dt}\bigg|_{t=0},\ i=1,\ldots,d,\ j=1,\ldots,D,
\]
where we use the notation $F_j$ to denote the $j$th component of a vector-valued function $F$.
In addition, under the same basis, the Riemannian metric $g_q$ at $q$ can be expressed as
\[
g_q(v,w)=\sum_{i,j=1}^dv_iw_jg^{\phi}_{ij}(u_1,\ldots,u_d),
\]
where $(v_1,\ldots,v_d)$ and $(w_1,\ldots,w_d)$ are the local coordinates for $v,w\in T_q\mathcal{M}$. By the isometry assumption,
\[
g^{\phi}_{ij}(u_1,\ldots,u_d)=\langle d\Phi_q(\frac{\partial}{\partial u_i}),d\Phi_q(\frac{\partial}{\partial u_j})\rangle_{R^D}.
\]

Riemannian volume measure (form) of a region $R$ contained in a coordinate neighborhood $\phi(U)$ is defined as
\[
\text{Vol}(R)=\int_{R}dV(q)\triangleq\int_{\phi^{-1}(R)}\sqrt{\det(g^{\phi}_{ij}(u))}du_1\ldots du_d.
\]
The volume of a general compact region $R$, which is not contained in a coordinate neighborhood, can be defined through partition of unity \citep{doCarmo1992}. Vol generalizes the Lebesque measure of Euclidean spaces and can be used to define the integral of a function $f\in C(\mathcal{M})$ as
$\int_{\mathcal{M}}f(q)dV(q)$. In the special case that $f$ is supported on a coordinate neighborhood $\phi(U)$,
\begin{equation}\label{eq:0b}
\int_{\mathcal{M}}f(q)dV(q)=\int_{U}f(\phi(u))\sqrt{\det(g^{\phi}_{ij}(u))}du_1\ldots du_d.
\end{equation}

\subsection{Exponential Map}
Geodesic curves, generalizations of straight lines from Euclidean spaces to curved spaces, are defined as those curves whose tangent vectors remain parallel if they are transported and are locally the shortest path between points on the manifold. Formally, for $p\in\mathcal{M}$ and $v\in T_p\mathcal{M}$, the geodesic $\gamma(t,p,v),t>0$, starting at $p$ with velocity $v$, i.e. $\gamma(0,p,v)=p$ and $\gamma'(t,p,v)=v$, can be found as the unique solution of an ordinary differential equation. The exponential map $\mathcal{E}_p:T_p\mathcal{M}\mapsto\mathcal{M}$ is defined by $\mathcal{E}_p(v)=\gamma(1,p,v)$ for any $v\in T_p\mathcal{M}$ and $p\in\mathcal{M}$. Under this special local parameterization, calculations can be considerably simplified since quantities such as $\mathcal{E}_p$'s differential and Riemannian metric would have simple forms.

Although Hopf-Rinow theorem ensures that for compact manifolds the exponential map $\mathcal{E}_p$ at any point $p$ can be defined on the entire tangent space $T_p\mathcal{M}$, generally this map is no longer a global diffeomorphism. Therefore to ensure good properties of this exponential map, the notion of a normal neighborhood is introduced as follows.

\begin{definition}
A neighborhood $V$ of $p\in \mathcal{M}$ is called normal if:
\begin{enumerate}
  \item[(1)] Every point $q\in V$ can be joined to $p$ by a unique geodesic $\gamma(t,p,v),0\leq t\leq1$, with $\gamma(0,p,v)=p$ and $\gamma(1,p,v)=q$;
  \item[(2)] $\mathcal{E}_p$ is a diffeomorphism between $V$ and a neighborhood of the origin in $T_p\mathcal{M}$.
\end{enumerate}
\end{definition}

Proposition 2.7 and 3.6 in \cite{doCarmo1992} ensure that every point in $\mathcal{M}$ has a normal neighborhood. However, if we want to study some properties that hold uniformly for all exponential maps $\mathcal{E}_q$ with $q$ in a small neighborhood of $p$, we need a notion stronger than normal neighborhood, whose existence has been established in Theorem 3.7 in \cite{doCarmo1992}.

\begin{definition}
A neighborhood $W$ of $p\in \mathcal{M}$ is called uniformly normal if there exists some $\delta>0$ such that:
\begin{enumerate}
  \item[(1)] For every $q\in W$, $\mathcal{E}_p$ is defined on the $\delta$-ball $B_{\delta}(0)\subset T_q\mathcal{M}$ around the origin of $T_q\mathcal{M}$. Moreover, $\mathcal{E}_p(B_{\delta}(0))$ is a normal neighborhood of $q$;
  \item[(2)] $W\subset \mathcal{E}_p(B_{\delta}(0))$, which implies that $W$ is a normal neighborhood of all its points.
\end{enumerate}
\end{definition}

Moreover, as pointed out by \cite{Gine2005} and \cite{Ye2008}, by shrinking $W$ and reducing $\delta$ at the same time, a special uniformly normal neighborhood can be chosen.

\begin{proposition}\label{prop:1}
For every $p\in\mathcal{M}$ there exists a neighborhood $W$ such that:
\begin{enumerate}
  \item[(1)] $W$ is a uniformly normal neighborhood of $p$ with some $\delta>0$;
  \item[(2)] The closure of $W$ is contained in a strongly convex neighborhood $U$ of $p$;
  \item[(3)] The function $F(q,v)=(q,\mathcal{E}_q(v))$ is a diffeomorphism from $W_{\delta}=W\times B_{\delta}(0)$ onto its image in $\mathcal{M}\times\mathcal{M}$. Moreover, $|dF|$ is bounded away from zero on $W_{\delta}$.
\end{enumerate}
Here $U$ is strongly convex if for every two points in $U$, the minimizing geodesic joining them also lies in $U$.
\end{proposition}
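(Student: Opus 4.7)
The plan is to combine three standard ingredients from Riemannian geometry: the existence of a uniformly normal neighborhood (Theorem 3.7 in do Carmo), the existence of a strongly convex neighborhood at every point (Whitehead's theorem, Proposition 3.4 in do Carmo), and the inverse function theorem applied to the map $F(q,v)=(q,\mathcal{E}_q(v))$. The work lies in shrinking the various neighborhoods to one common open set on which all three conclusions hold simultaneously.

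First, pick a uniformly normal neighborhood $W_1$ of $p$ with parameter $\delta_1>0$, so that for every $q\in W_1$ the exponential map $\mathcal{E}_q$ is a diffeomorphism on the ball $B_{\delta_1}(0)\subset T_q\mathcal{M}$, and $W_1\subset \mathcal{E}_q(B_{\delta_1}(0))$ for every such $q$. Next, pick a strongly convex open neighborhood $U$ of $p$ (Whitehead). Because $\mathcal{M}$ is locally compact, I can choose an open $W_2\ni p$ with $\overline{W_2}\subset W_1\cap U$ and $\overline{W_2}$ compact; this already secures (1) on $W_2$ (with $\delta_1$) and (2) with strongly convex ambient $U$.

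For (3), view $F$ in a local chart around $p$ together with a smooth local frame of $TM$, so that $F$ becomes a smooth map between open subsets of $\mathbb{R}^d\times\mathbb{R}^d$. Because $\mathcal{E}_q(0)=q$ and the Gauss lemma gives $d(\mathcal{E}_q)_0=\mathrm{id}_{T_q\mathcal{M}}$, the differential at $(q,0)$ reads
\[
dF_{(q,0)}(u_1,u_2)=\bigl(u_1,\,u_1+u_2\bigr),
\]
whose determinant equals $1$ in these coordinates and is in particular invertible. Since $dF$ is continuous in $(q,v)$ and $\overline{W_2}\times\{0\}$ is compact, there exist $\delta\in(0,\delta_1]$ and $c>0$ such that $|\det dF_{(q,v)}|\geq c$ for all $(q,v)$ with $q\in\overline{W_2}$ and $\|v\|\leq\delta$. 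Set $W:=W_2$ and $W_\delta:=W\times B_\delta(0)$; this gives the lower bound on $|dF|$ claimed in (3).

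It remains to show $F$ is a diffeomorphism from $W_\delta$ onto its image. Injectivity is free: if $F(q_1,v_1)=F(q_2,v_2)$, the first coordinate forces $q_1=q_2$, hence $\mathcal{E}_{q_1}(v_1)=\mathcal{E}_{q_1}(v_2)$, and since $\delta\leq \delta_1$ the uniformly normal property makes $\mathcal{E}_{q_1}$ a diffeomorphism on $B_{\delta_1}(0)\supset B_\delta(0)$, so $v_1=v_2$. Combined with the uniform non-degeneracy of $dF$, $F$ is a smooth injective immersion from an open set of dimension $2d$ into a $2d$-dimensional manifold, and the inverse function theorem upgrades this to a diffeomorphism onto its image. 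The main obstacle is bookkeeping of quantifier order: the uniformly normal neighborhood gives a pointwise $\delta_1$, but we need a single $\delta$ that works simultaneously (a) as the radius on which each $\mathcal{E}_q$ is a diffeomorphism, (b) as a radius on which $|dF|$ stays bounded below on $\overline{W}\times\overline{B_\delta(0)}$, and (c) inside a strongly convex containing set. Compactness of $\overline{W}$ after shrinking is exactly what makes a common choice possible.
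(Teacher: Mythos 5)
The paper does not supply its own proof of this proposition; it is stated with a pointer to Gin\'e and Koltchinskii (2005) and Ye and Zhou (2008) and the brief remark that ``by shrinking $W$ and reducing $\delta$ at the same time'' the required neighborhood can be obtained. Your proof is a correct and complete reconstruction of the standard argument those sources rely on. The three ingredients you combine are exactly the right ones: Theorem~3.7 in do Carmo for uniformly normal neighborhoods, Whitehead's theorem for a strongly convex neighborhood, and the inverse function theorem for~$F$. Your computation of $dF_{(q,0)}$, using $\mathcal{E}_q(0)=q$ and $d(\mathcal{E}_q)_0=\mathrm{id}_{T_q\mathcal{M}}$, correctly exhibits a block lower-triangular matrix with identity diagonal blocks, hence unit determinant, and the compactness/continuity argument for a uniform lower bound on $|\det dF|$ over $\overline{W_2}\times \overline{B_\delta(0)}$ is sound (strictly speaking, with a general smooth frame the lower-right block is an invertible matrix $A(q)$ rather than the identity, but that changes nothing, since $\det A(q)\neq0$ and continuity on a compact set still gives a positive lower bound). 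Injectivity via the uniformly normal property, and the upgrade from ``injective local diffeomorphism'' to ``diffeomorphism onto its image,'' are also correct. In short, you have supplied the details the paper omits; there is no discrepancy to report.
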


Throughout the rest of the paper, we will assume that the uniformly normal neighborhoods also possess the properties in the above proposition. Given a point $p\in \mathcal{M}$, we choose a uniformly normal neighborhood $W$ of $p$. Let $\{e_1,\ldots,e_d\}$ be an orthonormal basis of $T_p\mathcal{M}$. For each $q\in W$, we can define a set of tangent vectors $\{e_1^q,\ldots,e_d^q\}\subset T_q\mathcal{M}$ by parallel transport \citep{doCarmo1992}: $e_i\in T_p\mathcal{M}\mapsto e_i^{\gamma(t)}\in T_{\gamma(t)}\mathcal{M}$ from $p$ to $q$ along the unique minimizing geodesic $\gamma(t)\ (0\leq t\leq 1)$ with $\gamma(0)=p, \gamma(1)=q$. Since parallel transport preserves the inner product in the sense that $g_{\gamma(t)}(v^{\gamma(t)},w^{\gamma(t)})=g_p(v,w),\forall v,w\in T_p\mathcal{M}$, $\{e_1^q,\ldots,e_d^q\}$ forms an orthonormal basis of $T_q\mathcal{M}$. In addition, the orthonormal frame defined in this way is unique and depends smoothly on $q$. Therefore, we obtain on $W$ a system of normal coordinates at each $q\in W$, which parameterizes $x\in\mathcal{E}_q(B_{\delta}(0))$ by
 \begin{equation}\label{eq:local}
    x=\mathcal{E}_q\bigg(\sum_{i=1}^du_ie^q_i\bigg)=\phi^q(u_1,\ldots,u_d),\ u=(u_1,\ldots,u_d)\in B_{\delta}(0).
 \end{equation}
 Such coordinates are called $q$-normal coordinates. The basis of $T_q\mathcal{M}$
associated with this coordinate chart $(B_{\delta}(0),\phi^q)$ is given by
\[
\frac{\partial}{\partial u_i}(q)(f)=\frac{d(f\circ\mathcal{E}_q(te^q_i))}{dt}\bigg|_{t=0}=\frac{d(f\circ\gamma(t,q,e_i^q))}{dt}\bigg|_{t=0}=e_i^q(f), \ i=1,\ldots,d.
\]
Therefore $\{\frac{\partial}{\partial u_i}(q)=e_i^q:1\leq i\leq d\}$ forms an orthonormal basis on $T_q\mathcal{M}$. By Proposition \ref{prop:1}, for each $x\in\mathcal{E}_q(B_{\delta}(0))$, there exists a minimizing geodesic $\gamma(t,q,v),0\leq t\leq 1$, such that $\gamma(0,q,v)=q,\gamma'(0,q,v)=v$ and $\gamma(1,q,v)=x$, where $v=\mathcal{E}_q^{-1}(x)=\sum_{i=1}^du_ie^q_i\in T_q\mathcal{M}$. Hence $d_{\mathcal{M}}(q,x)=\int_0^1|\gamma'(t,q,v)|dt=|v|=||u||$, i.e.
\begin{equation}\label{eq:1b}
d_{\mathcal{M}}\bigg(q,\mathcal{E}_q\bigg(\sum_{i=1}^du_ie^q_i\bigg)\bigg)=||u||,\quad\forall u\in B_{\delta_p}(0),
\end{equation}
where $||\cdot||$ is the Euclidean norm on $\bbR^d$.
 The components $g_{ij}^q(u)$ of the Riemannian metric in $q$-normal coordinates satisfy $g_{ij}^q(0)=g_q(e_i^q,e_j^q)=\delta_{ij}$. The following results \citep[Proposition 2.2]{Gine2005} provide local expansions for the Riemannian metric $g_{ij}^q(u)$, the Jacobian $\sqrt{\det(g^{\phi}_{ij}(u))}$ and the distance $d_{\mathcal{M}}(q,\sum_{i=1}^du_ie^q_i)$ in a neighborhood of $p$.

\begin{proposition}\label{prop:2}
Let $\mathcal{M}$ be a submanifold of $\bbR^D$ which is isometrically embedded.
Given a point $p\in\mathcal{M}$, let $W$ and $\delta$ be as in Proposition  \ref{prop:1}, and consider for each $q\in W$ the $q$-normal coordinates defined above. Suppose that $x=\sum_{i=1}^du_ie^q_i\in\mathcal{E}_q(B_{\delta}(0))$. Then:
\begin{enumerate}
  \item[(1)] The components $g_{ij}^q(u)$ of the metric tensor in $q$-normal coordinates admit the following expansion, uniformly in $q\in W$ and $x\in\mathcal{E}_q(B_{\delta}(0))$:
      \begin{equation}\label{eq:2b}
       g_{ij}^q(u_1,\ldots,u_d)=\delta_{ij}-\frac{1}{3}\sum_{r,s=1}^dR^q_{irsj}(0)u_ru_s+O(d^3_{\mathcal{M}}(q,x)),
      \end{equation}
      where $R^q_{irsj}(0)$ are the components of the curvature tensor at $q$ in $q$-normal coordinates.
  \item[(2)] The Jacobian $\sqrt{\det(g_{ij}^q)}(u)$ in $q$-normal coordinates has the following expansion, uniformly in $q\in W$ and $x\in\mathcal{E}_q(B_{\delta}(0))$:
      \begin{equation}\label{eq:3b}
       \sqrt{\det(g_{ij}^q)}(u_1,\ldots,u_d)=1-\frac{1}{6}\sum_{r,s=1}^d\text{Ric}^q_{rs}(0)u_ru_s+O(d^3_{\mathcal{M}}(q,x)),
      \end{equation}
      where $\text{Ric}^q_{rs}(0)$ are the components of the Ricci tensor at $q$ in $q$-normal coordinates.
  \item[(3)] There exists $C_p<\infty$ such that
  \begin{equation}\label{eq:4b}
    0\leq d^2_{\mathcal{M}}(q,x)-||q-x||^2\leq C_pd^4_{\mathcal{M}}(q,x)
  \end{equation}
  holds uniformly in $q\in W$ and $x\in\mathcal{E}_q(B_{\delta}(0))$.
\end{enumerate}
\end{proposition}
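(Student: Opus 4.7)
The plan is to work entirely in $q$-normal coordinates $\phi^q$ on $B_\delta(0) \subset T_q\mathcal{M}$ and to exploit three facts simultaneously: (i) the radial curve $t \mapsto \phi^q(tu)$ is a unit-speed geodesic, (ii) the chosen frame $\{e_i^q\}$ is orthonormal at $q$, and (iii) as an isometric embedding, $\phi^q$ viewed as a map into $\bbR^D$ admits a Taylor expansion whose quadratic part is \emph{normal} to $\mathcal{M}$ at $q$. All three expansions depend smoothly on $q$, so uniformity in $q\in W$ follows by compactness of $\overline{W}$ together with continuity of the curvature and second fundamental form in $q$.

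For part~(1), I would invoke the Jacobi equation along the radial geodesic $t\mapsto \phi^q(tu)$. In normal coordinates $g^q_{ij}(0)=\delta_{ij}$ and the Christoffel symbols vanish at $0$, forcing the first-order Taylor term of $g^q_{ij}$ to vanish. The standard computation $\partial_r\partial_s g^q_{ij}(0) = -\tfrac{1}{3}(R^q_{irsj}(0)+R^q_{jrsi}(0))$ (from Jacobi fields about the radial geodesic) gives the quadratic term. The remainder is $O(|u|^3) = O(d_{\mathcal{M}}^3(q,x))$ by \eqref{eq:1b}, with a hidden constant controlled by $\sup_{q\in\overline{W}}||\nabla R^q||$, finite by $C^\gamma$ smoothness with $\gamma\geq 2$.

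Part~(2) then drops out algebraically. Writing $g^q_{ij}(u) = \delta_{ij}+A_{ij}(u)$ with $A_{ij}(u) = -\tfrac{1}{3}\sum_{r,s} R^q_{irsj}(0)u_ru_s + O(|u|^3)$, the identity $\det(I+A) = 1+\mathrm{tr}(A) + O(||A||^2)$ yields $\det(g^q_{ij})(u) = 1 - \tfrac{1}{3}\sum_{i,r,s} R^q_{irsi}(0)u_ru_s + O(|u|^3)$. Recognizing the contraction $\sum_i R^q_{irsi}(0) = \mathrm{Ric}^q_{rs}(0)$ and applying $\sqrt{1+t}=1+t/2+O(t^2)$ produces the stated factor of $1/6$, with the same type of uniform remainder.

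Part~(3) is the main obstacle, since it requires linking intrinsic geometry (normal coordinates, where $d_{\mathcal{M}}(q,x)=|u|$) to the extrinsic Euclidean distance. My plan is to Taylor expand the embedding map $\phi^q:B_\delta(0)\to \bbR^D$. Because $t\mapsto \phi^q(tu)$ is a geodesic of $\mathcal{M}$, its ambient acceleration at $t=0$ equals the second fundamental form $\mathrm{II}_q(u,u)$, which lies in the normal space $N_q\mathcal{M}$; hence $\phi^q(u) = q + \sum_i u_ie_i^q + \tfrac{1}{2}\mathrm{II}_q(u,u) + O(|u|^3)$. Squaring and using orthogonality of $\mathrm{II}_q(u,u)$ to every $e_i^q$ kills the would-be $O(|u|^3)$ cross term, giving $||\phi^q(u)-q||^2 = |u|^2 + O(|u|^4)$; the lower bound $||q-x||^2\leq d_{\mathcal{M}}^2(q,x)=|u|^2$ is automatic since a geodesic is never shorter than the ambient chord, and the upper bound $d_{\mathcal{M}}^2(q,x)-||q-x||^2 \leq C_p|u|^4 = C_p d_{\mathcal{M}}^4(q,x)$ follows with $C_p$ bounded by a multiple of $\sup_{q\in\overline{W}}||\mathrm{II}_q||^2$. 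The delicate point is verifying that the orthogonality of the second-order part of $\phi^q$ genuinely kills the cubic cross-terms; this is where the normal-coordinate/geodesic structure is essential, and where the $C^\gamma$ smoothness of the embedding enters to control the cubic remainder.
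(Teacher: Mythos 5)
The paper does not prove this proposition; it is quoted directly from Giné and Koltchinskii (2005), Proposition 2.2, so there is no in-paper argument to compare against. Your blind outline is a correct and essentially standard reconstruction of that result. For (1), the Jacobi-field/normal-coordinate computation giving $\partial_r\partial_s g^q_{ij}(0)=-\tfrac13\bigl(R^q_{irsj}(0)+R^q_{jrsi}(0)\bigr)$ is the right tool; for (2), the identities $\det(I+A)=1+\operatorname{tr}A+O(\|A\|^2)$ and $\sqrt{1+t}=1+t/2+O(t^2)$ together with the contraction $\sum_i R^q_{irsi}=\operatorname{Ric}^q_{rs}$ give exactly the stated $1/6$ coefficient. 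Part (3) is where you correctly identify the key point: the quadratic part of the ambient Taylor expansion of $\phi^q$ at $0$ equals $\tfrac12\mathrm{II}_q(u,u)\in N_q\mathcal M$ because the Christoffel symbols vanish at the center of normal coordinates, so the would-be cubic cross-term $2\langle \sum_iu_ie_i^q,\ \tfrac12\mathrm{II}_q(u,u)\rangle$ in $\|\phi^q(u)-q\|^2$ dies by orthogonality and the error is genuinely $O(|u|^4)$, while the lower bound in \eqref{eq:4b} is the elementary chord-vs-arclength inequality for an isometric embedding. Uniformity over $q\in\overline W$ is then a compactness argument, consistent with the uniform derivative bounds of Lemma \ref{le:bd}. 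One small caveat: the parenthetical ``$C^\gamma$ smoothness with $\gamma\geq 2$'' is too weak to justify a \emph{pointwise} $O(|u|^3)$ Taylor remainder for $g^q_{ij}$ in (1) (the metric, and hence the embedding, must be a few orders smoother so that the second-order remainder is controlled by a bounded third derivative rather than merely $o(|u|^2)$); this does not affect the paper, which cites rather than re-derives the proposition, but it is worth tightening in a standalone proof.
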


Note that in Proposition \ref{prop:2}, (3) only provides a comparison of geodesic distance and Euclidean distance in local neighborhoods. Under a stronger compactness assumption on $\mathcal{M}$, the following lemma offers a global comparison of these two distances.

\begin{lemma}\label{le:8}
Let $\mathcal{M}$ be a connected compact submanifold of $\bbR^D$ with a Riemannian metric $g$ that is not necessarily induced from the Euclidean metric. Then there exist positive constants $C_1$ and $C_2$ dependent on $g$, such that
\begin{equation}\label{eq:5b}
C_1||x-y||\leq d_{\mathcal{M}}(x,y)\leq C_2||x-y||,\quad \forall x,y\in\mathcal{M},
\end{equation}
where $||\cdot||$ is the Euclidean distance in $\bbR^D$. Moreover, if $\mathcal{M}$ is further assumed to be isometrically embedded, i.e. $g$ is induced from the Euclidean metric of $\bbR^D$, then $C_1$ could be chosen to be one and $C_2\geq 1$.
\end{lemma}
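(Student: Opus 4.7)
The plan is to prove first the isometric case, then reduce the general case to it via an equivalence of metrics argument, and throughout to split the proof of each inequality into a trivial ``global-lower / local-upper'' skeleton that is then globalized by compactness.

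For the lower bound in the isometric case, I would invoke the definition of isometric embedding: for any piecewise smooth curve $\gamma:[0,1]\to\mathcal{M}$ joining $x$ and $y$, its $g$-length equals the Euclidean length of $\gamma$ viewed as a curve in $\bbR^D$, which in turn is at least the straight-line distance $\|x-y\|$. Taking the infimum over all such $\gamma$ gives $d_{\mathcal{M}}(x,y)\geq\|x-y\|$, so $C_1=1$. This also shows that in the isometric case the eventual $C_2$ must satisfy $C_2\geq 1$, since the local expansion in Proposition 7.5(3) forces $d_{\mathcal{M}}(x,y)/\|x-y\|\to 1$ as $y\to x$.

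For the upper bound in the isometric case I would first extract a uniform local version of Proposition 7.5(3). By compactness of $\mathcal{M}$, I can cover $\mathcal{M}$ by finitely many neighborhoods $W_{p_1},\ldots,W_{p_N}$ of the form given in Proposition 7.4, and set $\delta_0=\min_i\min\{\delta_{p_i},1/\sqrt{2C_{p_i}}\}>0$. Then for any two points $x,y\in\mathcal{M}$ with $d_{\mathcal{M}}(x,y)<\delta_0$, the inequality $d_{\mathcal{M}}^2(x,y)\leq\|x-y\|^2+C_pd_{\mathcal{M}}^4(x,y)$ yields $d_{\mathcal{M}}(x,y)\leq\sqrt{2}\|x-y\|$. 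To globalize, I would let
\[
c=\inf\{\|x-y\|:x,y\in\mathcal{M},\ d_{\mathcal{M}}(x,y)\geq\delta_0\},
\]
and argue $c>0$ by contradiction: a sequence $(x_n,y_n)$ achieving $c=0$ would, after passing to a Euclidean-convergent subsequence, have a common limit point, but then for large $n$ the pair $(x_n,y_n)$ would lie in a chart where the local bound forces $d_{\mathcal{M}}(x_n,y_n)\to 0$, contradicting $d_{\mathcal{M}}(x_n,y_n)\geq\delta_0$. With $D=\operatorname{diam}_{\mathcal{M}}(\mathcal{M})<\infty$ (finite by compactness and connectedness), any pair with $d_{\mathcal{M}}(x,y)\geq\delta_0$ satisfies $d_{\mathcal{M}}(x,y)/\|x-y\|\leq D/c$, and I can take $C_2=\max\{\sqrt{2},D/c\}$.

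For the general (non-isometric) case I would compare $g$ with the Euclidean-induced metric $g_0$ on each tangent space. Continuity of both metrics and compactness of $\mathcal{M}$ yield constants $0<\lambda_1\leq\lambda_2<\infty$ with $\lambda_1g_0(v,v)\leq g(v,v)\leq\lambda_2g_0(v,v)$ uniformly over $v\in T_p\mathcal{M}$ and $p\in\mathcal{M}$; integrating along curves gives $\sqrt{\lambda_1}\,d_{g_0}(x,y)\leq d_g(x,y)\leq\sqrt{\lambda_2}\,d_{g_0}(x,y)$, and combining with the isometric case already proved gives the desired equivalence with appropriate $C_1,C_2>0$ depending on $g$.

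The main obstacle is the globalization step in the upper bound: the local inequality from Proposition 7.5(3) only applies when the two points already lie in a common normal neighborhood, so the contradiction argument using compactness of $\mathcal{M}\times\mathcal{M}$ and the uniform local bound is the essential nonroutine ingredient. Everything else is either definitional (lower bound, isometric case) or a standard comparison of Riemannian metrics on a compact manifold.
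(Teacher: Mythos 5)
Your proposal is sound overall but organizes the argument differently from the paper, and the isometric‐case lower bound is noticeably cleaner. The paper proves the general (non‐isometric) equivalence first, by a sequential contradiction: if $\|x_k-y_k\|/d_{\mathcal{M}}(x_k,y_k)\to\infty$, compactness forces $x_k,y_k$ to a common limit $p$, and then the uniform boundedness of the embedding's partial derivatives on a normal neighborhood gives the local Lipschitz bound $\|x-y\|\leq C\,d_{\mathcal{M}}(x,y)$, yielding the contradiction; the second inequality is declared ``similar, omitted,'' and the $C_1=1$ claim in the isometric case is proved separately by covering a minimizing geodesic with normal neighborhoods and summing Proposition 7.5\,(3) over the segments. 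You invert this: you prove the isometric case first --- obtaining $d_{\mathcal{M}}(x,y)\geq\|x-y\|$ in one line from ``Riemannian length of a curve $=$ Euclidean arc length $\geq$ chord length,'' which is simpler and more conceptual than the segment‐summing --- and obtain the upper bound by compactness and a local use of 7.5\,(3); then you deduce the general case from the standard equivalence of any two continuous Riemannian metrics on a compact manifold, with explicit factors $\sqrt{\lambda_1},\sqrt{\lambda_2}$. Both routes are valid; yours makes the dependence of $C_1,C_2$ on $g$ more transparent and disposes of the $C_1=1$ observation more elegantly.

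One step needs to be tightened. In your globalization you claim that if $\|x_n-y_n\|\to 0$ with $d_{\mathcal{M}}(x_n,y_n)\geq\delta_0$, then ``for large $n$ the pair would lie in a chart where the local bound forces $d_{\mathcal{M}}(x_n,y_n)\to 0$.'' But the local bound you established ($d_{\mathcal{M}}\leq\sqrt{2}\|x-y\|$) was derived under the hypothesis $d_{\mathcal{M}}(x,y)<\delta_0$, which is exactly what you cannot assume on the event $\{d_{\mathcal{M}}\geq\delta_0\}$, so as written the step is circular. What you actually need here has nothing to do with the local two‐sided comparison: since the embedding is a homeomorphism onto its image, $\|x_n-p\|\to 0$ and $\|y_n-p\|\to 0$ already imply $x_n\to p$ and $y_n\to p$ in the manifold topology, and then the triangle inequality gives $d_{\mathcal{M}}(x_n,y_n)\leq d_{\mathcal{M}}(x_n,p)+d_{\mathcal{M}}(p,y_n)\to 0$, the desired contradiction. (This is the same continuity‐of‐$d_{\mathcal{M}}$ fact the paper uses implicitly when it infers $x_0=y_0$ from $d_{\mathcal{M}}(x_k,y_k)\to 0$.) With that substitution the proof is complete.
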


\begin{proof}
We only prove the first half of the inequality since the second half follows by a similar argument and is omitted here. Assume in the contrary that for some sequence $\{M_k\}$ satisfying $M_k\to\infty$ as $k\to\infty$, there exists $(x_k,y_k)$ such that $||x_k-y_k||\geq M_k d_{\mathcal{M}}(x_k,y_k)$. Let $\Phi:\mathcal{M}\rightarrow\bbR^D$ be the embedding. Since $\mathcal{M}$ is compact, $\{x_k\}$ and $\{y_k\}$ have convergent subsequences, whose notations are abused as $\{x_k\}$ and $\{y_k\}$ for simplicity. Denote the limits of these two sequences as $x_0$ and $y_0$. By the compactness of $\mathcal{M}$ and continuity of $\Phi$, we know that $\Phi(\mathcal{M})$ is also compact and therefore $d_{\mathcal{M}}(x_k,y_k)\rightarrow 0$, as $k\rightarrow\infty$. This implies that $x_0=y_0=p$.

For each $j\in\{1,\ldots,p\}$, the $j$th component $\Phi_j:\mathcal{M}\rightarrow\bbR$ of $\Phi$ is differentiable. Let $\delta_{p}$ and $W_p$ be the $\delta$ and $W$ specified in Proposition \ref{prop:1}. Define $f(q,v)=\Phi\big(\pi_2(F(q,v))\big)=\Phi(\mathcal{E}_p(v))$, where $\pi_2$ is the projection of $\mathcal{M}\times\mathcal{M}$ on to its second component. By Proposition \ref{prop:1}, $f$ is differentiable on the compact set $\bar{W}_{\delta_p}$ and therefore for each $l\in\{1,\ldots,d\}$, $\frac{\partial f}{\partial v_l}$ is uniformly bounded on $\bar{W}_{\delta_p}$. This implies that for some constant $C>0$, $||x-y||=||f(y,\mathcal{E}_y^{-1}(x))-f(y,\mathcal{E}_y^{-1}(y))||\leq C ||\mathcal{E}_y^{-1}(x)-\mathcal{E}_y^{-1}(y)||=Cd_{\mathcal{M}}(x,y)$ for all $x,y\in W_p$ with $d_{\mathcal{M}}(x,y)\leq\delta_p$. Since $x_k\rightarrow p$ and $y_k\rightarrow p$, there exists an integer $k_0$ such that for all $k>k_0$, $x_k$, $y_k\in W_p$ and $d_{\mathcal{M}}(x_k,y_k)\leq\delta_p$. Therefore $||x_k-y_k||\leq Cd_{\mathcal{M}}(x_k,y_k)$, which contradicts our assumption that $||x_k-y_k||\geq M_k d_{\mathcal{M}}(x_k,y_k)$ for all $k$.

Consider the case when $\Phi$ is an isometric embedding. For any points $x,y\in \mathcal{M}$, we can cover the compact geodesic path $l_{x,y}$ from $x$ to $y$ by $\{W_{p_i}:i=1,\ldots,n\}$ associated with a finite number of points $\{p_1,\ldots,p_n\}\subset\mathcal{M}$. Therefore we can divide $l_{x,y}$ into $\bigcup_{s=1}^n l(x_{s-1},x_s)$ such that $x_0=x$, $x_{n}=y$, and each segment $l(x_{s-1},x_s)$ lies in one of the $W_{p_i}$'s. By Proposition \ref{prop:2}\,(3), for each $s\in\{1,\ldots,n\}$, $d_{\mathcal{M}}(x_{s-1},x_s)\geq ||x_{s-1}-x_s||$. Therefore,
\begin{align*}
    d_{\mathcal{M}}(x,y)=\sum_{s=1}^nd_{\mathcal{M}}(x_{s-1},x_s)\geq \sum_{s=1}^n||x_{s-1}-x_s||\geq ||x-y||,
\end{align*}
where the last step follows from the triangle inequality.
\end{proof}

The above lemma also implies that geodesic distances induced by different Riemannian metrics on $\mathcal{M}$ are equivalent to each other.

Fix $p\in\mathcal{M}$ and let $W$ and $\delta>0$ be specified as in Proposition \ref{prop:1}. Since $\mathcal{M}$ is a submanifold of $\bbR^D$, for any point $q\in \mathcal{M}$, the exponential map $\mathcal{E}_q:B_{\delta}(0)\rightarrow\mathcal{M}\subset R^D$ is a differentiable function between two subsets of Euclidean spaces. Here, we can choose any orthonormal basis of $T_q\mathcal{M}$ since the representations of $\mathcal{E}_q$ under different orthonormal bases are the same up to $d\times d$ rotation matrices. Under the compactness assumption on $\mathcal{M}$, the following lemma ensures the existence of a bound on the partial derivatives of $\mathcal{E}_q$'s components $\{\mathcal{E}_{q,i}:i=1,\ldots,D\}$ uniformly for all $q$ in the $\delta$ neighborhood of $p$:

\begin{lemma}\label{le:bd}
Let $\mathcal{M}$ be a connected $C^{\gamma}$ compact submanifold of $\bbR^D$ with $\gamma$ being $\infty$ or any integer greater than two. Let $k$ be an integer such that $k\leq\gamma$. Then:
\begin{enumerate}\itemsep=-1pt
  \item There exists a universal positive number $\delta_0$, such that for every $p\in\mathcal{M}$, proposition \ref{prop:1} is satisfied with some $\delta>\delta_0$ and $W_p$;
  \item With this $\delta_0$, for any $p\in\mathcal{M}$, mixed partial derivatives with order less than or equal to $k$ of each component of $\mathcal{E}_p$ are bounded in $B_{\delta_0}(0)\in T_p\mathcal{M}$ by a universal constant $C>0$.
\end{enumerate}
\end{lemma}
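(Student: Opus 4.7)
The plan is to deduce both parts from compactness of $\mathcal{M}$: part (1) by upgrading the pointwise existence in Proposition 7.4 to a uniform choice of $\delta$, and part (2) by combining smooth dependence of ODE solutions on initial data with the fact that continuous functions on compact sets are bounded. For part (1), I would start from the pointwise neighborhoods: for every $p\in\mathcal{M}$, Proposition 7.4 provides an open $W_p\ni p$ and a $\delta_p>0$ satisfying its three conditions. Since $\{W_p:p\in\mathcal{M}\}$ is an open cover of the compact set $\mathcal{M}$, I extract a finite subcover $W_{p_1},\ldots,W_{p_N}$ and set $\delta_0=\tfrac{1}{2}\min_{1\leq i\leq N}\delta_{p_i}>0$. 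Given any $p\in\mathcal{M}$, fix some $i$ with $p\in W_{p_i}$. Uniform normality of $W_{p_i}$ with parameter $\delta_{p_i}$ ensures that $\mathcal{E}_p$ is a diffeomorphism on $B_{\delta_{p_i}}(0)\supset B_{\delta_0}(0)$. To produce a $W\ni p$ for which all three conditions of Proposition 7.4 hold with parameter $\delta_0$, I would take $W$ to be a sufficiently small open subset of $W_{p_i}$ containing $p$ whose closure lies in a strongly convex neighborhood of $p$ and whose geodesic diameter is below $\delta_0$; property (3) is then inherited as the restriction of the corresponding diffeomorphism on $W_{p_i}\times B_{\delta_{p_i}}(0)$.

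For part (2), the key input is smooth dependence of the geodesic flow on initial data. In any local chart on $\mathcal{M}$, geodesics obey the second-order ODE $\ddot{\gamma}^a+\Gamma^a_{bc}(\gamma)\dot{\gamma}^b\dot{\gamma}^c=0$ whose coefficients are controlled by the metric induced by the $C^\gamma$ embedding $\Phi$. Standard ODE theory then gives that $(p,v)\mapsto\mathcal{E}_p(v)$ is jointly of the relevant smoothness class on any compact region where it is defined, for the orders $k\leq\gamma$ allowed by the theory. Composing with $\Phi$ and reading off the $D$ ambient components produces $D$ continuous mixed-partial-derivative functions of order up to $k$ in the $v$-variables. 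To make the $v$-variables well defined I would parametrize $v\in T_p\mathcal{M}$ using the parallel-transported orthonormal frame $\{e_i^q\}$ introduced in the paragraph preceding Proposition 7.5, which depends smoothly on $q$ within each uniformly normal neighborhood. Covering $\mathcal{M}$ with the finite collection $W_{p_1},\ldots,W_{p_N}$ from part (1) and working inside each $W_{p_i}$, the mixed partials up to order $k$ of each ambient component are continuous functions on the compact set $\overline{W_{p_i}}\times\overline{B_{\delta_0}(0)}$, and hence bounded; taking the maximum of the resulting bounds over $i$ supplies the universal constant $C$.

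The technical wrinkle I anticipate is that the $v$-coordinates are canonical only up to an orthogonal rotation of $T_p\mathcal{M}$, so the literal numerical values of the partial derivatives of a component of $\mathcal{E}_p$ depend on which orthonormal basis is used. The cleanest remedy is to observe that the sup norm of each $k$th derivative, viewed as a symmetric multilinear form on $T_p\mathcal{M}$, is invariant under orthogonal changes of basis; consequently the bound produced by the finite-subcover argument is intrinsic and independent of the local trivialization. With this invariance in place the compactness argument is routine, and the same $\delta_0$ serves in both conclusions of the lemma.
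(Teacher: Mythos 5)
Your proof is correct and follows essentially the same route as the paper's: for part (1), extract a finite subcover of the $W(p,\delta_p)$'s and set $\delta_0$ in terms of the minimum $\delta_{p_i}$; for part (2), use the parallel-transported frame to parametrize $F_j(q,u)=\mathcal{E}_q(\sum_i u_i e_i^q)$, observe that its mixed $u$-partials up to order $k$ are continuous on the compact set $\overline{W(p_j,\delta_{p_j})}\times\overline{B_{\delta_{p_j}}(0)}$, and take the max over the finite cover. The only real departure is that in part (1) you build a fresh shrunken $W$ around each $p$ so that Proposition 7.5 holds with parameter exactly $\delta_0$, whereas the paper simply reuses $W_p=W(p_j,\delta_{p_j})$ with $\delta=\delta_{p_j}\geq\delta_0$, which is all the statement ``some $\delta>\delta_0$'' requires — your extra construction is harmless but unnecessary; your remark on orthogonal-invariance of the derivative norms is likewise sound but already implicitly handled by working chart-by-chart and taking a finite maximum.
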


\begin{proof}
Note that $\mathcal{M}=\bigcup_{p\in\mathcal{M}}W(p,\delta_p)$, where $\delta_p$ and $W(p,\delta_p)$ are the corresponding $p$ dependent $\delta$ and open neighborhood $W$ in proposition \ref{prop:1}. By the compactness of $\mathcal{M}$, we can choose a finite covering $\{W(p_1,\delta_{p_1}),\ldots,W(p_n,\delta_{p_n})\}$. Let $\delta_0=\min\{\delta_{p_1},\ldots,\delta_{p_n}\}$. Then the first condition is satisfied with this $\delta_0$ since for any $p\in\mathcal{M}$, $W_p$ could be chosen as any $W(p_j,\delta_{p_j})$ that contains $p$.

Next we prove the second condition. For each $j$, we can define $q$-normal coordinates on $W(p_j,\delta_{p_j})$ as before such that the exponential map at each point $q\in W(p_j,\delta_{p_j})$ can be parameterized as \eqref{eq:local}.
Define $F_j:W(p_j,\delta_{p_j})\times B_{\delta_{p_j}}(0)\rightarrow \bbR^D$ by $F_j(q,u)=\mathcal{E}_q(\sum_{i=1}^du_ie^q_i)=\phi^q(u)$. Then any order $k$ mixed partial derivative $\frac{\partial^k \phi^q_j}{\partial{u_{i1}}\cdots\partial{u_{ik}}}(u)$ of $F_j(q,u)$ with respect to $u$ is continuous on the compact set $W(p_j,\delta_{p_j})\times B_{\delta_{p_j}}(0)$. Therefore these partial derivatives are bounded uniformly in $q\in W(p_j,\delta_{p_j})$ and $u\in B_{\delta_{p_j}}(0)$. Since $\mathcal{M}$ is covered by a finite number of sets $\{W(p_1,\delta_{p_1}),\ldots,W(p_n,\delta_{p_n})\}$, the second conclusion is also true.
\end{proof}

By lemma \ref{le:bd}, when a compact submanifold $\mathcal{M}$ has smoothness level greater than or equal to $k$, we can approximate the exponential map $\mathcal{E}_{p}:B_{\delta_0}(0)\subset \bbR^d\rightarrow \bbR^D$ at any point $p\in\mathcal{M}$ by a local Taylor polynomial of order $k$ with error bound $C\delta_0^k$, where $C$ is a universal constant that only depends on $k$ and $\mathcal{M}$.

\section{Proofs of the main results}
\subsection{Proof of Theorem 2.1}
Define centered and decentered concentration functions of the process $W^a=(W_{ax}:x\in\mathcal{M})$ by
\begin{eqnarray*}
% \nonumber to remove numbering (before each equation)
&&\phi_0^a(\epsilon)=-\log P(|W^a|_{\infty}\leq \epsilon),\\
&&\phi_{f_0}^a(\epsilon)=\inf_{h\in\tilde{\mathbb{H}}^{a}:|h-f_0|_{\infty}\leq \epsilon}||h||^2_{\tilde{\mathbb{H}}^{a}}-\log P(|W^a|_{\infty}\leq \epsilon),
\end{eqnarray*}
where $|h|_{\infty}=\sup_{x\in \mathcal{M}}|f(x)|$ is the sup norm on the manifold $\mathcal{M}$.
Then $P(|W^a|_{\infty}\leq\epsilon)=\exp(-\phi_0^a(\epsilon))$ by definition. Moreover, by the results in \cite{Kuelbs1994},
\begin{equation}\label{eq:2}
P(||W^a-f_0||_{\infty}\leq2\epsilon)\geq e^{-\phi_{f_0}^a(\epsilon)}.
\end{equation}
Suppose that $f_0\in C^s(\mathcal{M})$ for some $s\leq\min\{2,\gamma-1\}$. By Lemma 3.5 and Lemma 3.2, for $a>a_0$ and $\epsilon>C\max\{a^{-(\gamma-1)},a^{-s}\}=Ca^{-s}$,
\[
\phi_{f_0}^s(\epsilon)\leq Da^d+C_4a^d\bigg(\log\frac{a}{\epsilon}\bigg)^{1+d}\leq K_1a^d\bigg(\log\frac{a}{\epsilon}\bigg)^{1+d}.
\]
Since $A^d$ has a Gamma prior, there exists $p,C_1,C_2>0$, such that $ C_1 a^p\exp(-D_2a^d)\leq g(a)\leq C_2 a^p\exp(-D_2a^d)$. Therefore by equation (\ref{eq:2}),
\begin{align*}
    P(||W^A-f_0||_{\infty}\leq 2\epsilon)& \geq P(||W^A-f_0||_{\infty}\leq 2\epsilon, A\in[(C/\epsilon)^{1/s}, 2(C/\epsilon)^{1/s}])\\
    &\geq \int_{(C/\epsilon)^{1/s}}^{2(C/\epsilon)^{1/s}}e^{-\phi_{f_0}^s(\epsilon)}g(a)da\\
    &\geq C_1e^{-K_2(1/\epsilon)^{d/s}(\log(1/\epsilon))^{1+d}}\bigg(\frac{C}{\epsilon}\bigg)^{p/s}\bigg(\frac{C}{\epsilon}\bigg)^{1/s}.
\end{align*}
Therefore,
\[
P(||W^A-f_0||_{\infty}\leq \epsilon_n)\geq \exp(-n\epsilon_n^2),
 \]
 for $\epsilon_n$ a large multiple of $n^{-s/(2s+d)}(\log n)^{\kappa_1}$ with $\kappa_1=(1+d)/(2+d/s)$ and sufficiently large $n$.

Similar to the proof of Theorem 3.1 of \cite{Van2009}, by Lemma 3.6,
\[
B_{M,r,\delta,\epsilon}=\bigg(M\sqrt{\frac{r}{\delta}}\tilde{\mathbb{H}}^{r}_1+\epsilon\mathbb{B}_1\bigg)\cup
\bigg(\bigcup_{a<\delta}(M\tilde{\mathbb{H}}^{a}_1)+\epsilon\mathbb{B}_1\bigg),
\]
with $\mathbb{B}_1$ the unit ball of $C(\mathcal{M})$, contains the set $M\tilde{\mathbb{H}}^{a}_1+\epsilon\mathbb{B}_1$ for any $a\in[\delta,r]$. Furthermore, if
\begin{equation}\label{eq:4}
M\geq 4\sqrt{\phi_0^r(\epsilon)}\ \ \text{ and }\ \ e^{-\phi_0^r(\epsilon)}<1/4,
\end{equation}
then
\begin{equation}\label{eq:3}
P(W^A\notin B)\leq \frac{2C_2r^{p-d+1}e^{-D_2r^d}}{D_2d}+e^{-M^2/8}.
\end{equation}
By Lemma 4.5, equation (\ref{eq:4}) is satisfied if
\[
M^2\geq 16C_4r^d(\log(r/\epsilon))^{1+d},\quad r>1,\quad \epsilon<\epsilon_1,
\]
for some fixed $\epsilon_1>0$.
Therefore
\[
P(W^A\notin B)\leq \exp(-C_0n\epsilon^2_n),
\]
for $r$ and $M$ satisfying
\begin{equation}\label{eq:6}
r^d=\frac{2C_0}{D_2}n\epsilon_n^2,\quad M^2=\max\{8C_0,16C_4\}n\epsilon_n^2(\log(r/\epsilon_n))^{1+d}.
\end{equation}
Denote the solution of the above equation as $r_n$ and $M_n$.

By Lemma 3.4, for $M\sqrt{r/\delta}>2\epsilon$ and $r>a_0$,
\begin{align*}
\log N\bigg(2\epsilon,M\sqrt{\frac{r}{\delta}}\tilde{\mathbb{H}}^{r}_1+\epsilon\tilde{\mathbb{B}}_1,||\cdot||_{\infty}\bigg)&\leq
\log N\bigg(\epsilon,M\sqrt{\frac{r}{\delta}}\tilde{\mathbb{H}}^{r}_1,||\cdot||_{\infty}\bigg)\\
&\leq Kr^d\bigg(\log\bigg(\frac{M\sqrt{r/\delta}}{\epsilon}\bigg)\bigg)^{1+d}.
\end{align*}
By Lemma 3.7, every element of $M\tilde{\mathbb{H}}^{a}_1$ for $a<\delta$ is uniformly at most $\delta\sqrt{D}\tau M$ distant from a constant function for a constant in the interval $[-M,M]$. Therefore for $\epsilon>\delta\sqrt{D}\tau M$,
\[
\log N\bigg(3\epsilon,\bigcup_{a<\delta}(M\tilde{\mathbb{H}}^{a}_1)+\epsilon\tilde{\mathbb{B}}_1,||\cdot||_{\infty}\bigg)
\leq N(\epsilon,[-M,M],|\cdot|)\leq\frac{2M}{\epsilon}.
\]
With $\delta=\epsilon/(2\sqrt{D}\tau M)$, combining the above displays, for $B=B_{M,r,\delta,\epsilon}$ with
\[
{M}\geq\epsilon,\ {M^{3/2}\sqrt{2\tau r}D^{1/4}}\geq2{\epsilon^{3/2}},\  r>a_0,
\]
which is satisfied when $r=r_n$ and $M=M_n$,
we have
\begin{equation}\label{eq:5}
\begin{aligned}
\log N\big(3\epsilon,B,||\cdot||_{\infty}\big)&\leq Kr^d\bigg(\log\bigg(\frac{M^{3/2}\sqrt{2\tau r}D^{1/4}}{\epsilon^{3/2}}\bigg)\bigg)^{1+d}
+\log\frac{2M}{\epsilon}.
\end{aligned}
\end{equation}
Therefore, for $r=r_n$, $M=M_n$ and $B_n=B_{M_n,r_n,\delta_n,\epsilon_n}$,
\[
\log N\big(3\bar{\epsilon}_n,B_n,||\cdot||_{\infty}\big)\leq n\bar{\epsilon}_n^2,
\]
for $\bar{\epsilon}_n$ a large multiple of $\epsilon_n(\log n)^{\kappa_2}$ with $\kappa_2=(1+d)/2$.

\subsection{Proof of Corollary 2.2}
Under $d'$, the prior concentration inequality becomes:
\begin{align}
    P(||W^A-f_0||_{\infty}\leq 2\epsilon)& \geq P(||W^A-f_0||_{\infty}\leq 2\epsilon, A\in[(C/\epsilon)^{1/s}, 2(C/\epsilon)^{1/s}])\nonumber\\
    &\geq \int_{(C/\epsilon)^{1/s}}^{2(C/\epsilon)^{1/s}}e^{-\phi_{f_0}^s(\epsilon)}g(a)da\nonumber\\
    &\geq C_1e^{-K_2(1/\epsilon)^{d\vee d'/s}(\log(1/\epsilon))^{1+d}}\bigg(\frac{C}{\epsilon}\bigg)^{p/s}\bigg(\frac{C}{\epsilon}\bigg)^{1/s}.
    \label{eq:pc2}
\end{align}

The complementary probability becomes:
\begin{align}
    P(W^A\notin B)\leq \frac{2C_2r^{p-d'+1}e^{-D_2r^{d'}}}{D_2}+e^{-M^2/8},\label{eq:cp2}
\end{align}
with $M^2\geq 16C_4r^d(\log(r/\epsilon))^{1+d}$, $r>1$ and $\epsilon<\epsilon_1$, where $\epsilon_1>0$ is a fixed constant.

An upper bound for the covering entropy is given by
\begin{equation}\label{eq:logn}
\begin{aligned}
\log N\big(3\epsilon,B,||\cdot||_{\infty}\big)&\leq Kr^d\bigg(\log\bigg(\frac{M^{3/2}\sqrt{2\tau r}D^{1/4}}{\epsilon^{3/2}}\bigg)\bigg)^{1+d}
+\log\frac{2M}{\epsilon}.
\end{aligned}
\end{equation}

\emph{1. $d'>d$:} With $\epsilon_n$ a multiple of $n^{-s/(2s+d')}(\log n)^{\kappa_1}$ with $\kappa_1=(1+d)/(2+d'/s)$, $\bar{\epsilon}_n<\epsilon_n$, $$r^{d'}=\frac{2C_0}{D_2}n\epsilon_n^2,\text{ and } M^2=\max\{8C_0,16C_4\}n\epsilon_n^2(\log(r/\epsilon_n))^{1+d},$$ inequalities \eqref{eq:pc2}, \eqref{eq:cp2} and \eqref{eq:logn} becomes \eqref{eq:0c}.
Therefore we arrive at the conclusion that under $d'>d$, the posterior contraction rate will be at least a multiple of $n^{-s/(2s+d')}(\log n)^{\kappa}$ with $\kappa=(1+d)/(2+d'/s)$.

\emph{2. $\frac{d^2}{2s+d}<d'<d$:} With $\epsilon_n$ a multiple of $n^{-s/(2s+d)}(\log n)^{\kappa_1}$ with $\kappa_1=(1+d)/(2+d/s)$, $\bar{\epsilon}_n$ a multiple of $n^{d/(2d')-1}\epsilon_n^{d/d'}(\log n)^{(d+1)/2}=n^{-\frac{(2s+d)d'-d^2}{2(2s+d)d'}}(\log n)^{\kappa_2}$ with $\kappa_2=(d+d^2)/(2d'+dd'/s)+(1+d)/2$,
 $$r^{d'}=\frac{2C_0}{D_2}n\epsilon_n^2,\text{ and } M^2=\max\{8C_0,16C_4\}n\epsilon_n^2(\log(r/\epsilon_n))^{1+d},$$ inequalities \eqref{eq:pc2}, \eqref{eq:cp2} and \eqref{eq:logn} becomes \eqref{eq:0c}.
Therefore we arrive at the conclusion that under $d'<d$, the posterior contraction rate will be at least a multiple of $n^{-\frac{(2s+d)d'-d^2}{2(2s+d)d'}}(\log n)^{\kappa}$ with $\kappa=(d+d^2)/(2d'+dd'/s)+(1+d)/2$. To make this rate meaningful, we need $(2s+d)d'-d^2>0$, i.e. $d'>d^2/(2s+d)$.

\bibliography{draft}

\begin{thebibliography}{41}
% BibTex style file: imsart-number.bst, 2010-01-14
% Default style options (sort=1,type=number).
% Used options (sort=1,type=number).

\bibitem{Aronszajn1950}
\begin{barticle}[author]
\bauthor{\bsnm{Aronszajn},~\bfnm{N.}\binits{N.}}
(\byear{1950}).
\btitle{Theory of reproducing kernels}.
\bjournal{Transactions of the American Mathematical Society}
\bvolume{68}
\bpages{337-404}.
\end{barticle}
\endbibitem

\bibitem{Belkin2003}
\begin{barticle}[author]
\bauthor{\bsnm{Belkin},~\bfnm{M.}\binits{M.}}
(\byear{2003}).
\btitle{Laplacian eigenmaps for dimensionality reduction and data
  representation}.
\bjournal{Neural Computation}
\bvolume{15}
\bpages{1373-1396}.
\end{barticle}
\endbibitem

\bibitem{Anirban2011}
\begin{barticle}[author]
\bauthor{\bsnm{Bhattacharya},~\bfnm{A.}\binits{A.}},
  \bauthor{\bsnm{Pati},~\bfnm{D.}\binits{D.}} \AND
  \bauthor{\bsnm{Dunson},~\bfnm{B.~D.}\binits{B.~D.}}
\btitle{Adaptive dimension reduction with a {G}aussian process prior}.
\bjournal{arXiv: 1111.1044}.
\end{barticle}
\endbibitem

\bibitem{Bickel2007}
\begin{barticle}[author]
\bauthor{\bsnm{Bickel},~\bfnm{J.~P.}\binits{J.~P.}} \AND
  \bauthor{\bsnm{Li},~\bfnm{B.}\binits{B.}}
(\byear{2007}).
\btitle{Local polynomial regression on unknown manifolds}.
\bjournal{Complex datasets and inverse problem: tomography, networks and
  beyond, {IMS} Lecture Notes-{M}onograph Series,}
\bvolume{54}
\bpages{177-186}.
\end{barticle}
\endbibitem

\bibitem{Binev2007}
\begin{barticle}[author]
\bauthor{\bsnm{Binev},~\bfnm{P.}\binits{P.}},
  \bauthor{\bsnm{Cohen},~\bfnm{A.}\binits{A.}},
  \bauthor{\bsnm{Dahmen},~\bfnm{W.}\binits{W.}} \AND \bauthor{\bsnm{DeVore},~}
(\byear{2007}).
\btitle{Constructive Approximation}.
\bvolume{26}
\bpages{127-152}.
\end{barticle}
\endbibitem

\bibitem{Binev2005}
\begin{barticle}[author]
\bauthor{\bsnm{Binev},~\bfnm{P.}\binits{P.}},
  \bauthor{\bsnm{Cohen},~\bfnm{A.}\binits{A.}},
  \bauthor{\bsnm{Dahmen},~\bfnm{W.}\binits{W.}},
  \bauthor{\bsnm{DeVore},~\bfnm{R.}\binits{R.}} \AND
  \bauthor{\bsnm{Temlyakov},~\bfnm{V.}\binits{V.}}
(\byear{2005}).
\btitle{Universal algorithms for learning theory part I: piecewise constant
  functions}.
\bjournal{Journal of Machine Learning Research}
\bvolume{6}
\bpages{1297-1321}.
\end{barticle}
\endbibitem

\bibitem{Camastra2002}
\begin{barticle}[author]
\bauthor{\bsnm{Camastra},~\bfnm{F.}\binits{F.}} \AND
  \bauthor{\bsnm{Vinviarelli},~\bfnm{A.}\binits{A.}}
(\byear{2002}).
\btitle{Estimating the intrinsic dimension of data with a fractal-based
  method}.
\bjournal{IEEE P.A.M.I.}
\bvolume{24}
\bpages{1404-1407}.
\end{barticle}
\endbibitem

\bibitem{Carter2010}
\begin{barticle}[author]
\bauthor{\bsnm{Carter},~\bfnm{K.~M.}\binits{K.~M.}},
  \bauthor{\bsnm{Raich},~\bfnm{R.}\binits{R.}} \AND
  \bauthor{\bsnm{Hero},~\bfnm{A.~O.}\binits{A.~O.}}
(\byear{2010}).
\btitle{On local intrinsic dimension estimation and its applications}.
\bjournal{Trans. Sig. Proc.}
\bvolume{58}
\bpages{650-663}.
\end{barticle}
\endbibitem

\bibitem{Castillo2013}
\begin{barticle}[author]
\bauthor{\bsnm{Castillo},~\bfnm{I.}\binits{I.}},
  \bauthor{\bsnm{Kerkyacharian},~\bfnm{G.}\binits{G.}} \AND
  \bauthor{\bsnm{Picard},~\bfnm{D.}\binits{D.}}
(\byear{2013}).
\btitle{Thomas {B}ayes¡¯ walk on manifolds}.
\bjournal{Probability Theory and Related Fields}.
\end{barticle}
\endbibitem

\bibitem{Chen2010}
\begin{barticle}[author]
\bauthor{\bsnm{Chen},~\bfnm{M.}\binits{M.}},
  \bauthor{\bsnm{Silva},~\bfnm{J.}\binits{J.}},
  \bauthor{\bsnm{Paisley},~\bfnm{J.}\binits{J.}},
  \bauthor{\bsnm{Wang},~\bfnm{C.}\binits{C.}},
  \bauthor{\bsnm{Dunson},~\bfnm{D.~B.}\binits{D.~B.}} \AND
  \bauthor{\bsnm{Carin},~\bfnm{L.}\binits{L.}}
(\byear{2010}).
\btitle{Compressive sensing on manifolds using a nonparametric mixture of
  factor analyzers: {A}lgorithm and performance bounds}.
\bjournal{IEEE Trans. Signal Process}
\bvolume{58}
\bpages{6140-6155}.
\end{barticle}
\endbibitem

\bibitem{doCarmo1992}
\begin{bbook}[author]
\bauthor{\bparticle{do} \bsnm{Carmo},~\bfnm{M.}\binits{M.}}
(\byear{1992}).
\btitle{Riemannian geometry}.
\bpublisher{Birkhauser, Boston}.
\end{bbook}
\endbibitem

\bibitem{Farahmand07}
\begin{binproceedings}[author]
\bauthor{\bsnm{Farahmand},~\bfnm{A.~M.}\binits{A.~M.}} \AND
  \bauthor{\bsnm{Szepesv\'{a}i},~\bfnm{C.}\binits{C.}}
(\byear{2007}).
\btitle{Manifold-adaptive dimension estimation}.
In \bbooktitle{ICML 2007.}
\bpages{265-272}.
\bpublisher{ACM Press}.
\end{binproceedings}
\endbibitem

\bibitem{Ghosal2000}
\begin{barticle}[author]
\bauthor{\bsnm{Ghosal},~\bfnm{S.}\binits{S.}},
  \bauthor{\bsnm{Ghosh},~\bfnm{J.~K.}\binits{J.~K.}} \AND \bauthor{\bsnm{Van
  Der~Vaart},~\bfnm{A.~W.}\binits{A.~W.}}
(\byear{2000}).
\btitle{Convergence rates of posterior distributions}.
\bjournal{Ann. Statist.}
\bvolume{28}
\bpages{500-531}.
\end{barticle}
\endbibitem

\bibitem{Ghosal2007}
\begin{barticle}[author]
\bauthor{\bsnm{Ghosal},~\bfnm{S.}\binits{S.}} \AND \bauthor{\bparticle{Van~der}
  \bsnm{Vaart},~\bfnm{A.~W.}\binits{A.~W.}}
(\byear{2007}).
\btitle{Convergence rates of posterior distributions for noniid observations}.
\bjournal{Ann. Statist.}
\bvolume{35}
\bpages{192-233}.
\end{barticle}
\endbibitem

\bibitem{Gine2005}
\begin{barticle}[author]
\bauthor{\bsnm{Gine},~\bfnm{E.}\binits{E.}} \AND
  \bauthor{\bsnm{Koltchinskii},~\bfnm{V.}\binits{V.}}
(\byear{2005}).
\btitle{Empirical graph {L}aplacian approximation of {L}aplace {B}eltrami
  operators: {L}arge sample results}.
\bjournal{High Dimensional Probability IV}
\bvolume{51}
\bpages{238-259}.
\end{barticle}
\endbibitem

\bibitem{Hiai2009}
\begin{barticle}[author]
\bauthor{\bsnm{Hiai},~\bfnm{F.}\binits{F.}} \AND
  \bauthor{\bsnm{Petz},~\bfnm{D.}\binits{D.}}
(\byear{2009}).
\btitle{Riemannian metrics on positive definite matrices related to means}.
\bjournal{Linear Algebra and its Applications}
\bvolume{430}
\bpages{3105-3130}.
\end{barticle}
\endbibitem

\bibitem{Kpotufe2009}
\begin{binproceedings}[author]
\bauthor{\bsnm{Kpotufe},~\bfnm{S.}\binits{S.}}
(\byear{2009}).
\btitle{Escaping the curse of dimensionality with a tree-based regressor}.
In \bbooktitle{Conference on Computational Learning Theory}.
\end{binproceedings}
\endbibitem

\bibitem{kpotufe2012}
\begin{barticle}[author]
\bauthor{\bsnm{Kpotufe},~\bfnm{S.}\binits{S.}} \AND
  \bauthor{\bsnm{Dasgupta},~\bfnm{S.}\binits{S.}}
(\byear{2012}).
\btitle{A tree-based regressor that adapts to intrinsic dimension}.
\bjournal{Journal of Computer and System Sciences}
\bvolume{78}
\bpages{1496-1515}.
\end{barticle}
\endbibitem

\bibitem{Kuelbs1994}
\begin{barticle}[author]
\bauthor{\bsnm{Kuelbs},~\bfnm{W.~V.}\binits{W.~V.} \bsuffix{J.~abd~Li}} \AND
  \bauthor{\bsnm{Linde},~\bfnm{W.}\binits{W.}}
(\byear{1994}).
\btitle{The {G}aussian measure of shifted balls}.
\bjournal{Probab. Theory Related Fields}
\bvolume{98}
\bpages{143-162}.
\end{barticle}
\endbibitem

\bibitem{Kundu2011}
\begin{barticle}[author]
\bauthor{\bsnm{Kundu},~\bfnm{S.}\binits{S.}} \AND
  \bauthor{\bsnm{Dunson},~\bfnm{D.~B.}\binits{D.~B.}}
(\byear{2011}).
\btitle{Latent factor models for density estimation}.
\bjournal{arXiv:1108.2720v2}.
\end{barticle}
\endbibitem

\bibitem{Lawrence2003}
\begin{barticle}[author]
\bauthor{\bsnm{Lawrence},~\bfnm{N.~D.}\binits{N.~D.}}
(\byear{2003}).
\btitle{Gaussian process latent variable models for visualisation of high
  dimensional data}.
\bjournal{Neural Information Processing Systems}.
\end{barticle}
\endbibitem

\bibitem{Levina2004}
\begin{binproceedings}[author]
\bauthor{\bsnm{Levina},~\bfnm{E.}\binits{E.}} \AND
  \bauthor{\bsnm{Bickel},~\bfnm{P.}\binits{P.}}
(\byear{2004}).
\btitle{Maximun likelihood estimation of intrinsic dimension}
In \bbooktitle{Advances in Neural Information Processing Systems}
\bvolume{17}.
\bpublisher{The MIT Press}, \baddress{Cambridge, MA, USA}.
\end{binproceedings}
\endbibitem

\bibitem{Lin2014}
\begin{barticle}[author]
\bauthor{\bsnm{Lin},~\bfnm{L.}\binits{L.}} \AND
  \bauthor{\bsnm{Dunson},~\bfnm{D.~B.}\binits{D.~B.}}
\btitle{Bayesian monotone regression using {G}aussian process projection}.
\bjournal{Biometrika, online}.
\end{barticle}
\endbibitem

\bibitem{Little2009}
\begin{binproceedings}[author]
\bauthor{\bsnm{Little},~\bfnm{A.~V.}\binits{A.~V.}},
  \bauthor{\bsnm{Lee},~\bfnm{J.}\binits{J.}},
  \bauthor{\bsnm{Jung},~\bfnm{Y.~M.}\binits{Y.~M.}} \AND
  \bauthor{\bsnm{Maggioni},~\bfnm{M.}\binits{M.}}
(\byear{2009}).
\btitle{Estimation of intrinsic dimensionality of samples from noisy
  low-dimensional manifolds in high dimensions with multiscale {SVD}}
In \bbooktitle{2009 IEEE/SP 15th Workshop on Statistical Signal Processing}
\bpages{85-88}.
\end{binproceedings}
\endbibitem

\bibitem{Moakher2011}
\begin{barticle}[author]
\bauthor{\bsnm{Moakher},~\bfnm{M.}\binits{M.}} \AND
  \bauthor{\bsnm{Z\'{e}ra\"{i}},~\bfnm{M.}\binits{M.}}
(\byear{2011}).
\btitle{The {R}iemannian geometry of the space of positive-definite matrices
  and its application to the regularization of positive-definite matrix-valued
  data}.
\bjournal{Journal of Mathematical Imaging and Vision}
\bvolume{40}
\bpages{171-187}.
\end{barticle}
\endbibitem

\bibitem{Nash1956}
\begin{barticle}[author]
\bauthor{\bsnm{Nash},~\bfnm{J.}\binits{J.}}
(\byear{1956}).
\btitle{The imbedding problem for {R}iemannian manifolds}.
\bjournal{Annals of Mathematics}
\bvolume{63}
\bpages{20-63}.
\end{barticle}
\endbibitem

\bibitem{Nene1996}
\begin{btechreport}[author]
\bauthor{\bsnm{Nene},~\bfnm{S.~A.}\binits{S.~A.}},
  \bauthor{\bsnm{Nayar},~\bfnm{S.~K.}\binits{S.~K.}} \AND
  \bauthor{\bsnm{Murase},~\bfnm{H.}\binits{H.}}
(\byear{1996}).
\btitle{Columbia object image library ({COIL}-100)}
\btype{Technical Report},
\binstitution{Columbia University}.
\end{btechreport}
\endbibitem

\bibitem{Page2013}
\begin{barticle}[author]
\bauthor{\bsnm{Page},~\bfnm{G.}\binits{G.}},
  \bauthor{\bsnm{Bhattacharya},~\bfnm{A.}\binits{A.}} \AND
  \bauthor{\bsnm{Dunson},~\bfnm{D.~B.}\binits{D.~B.}}
(\byear{2013}).
\btitle{Classification via {B}ayesian nonparametric learning of affine
  subspaces}.
\bjournal{J. Amer. Statist. Assoc.}
\bvolume{108}
\bpages{187-201}.
\end{barticle}
\endbibitem

\bibitem{Reich2011}
\begin{barticle}[author]
\bauthor{\bsnm{Reich},~\bfnm{B.~J.}\binits{B.~J.}},
  \bauthor{\bsnm{Bondell},~\bfnm{H.~D.}\binits{H.~D.}} \AND
  \bauthor{\bsnm{Li},~\bfnm{L.~X.}\binits{L.~X.}}
(\byear{2011}).
\btitle{Sufficient dimension reduction via {B}ayesian mixture modeling}.
\bjournal{Biometrics}
\bvolume{67}
\bpages{886-895}.
\end{barticle}
\endbibitem

\bibitem{Roweis2000}
\begin{barticle}[author]
\bauthor{\bsnm{Roweis},~\bfnm{S.~T.}\binits{S.~T.}} \AND
  \bauthor{\bsnm{Saul},~\bfnm{L.~K.}\binits{L.~K.}}
(\byear{2000}).
\btitle{Nonlinear dimensionality reduction by locally linear embedding}.
\bjournal{Science}
\bvolume{290}
\bpages{2323-2326}.
\end{barticle}
\endbibitem

\bibitem{Savitsky2011}
\begin{barticle}[author]
\bauthor{\bsnm{Savitsky},~\bfnm{T.}\binits{T.}},
  \bauthor{\bsnm{Vannucci},~\bfnm{M.}\binits{M.}} \AND
  \bauthor{\bsnm{Sha},~\bfnm{N.}\binits{N.}}
(\byear{2011}).
\btitle{Variable selection for nonparametric {G}aussian process priors: models
  and computational strategies}.
\bjournal{Statistical Science}
\bvolume{26}
\bpages{130-149}.
\end{barticle}
\endbibitem

\bibitem{Stone1982}
\begin{barticle}[author]
\bauthor{\bsnm{Stone},~\bfnm{C.~J.}\binits{C.~J.}}
(\byear{1982}).
\btitle{Optimal global rates of convergence for nonparametric regression}.
\bjournal{Ann. Statist.}
\bvolume{10}
\bpages{1040-1053}.
\end{barticle}
\endbibitem

\bibitem{Tenenbaum2000}
\begin{barticle}[author]
\bauthor{\bsnm{Tenenbaum},~\bfnm{J.~B.}\binits{J.~B.}},
  \bauthor{\bsnm{Silva},~\bfnm{V.}\binits{V.}} \AND
  \bauthor{\bsnm{Langford},~\bfnm{J.~C.}\binits{J.~C.}}
(\byear{2000}).
\btitle{A global geometric framework for nonlinear dimensionality reduction}.
\bjournal{Science}
\bvolume{290}
\bpages{2319-2323}.
\end{barticle}
\endbibitem

\bibitem{Tibshirani1996}
\begin{barticle}[author]
\bauthor{\bsnm{Tibshirani},~\bfnm{R.}\binits{R.}}
(\byear{1996}).
\btitle{Regression shrinkage and selection via the lasso}.
\bjournal{J. R. Stat. Soc. Ser. B Stat. Methodol.}
\bvolume{73}
\bpages{273-282}.
\end{barticle}
\endbibitem

\bibitem{Tokdar2010}
\begin{barticle}[author]
\bauthor{\bsnm{Tokdar},~\bfnm{S.~T.}\binits{S.~T.}},
  \bauthor{\bsnm{Zhu},~\bfnm{Y.~M.}\binits{Y.~M.}} \AND
  \bauthor{\bsnm{Ghosh},~\bfnm{J.~K.}\binits{J.~K.}}
(\byear{2010}).
\btitle{Bayesian density regression with logistic {G}aussian process and
  subspace projection}.
\bjournal{Bayesian Anal.}
\bvolume{5}
\bpages{319-344}.
\end{barticle}
\endbibitem

\bibitem{geer2000}
\begin{bbook}[author]
\bauthor{\bparticle{van~de} \bsnm{Geer},~\bfnm{S.}\binits{S.}}
(\byear{2000}).
\btitle{Empirical processes in {M}-estimation}.
\bpublisher{Cambridge University Press}, \baddress{Cambridge, UK}.
\end{bbook}
\endbibitem

\bibitem{Van2008b}
\begin{barticle}[author]
\bauthor{\bparticle{van~der} \bsnm{Vaart},~\bfnm{A.~W.}\binits{A.~W.}} \AND
  \bauthor{\bparticle{van} \bsnm{Zanten},~\bfnm{J.~H.}\binits{J.~H.}}
(\byear{2008}).
\btitle{Reproducing kernel {H}ilbert spaces of {G}aussian priors}.
\bjournal{IMS Collections}
\bvolume{3}
\bpages{200-222}.
\end{barticle}
\endbibitem

\bibitem{Van2009}
\begin{barticle}[author]
\bauthor{\bparticle{van~der} \bsnm{Vaart},~\bfnm{A.~W.}\binits{A.~W.}} \AND
  \bauthor{\bparticle{van} \bsnm{Zanten},~\bfnm{J.~H.}\binits{J.~H.}}
(\byear{2009}).
\btitle{Adaptive {B}ayesian estimation using a {G}aussian random field with
  inverse Gamma bandwidth}.
\bjournal{Ann. Statist.}
\bvolume{37}
\bpages{2655-2675}.
\end{barticle}
\endbibitem

\bibitem{van2011}
\begin{barticle}[author]
\bauthor{\bparticle{van~der} \bsnm{Vaart},~\bfnm{A.~W.}\binits{A.~W.}} \AND
  \bauthor{\bparticle{van} \bsnm{Zanten},~\bfnm{J.~H.}\binits{J.~H.}}
(\byear{2011}).
\btitle{Information rates of nonparametric {G}aussian process methods}.
\bjournal{Journal of Machine Learning Research}
\bvolume{12}
\bpages{2095-2119}.
\end{barticle}
\endbibitem

\bibitem{Ye2008}
\begin{barticle}[author]
\bauthor{\bsnm{Ye},~\bfnm{G.}\binits{G.}} \AND
  \bauthor{\bsnm{Zhou},~\bfnm{D.}\binits{D.}}
(\byear{2008}).
\btitle{Learning and approximation by {G}aussians on {R}iemannian manifolds}.
\bjournal{Adv. Comput. Math.}
\bvolume{29}
\bpages{291-310}.
\end{barticle}
\endbibitem

\bibitem{Zou2005}
\begin{barticle}[author]
\bauthor{\bsnm{Zou},~\bfnm{H.}\binits{H.}} \AND
  \bauthor{\bsnm{Hastie},~\bfnm{T.}\binits{T.}}
(\byear{2005}).
\btitle{Regularization and variable selection via the elastic net}.
\bjournal{J. R. Stat. Soc. Ser. B Stat. Methodol.}
\bvolume{67}
\bpages{301-320}.
\end{barticle}
\endbibitem

\end{thebibliography}
\end{document}